\newtheorem{prop}{Proposition}
\newtheorem{cor}{Corollary}
\newcommand\bu {{\bf u}}
\newcommand{\lb}{\lambda_1}
\newcommand{\fvi}{\|f\|_{\infty}}
\newcommand{\om}{\omega}
\newcommand{\Real}{\mathbf{R}}
\newcommand{\dy}{\partial}
\newcommand{\gb}{\nabla}
\newcommand{\sgb}{\gb^\perp}
\newcommand{\bb}{b}
\newcommand{\vfi}{\varphi}
\newcommand\Hperz{\mathring{\mathrm{H}}_{\textrm{per}}}
\newcommand\Omt{\tilde\Omega}
\newcommand{\Lr}{\dot{L}}
\newcommand{\Hr}{\dot{H}}
\newcommand{\sfrac}[2]{{\textstyle\frac{#1}{#2}}}
\newcommand{\dtt}{\;\mathrm{d}t}
\newcommand{\omegaz}{\omega_0}
\newcommand{\Attr}{\mathcal{A}}
\def\DOT{\!\cdot\!}
\def\dt{{\Delta t}}
\def\u{\mbox{\boldmath $u$}}
\def\U{\mbox{\boldmath $U$}}
\def\f{\mbox{\boldmath $f$}}
\def\x{\mbox{\boldmath $x$}}
\def\pomega{\mbox{\boldmath $\omega$}}
\def\ppsi{\mbox{\boldmath $\psi$}}
\title{Long time stability of a classical efficient scheme for two dimensional Navier--Stokes equations}
\author{%
S.~Gottlieb\thanks{Department of Mathematics, University of Massachusetts Dartmouth, North Dartmouth, MA 02747 ({\tt sgottlieb@umassd.edu})}
\and
F.~Tone\thanks{Department of Mathematics and Statistics, University of West Florida, Pensacola, FL 32514 ({\tt ftone@uwf.edu})}
\and
C.~Wang\thanks{Department of Mathematics, University of Massachusetts Dartmouth, North Dartmouth, MA 02747 ({\tt cwang1@umassd.edu})}
\and
X.~Wang\thanks{Department of Mathematics, Florida State University, Tallahassee, FL 32306--4510\hfill\break
({\tt wxm@math.fsu.edu})}
\and
D.~Wirosoetisno\thanks{Department of Mathematical Sciences, Durham University, Durham\ \ DH1~3LE, United Kingdom ({\tt djoko.wirosoetisno@durham.ac.uk})}
}
\begin{document}

\maketitle

\begin{abstract}
We prove that a popular classical implicit-explicit scheme for the 2D incompressible Navier--Stokes equations that treats the viscous term implicitly while the nonlinear advection term explicitly is long time stable provided that the time step is sufficiently small in the case with periodic boundary conditions.
The long time stability in the $L^2$ and $H^1$ norms further leads to the convergence of the global attractors and invariant measures of the scheme to those of the NSE itself at vanishing time step. Both semi-discrete in time and fully discrete schemes with either Galerkin Fourier spectral or collocation Fourier spectral methods are considered.
\end{abstract}

\begin{keywords}
2d Navier--Stokes equations,
semi-implicit schemes,
global attractor,
invariant measures,
spectral and collocation
\end{keywords}

\begin{AMS}
65M12, 
65M70, 
76D06, 
37L40  
\end{AMS}

\pagestyle{myheadings}
\thispagestyle{plain}
\markboth{S.~GOTTLIEB ET AL.}{A CLASSICAL SCHEME FOR 2D NAVIER--STOKES}

\section{Introduction}

The celebrated Navier--Stokes system for homogeneous incompressible Newtonian fluids in the  vorticity--streamfunction formulation in two dimensions takes the form
\begin{equation}\label{NSE}\begin{aligned}
   \frac{\partial\omega}{\partial t}+\nabla^\perp\psi\cdot\nabla\omega -\nu\Delta \omega &= f,\\
   -\Delta\psi &= \omega,
\end{aligned}\end{equation}
where  $\omega$ denotes the vorticity, $\psi$ is the streamfunction, and $f$ represents (given) external forcing.  For simplicity we will assume periodic boundary condition, i.e., the domain is a two dimensional torus $\mathbb{T}^2$, and that all functions have mean zero over the torus.

It is well-known that two dimensional incompressible flows could be extremely complicated with possible chaos and turbulent behavior \cite{F1995, FMRT2001, MY1975, CF1988, MW2006, T1997}.
Although some of the features of this turbulent or chaotic behavior may be deduced via analytic means, it is widely believed that numerical methods are indispensable for obtaining a better understanding of these complicated phenomena.
For analytic forcing, it is known that the solution is analytic in space (in fact Gevrey class regular \cite{FT1989}), and hence Fourier spectral is the obvious choice for spatial discretization.
As for time discretization, one of the popular schemes \cite{CHQZ1988, Peyret2002} is the following semi-implicit algorithm, which treats the viscous term implicitly and the nonlinear advection term explicitly

\begin{equation}\label{scheme}
    \frac{\omega^{n+1}-\omega^n}{\dt} + \nabla^\perp\psi^n\cdot\nabla\omega^n - \nu\Delta\omega^{n+1} = f^n.
\end{equation}
ere $\dt$ is the time step, and $\omega^n, \omega^{n+1}$ are the approximations of the vorticity at  the discrete times $n\dt, (n+1)\dt$, respectively.  The convergence of this scheme on any fixed time interval is standard and well-known \cite{G1991a, G1991b, G1992, GS1998, TM1996}. There are many off-the-shelf efficient solvers of (\ref{scheme}), since it essentially reduces to a Poisson solver  at each time step.

   It is also well-known that the NSE (\ref{NSE}) is long time enstrophy stable in the sense that the enstrophy $\bigl(\sfrac12\|\omega\|_{L^2}^2\bigr)$ is bounded uniformly in time, and it possesses a global attractor $\Attr$ and invariant measures \cite{CF1988, FMRT2001, T1997}. In fact, it is the long time dynamics characterized by the global attractor and invariant measure that are central to the understanding of turbulence. Therefore a natural question  is if numerical schemes such as (\ref{scheme}) can capture the long time dynamics of the NSE (\ref{NSE}) in the sense of convergence of global attractors and invariant measures. To say the least, we would require that the scheme inherit the long time stability of the NSE.

   There is a long  list of works on time discretization of the NSE and related dissipative systems that preserve the dissipativity in various forms \cite{S1989, S1990, FJKT1991, FJKT1994, HS2000, SH1996, J2002, T2009,TW2006}. It has also been discovered recently that if the dissipativity of a dissipative system is preserved appropriately, then the numerical scheme would be able to capture the long time statistical property of the underlying dissipative system asymptotically, in the sense that the invariant measures of the scheme would converge to those of the continuous-in-time system \cite{W2010}. The main purpose of this manuscript is to show that the classical scheme (\ref{scheme}) is long time stable in $L^2$ and $H^1$, and that the global attractor as well as the invariant measures of the scheme, converge to those of the NSE at vanishing time step.

\section{Long time behavior of the semi-discrete scheme}

We first recall the well-known periodic Sobolev spaces on $\Omega=(0,2\pi)\times (0,2\pi)$ with average zero:
\begin{equation}
  \Hr^m_{per}(\Omega) := \left\{ \phi \in H^m(\Omega) \bigg| \int_\Omega \phi = 0 \textrm{ and }\phi\textrm{ is } 2\pi\text{-periodic in each direction} \right\}.
\end{equation}
$\Hr^{-m}_{per}$ is defined as the dual space of $ \Hr^m_{per}$ with the duality induced by the $L^2$ inner product.
The adoption of $ \Hr^m_{per}$ is well-known \cite{CF1988, T1983} since this space is invariant under the Navier--Stokes dynamics (\ref{NSE}), provided that the initial data and the forcing term belong to the same space.

\subsection{Long time stability of the scheme}
We first prove that the scheme \eqref{scheme} is stable for all time.

\medskip
\begin{lemma}\label{dynsyst}
The scheme \eqref{scheme} forms a dynamical system on $\Lr^2$.
\end{lemma}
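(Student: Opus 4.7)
The plan is to show that the one-step solution operator $S:\omega^n\mapsto\omega^{n+1}$ defined by \eqref{scheme} is well defined and locally Lipschitz from $\Lr^2$ into itself. I will construct $S$ explicitly by inverting the implicit part of \eqref{scheme}, with the nonlinear term handled in a low-regularity dual space, because a direct $H^1$-variational formulation is not available when $\omega^n$ is merely in $L^2$.

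Given $\omega^n\in\Lr^2$, the periodic Poisson equation $-\lap\psi^n=\omega^n$ has a unique mean-zero solution $\psi^n\in\Hr^2_{per}$, so that $\u^n:=\sgb\psi^n\in\Hr^1_{per}$ is divergence-free with $\|\u^n\|_{H^1}\le C\|\omega^n\|_{L^2}$. The two-dimensional Sobolev embedding $H^1\hookrightarrow L^4$ together with H\"older's inequality then gives $\u^n\omega^n\in L^{4/3}$ with $\|\u^n\omega^n\|_{L^{4/3}}\le C\|\omega^n\|_{L^2}^2$. Using $\gb\DOT\u^n=0$, I rewrite the advection in conservative form as $\sgb\psi^n\DOT\gb\omega^n=\gb\DOT(\u^n\omega^n)\in W^{-1,4/3}\subset\Hr^{-3/2}_{per}$, where the latter inclusion comes by duality from the 2D embedding $\Hr^{3/2}_{per}\hookrightarrow W^{1,4}$. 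Plugging into \eqref{scheme} and assuming $f^n\in\Lr^2$ yields the linear elliptic equation
\[
  (I-\nu\dt\lap)\,\omega^{n+1}=\omega^n+\dt\,f^n-\dt\,\gb\DOT(\u^n\omega^n).
\]
Since the Fourier multiplier $1+\nu\dt|k|^2$ makes $I-\nu\dt\lap$ an isomorphism that raises regularity by two orders, I split the right-hand side into the $\Lr^2$ piece $\omega^n+\dt f^n$ (which contributes an $\Hr^2_{per}$ piece to $\omega^{n+1}$) and the $\Hr^{-3/2}_{per}$ piece $-\dt\,\gb\DOT(\u^n\omega^n)$ (which contributes an $\Hr^{1/2}_{per}$ piece). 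Both contributions lie in $\Lr^2$, so $\omega^{n+1}\in\Lr^2$ is uniquely defined and has mean zero.

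Continuity of $S:\Lr^2\to\Lr^2$ will then follow from the boundedness of the implicit elliptic solver, together with the fact that the map $\omega^n\mapsto\u^n\omega^n$ is bilinear and hence locally Lipschitz from $\Lr^2$ into $L^{4/3}$. The one genuine technical point is that, for $\omega^n$ only in $L^2$, the advection $\gb\DOT(\u^n\omega^n)$ does not sit in $\Hr^{-1}_{per}$, so the standard $H^1$-variational setup is unavailable; the workaround is to absorb the nonlinearity in the larger dual space $\Hr^{-3/2}_{per}$ and to exploit that the implicit elliptic step still gains two full orders of smoothness.
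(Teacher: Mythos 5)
Your proposal is correct and follows essentially the same route as the paper: solve the Poisson problem to place $\psi^n$ in $\Hr^2_{per}$, put the advection term in a negative-order Sobolev space (your $\Hr^{-3/2}_{per}$ corresponds to the paper's $\Hr^{-1-\alpha}_{per}$ with $\alpha=\sfrac12$), and invert the operator $I-\nu\dt\lap$, whose two-derivative gain lands $\omega^{n+1}$ back in $\Lr^2$ with continuous dependence on the data. Your version is merely more explicit about the duality/embedding chain and the local Lipschitz continuity of the bilinear term, which the paper leaves implicit.
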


\medskip
\begin{proof}
It is easy to see that for $\omega^n \in \Lr^2$, we have $\psi^n \in \Hr^2_{per}$.
Hence $\nabla^\perp\psi^n\cdot\nabla\omega^n\in \Hr^{-1-\alpha}_{per}$ for all $\alpha\in (0,1)$.
Therefore, the classical scheme (\ref{scheme}), which can be viewed as a Poisson type problem
$\omega^{n+1}/\dt - \nu\Delta\omega^{n+1} =
f -\nabla^\perp\psi^n\cdot\nabla\omega^n  + \omega^n/\dt
\in \Hr^{-1-\alpha}_{per}$,
possesses a unique solution in $\Lr^2$ (in fact in $\Hr^1_{per}$)  and the solution depends continuously on the data.
Therefore it defines a (discrete) semi-group on $\Lr^2$.
\qquad\end{proof}

\medskip
  Now we derive the long time stability of the scheme (\ref{scheme}) both in $L^2$ and in $H^1$. Our proof relies on a Wente type estimate on the
nonlinear term (see Appendix~\ref{s:wente}), which may be of independent interest.

We first show that  the scheme (\ref{scheme})  is uniformly bounded
in $L^2$, provided that the time step is sufficiently small.
To this end,
we take the scalar product of (\ref{scheme}) with $2\dt\,\omega^{n+1}$
and using the relation
\begin{equation}
      2(\varphi - \psi, \varphi)_{L^2}=\|\varphi\|_2^2-\|\psi\|_2^2+\|\varphi-\psi\|_2^2,
\end{equation}
where $\|\cdot\|_2^{}$ denotes the $L^2$ norm, we obtain
\begin{equation}\label{1}\begin{aligned}
     \|\om^{n+1}\|_2^2 -\|\om^{n}\|_2^2 + \|\omega^{n+1} - \omega ^{n}\|_2^2
         + 2 \nu \dt \| \omega ^{n+1}\|^2_{H^1} &+ 2\dt\, \bb(\psi^n, \omega^n, \omega^{n+1})\\
	&= 2  \dt\, (f^n, \omega ^{n+1})_{L^2}
\end{aligned}\end{equation}
where
\begin{equation}
   b(\psi,\omega,\tilde\omega) := (\sgb\psi\cdot\gb\omega,\tilde\omega)_{L^2}^{}
	= -b(\psi,\tilde\omega,\omega),
\end{equation}
the last equality obtaining upon integration by parts.
Using the Cauchy--Schwarz and the Poincar\'e inequalities, we majorize the right-hand side of (\ref{1}) by
\begin{equation}\label{2}
      2\dt \|f^n\|_2 \|\omega ^{n+1}\|_2
       \leq 2c_0\dt \|f^n\|_2 \|\omega ^{n+1}\|_{H^1}
       \leq \nu \dt \|\omega ^{n+1}\|^2_{H^1} + \frac{c_0^2}{\nu }\dt \|f^n\|^2_2.
\end{equation}
Using the Wente type estimate (\ref{q:wente2}), we bound the nonlinear term as
\begin{equation}\label{3}\begin{aligned}
   2\dt\, \bb(\psi^n, \omega^n, \omega^{n+1})
	&= 2\dt\, \bb(\psi^n,  \omega^{n+1}, \omega^{n+1}-\omega^{n})\\
	&\le 2C_w \dt \|\nabla^\perp\psi^n\|_{H^1}  \|\omega^{n+1}\|_{H^1}\| \omega^n-\omega^{n+1}\|_2\\
	&\le \sfrac{1}{2} \|\omega^{n+1}-\omega^{n}\|^2_2+2C_w^2 \dt^2 \|\nabla^\perp\psi^n\|^2_{H^1}  \|\omega^{n+1}\|_{H^1}^2\\
	&\le \sfrac12 \|\omega^{n+1}-\omega^{n}\|_2^2+2C_w^2 \dt^2 \|\omega^n\|^2_2 \|\omega^{n+1}\|_{H^1}^2.
\end{aligned}\end{equation}
Relations (\ref{1})--(\ref{3}) imply
\begin{equation}\label{4}\begin{aligned}
     \|\om^{n+1}\|_2^2 -\|\om^{n}\|_2^2 + \sfrac{1}{2}  \|\omega^{n+1} - \omega ^{n}\|_2^2
         + (\nu -2C_w^2 \dt \|\omega^n\|_2^2) \dt\,&\| \omega ^{n+1}\|_{H^1}^2\\
	&\leq    \frac{c_0^2}{\nu}\dt \|f^n\|_2^2.
\end{aligned}\end{equation}
Here and in what follows, $C$ and $c$ denote generic constants whose value
may not be the same each time they appear.
Numbered constants, e.g., $c_{42}^{}$, have fixed values.

We are now able to prove the following:

\medskip
\begin{lemma}\label{t:bdh}
 Let $\omega_0 \in \Lr^2$ and let $\omega^n$ be the solution of the numerical scheme $($\ref{scheme}$)$. Also, let  $f \in L^\infty(\Real_+;H)$ and set $\|f\|_{\infty} := \|f\|_{L^\infty(\Real_+;H)}$.
  Then there exists $M_0 = M_0(\|\omega_0\|_2, \nu, \|f\|_{\infty})$ such that if
\begin{equation}\label{5a}
  \dt \leq \frac{\nu}{4C_w^2M_0^2},
\end{equation}
then
\begin{equation}\label{q:bdinh}
   \|\omega^{n}\|_2 \leq M_0, \quad \forall \, n \geq 0,
\end{equation}
\begin{equation}\label{q:bdv}
   \|\omega^{n}\|_2^2 \le \left(1+  \frac{\nu}{2c_0^2} \dt \right)^{-n} \|\omega_0\|_2^2
           +  \frac{2c_0^4}{ \nu^2} \fvi^2 \left[ 1 - \left( 1 + \frac{\nu}{2c_0^2} \dt \right)^{-n} \right],
    \>\forall\, n\ge0,
\end{equation}
and
 \begin{eqnarray}\label{6}
     \frac{ \nu}{2} \dt \sum_{n=i}^{m} \|\omega^{n}\|_{H^1}^2
     \leq \|\omega^{i-1}\|_2^2 + \frac{c_0^2}{\nu} \fvi^2 (m-i+1) \dt, \,
             \quad \forall \, i=1, \cdots, m.
\end{eqnarray}\end{lemma}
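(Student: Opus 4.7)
The starting point is the energy inequality (\ref{4}), which already packages everything we need into a single recursion. The plan has three movements: close the a priori bound by induction, iterate the resulting scalar recurrence, and finally sum to get the $H^1$ estimate.

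\textbf{Step 1 (choice of $M_0$ and induction setup).} Define $M_0$ by $M_0^2 := \|\omega_0\|_2^2 + 2c_0^4\fvi^2/\nu^2$; this is exactly the sup of the right-hand side of the target bound (\ref{q:bdv}). The constraint (\ref{5a}) then forces $\nu - 2C_w^2\dt\,\|\omega^n\|_2^2 \ge \nu/2$ whenever $\|\omega^n\|_2 \le M_0$. So, assuming $\|\omega^n\|_2 \le M_0$ as induction hypothesis, (\ref{4}) yields
\begin{equation}\label{pl1}
   \|\omega^{n+1}\|_2^2 - \|\omega^n\|_2^2 + \tfrac12\|\omega^{n+1}-\omega^n\|_2^2 + \tfrac{\nu}{2}\dt\,\|\omega^{n+1}\|_{H^1}^2 \le \tfrac{c_0^2}{\nu}\dt\,\fvi^2.
\end{equation}

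\textbf{Step 2 (scalar recurrence).} Apply the Poincar\'e inequality $\|\omega^{n+1}\|_2 \le c_0\|\omega^{n+1}\|_{H^1}$ to the dissipation term in (\ref{pl1}) and drop the nonnegative jump term to obtain
\begin{equation*}
   \bigl(1 + \tfrac{\nu}{2c_0^2}\dt\bigr)\|\omega^{n+1}\|_2^2 \le \|\omega^n\|_2^2 + \tfrac{c_0^2}{\nu}\dt\,\fvi^2.
\end{equation*}
Write $\alpha := \nu\dt/(2c_0^2)$ and iterate this linear recurrence from $n=0$; summing the resulting geometric series $\sum_{k=1}^{n}(1+\alpha)^{-k} = \alpha^{-1}[1-(1+\alpha)^{-n}]$ gives exactly (\ref{q:bdv}). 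The right-hand side of (\ref{q:bdv}) is a convex combination of $\|\omega_0\|_2^2$ and $2c_0^4\fvi^2/\nu^2$, both $\le M_0^2$, so $\|\omega^{n+1}\|_2 \le M_0$, which closes the induction and establishes (\ref{q:bdinh}).

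\textbf{Step 3 ($H^1$ time-average bound).} With the a priori bound in hand, the coefficient $(\nu - 2C_w^2\dt\,\|\omega^n\|_2^2)$ is again $\ge \nu/2$ for all $n$, so (\ref{pl1}) holds for every $n$. Summing (\ref{pl1}) over $n = i-1, i, \ldots, m-1$ (relabelling the dissipation index as $n = i, \ldots, m$) telescopes the first two terms to $\|\omega^m\|_2^2 - \|\omega^{i-1}\|_2^2$, and dropping this together with the jump terms yields (\ref{6}).

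\textbf{Main obstacle.} The only delicate point is the apparent circularity: the smallness condition (\ref{5a}) involves $M_0$, and $M_0$ is supposed to be an a priori bound whose existence we are still proving. The resolution is the explicit closed-form choice of $M_0$ in Step~1, which is dictated by the long-time steady-state value in (\ref{q:bdv}) and makes the induction hypothesis exactly preserved. Once $M_0$ is fixed this way and the constraint (\ref{5a}) is imposed, the nonlinear term in (\ref{3}) becomes uniformly subcritical relative to the viscous term for all $n$, and both (\ref{q:bdv}) and (\ref{6}) follow by essentially scalar manipulations.
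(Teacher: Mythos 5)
Your proposal is correct and follows essentially the same route as the paper: the same choice $M_0^2=\|\omega_0\|_2^2+2c_0^4\fvi^2/\nu^2$, an induction that uses \eqref{5a} to absorb the nonlinear term into half the viscous dissipation, the Poincar\'e-based scalar recurrence with $\alpha=1+\nu\dt/(2c_0^2)$ iterated into a geometric series to give \eqref{q:bdv} and close the induction, and a telescoping sum of the resulting one-step inequality for \eqref{6}. The only cosmetic difference is that the paper phrases the induction hypothesis as \eqref{q:bdv} for $n=0,\dots,m$ rather than as the bound $\|\omega^n\|_2\le M_0$ itself, but the content is identical.
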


\begin{proof}
We will first prove (\ref{q:bdv}) by induction on $n$.
It is clear that (\ref{q:bdv}) holds for  $n=0$.
Assuming that (\ref{q:bdv})
holds for $n=0, \cdots, m$, we then have that  (\ref{q:bdinh}) holds
for $n=0, \cdots, m$, where
\begin{equation}
   M_0^2 = M_0^2(\|\omega_0\|_2,\nu,\fvi)
	= \|\omega_0\|_2^2+\frac{2c_0^4}{ \nu^2} \fvi^2.
\end{equation}
Then (\ref{4}) and (\ref{5a}) yield
\begin{equation}\label{7}
     \|\om^{n+1}\|_2^2 -\|\om^{n}\|_2^2 +\frac{1}{2}  \|\omega^{n+1} - \omega ^{n}\|_2^2
         +\frac{\nu}{2} \dt \| \omega ^{n+1}\|^2_{H^1} \leq \frac{c_0^2}{\nu}\dt \|f^n\|_2^2
\end{equation}
for all $n=0, \cdots, m$.
Using again the Poincar\'e inequality, the above inequality implies
\begin{equation}\label{8}
       \|\om^{n+1}\|_2^2  \leq \frac{1}{\alpha} \|\om^{n}\|_2^2
         + \frac{c_0^2}{\alpha \nu } \dt \|f^n\|_2^2,
\end{equation}
where
\begin{equation}\label{9}
      \alpha = 1+ \frac{\nu}{2c_0^2} \dt.
\end{equation}
Using recursively (\ref{8}), we find
\begin{equation}\label{10}\begin{aligned}
   \|\om^{m+1}\|_2^2
	&\leq \frac{1}{\alpha^{m+1}}\|\omega^0\|_2^2
                        + \frac{c_0^2}{\nu} \dt  \sum_{i=1}^{m+1}\frac{1}{\alpha^i}\|f^{m+1-i}\|_2^2 \\
                 &\leq \biggl(1+  \frac{\nu}{2c_0^2} \dt \biggr)^{-m-1}\|\omega_0\|_2^2 + \frac{2c_0^4}{ \nu^2} \fvi^2
                         \biggl[1- \biggl(1+\frac{\nu}{2c_0^2} \dt\biggr)^{-m-1} \biggr],
\end{aligned}\end{equation}
and thus (\ref{q:bdv}) holds for $n=m+1$.
We therefore have that (\ref{q:bdv}) holds for $n \geq 0$ and (\ref{q:bdinh}) follows right away.

Now adding inequalities (\ref{7}) with $n$ from $i$ to $m$ and
dropping some positive terms, we find
\begin{equation}\label{11}\begin{aligned}
     \frac{ \nu}{2} \dt \sum_{n=i}^{m} \|\omega^{n+1}\|^2_{H^1}
     &\leq \|\omega^{i}\|_2^2 +
             \frac{c_0^2}{ \nu }\dt \sum_{n=i}^{m}\|f^n\|_2^2 \\
      &\leq  \|\omega^{i}\|_2^2 + \frac{c_0^2}{\nu} \fvi^2 (m-i+1) \dt,
\end{aligned}\end{equation}
which is exactly (\ref{6}). This completes the proof of Lemma \ref{t:bdh}.
\end{proof}

\medskip
\begin{cor}\label{C1}
If
\begin{equation}\label{q:k0}
   0 < \dt \leq \min\left\{\frac{\nu}{4C_w^2M_0^2}, \frac{2c_0^2}{\nu}
   \right\}=:k_0,
\end{equation}
then
\begin{equation}
\label{q:tabs}
    \|\om^{n}\|_2^2\leq 2 \rho_0^2,
        \quad \forall \, n\dt  \geq T_0(\|\omega_0\|_2,\fvi)
           :=\frac{8 c_0^2}{\nu } \ln\left(\frac{\|\omega_0\|_2}{\rho_0}\right),
\end{equation}
where $\rho_0:=(\sqrt{2}c_0^2/\nu)\fvi$.
\end{cor}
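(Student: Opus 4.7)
The plan is to invert the exponential decay in~(\ref{q:bdv}) from Lemma~\ref{t:bdh} to identify the time after which the trajectory enters an absorbing ball of radius $\sqrt2\,\rho_0$. First I would split the right-hand side of~(\ref{q:bdv}) into the ``initial-data'' part $(1+\tfrac{\nu}{2c_0^2}\dt)^{-n}\|\omega_0\|_2^2$ and the ``forced'' part, which is bounded uniformly in $n$ by $\tfrac{2c_0^4}{\nu^2}\fvi^2 = \rho_0^2$. Thus to achieve $\|\omega^n\|_2^2 \le 2\rho_0^2$ it is enough to force the initial-data part to be $\le\rho_0^2$.

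Next I would linearize the logarithm. The \emph{second} term in the minimum in~(\ref{q:k0}), namely $\dt\le 2c_0^2/\nu$, is precisely what is needed to ensure $x:=\tfrac{\nu}{2c_0^2}\dt\in(0,1]$, and on this range the elementary bound $\ln(1+x)\ge x/2$ holds. Hence
\begin{equation*}
   \ln\!\bigl(1+\tfrac{\nu}{2c_0^2}\dt\bigr)\;\ge\;\frac{\nu}{4c_0^2}\dt.
\end{equation*}
Requiring $(1+\tfrac{\nu}{2c_0^2}\dt)^{-n}\|\omega_0\|_2^2\le\rho_0^2$, taking logarithms and using the lower bound above, the condition reduces to
\begin{equation*}
   n\dt\;\ge\;\frac{8c_0^2}{\nu}\,\ln\!\bigl(\|\omega_0\|_2/\rho_0\bigr)\;=\;T_0,
\end{equation*}
which is exactly the threshold appearing in~(\ref{q:tabs}). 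The first term in the minimum in~(\ref{q:k0}) is inherited directly from~(\ref{5a}) so that Lemma~\ref{t:bdh} applies and~(\ref{q:bdv}) is available to begin with.

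A minor case distinction handles $\|\omega_0\|_2\le\rho_0$: then $T_0\le 0$, and already for $n=0$ we have $\|\omega^0\|_2^2\le\rho_0^2\le 2\rho_0^2$, while for $n\ge1$ the bound follows by dropping the decaying term in~(\ref{q:bdv}). In the complementary case $\|\omega_0\|_2>\rho_0$ the argument above applies verbatim. No real obstacle arises; the work is entirely a matter of bookkeeping, and the apparent ``loss'' of a factor of two in $T_0$ (the coefficient $8$ rather than $4$) is the price paid for the simple concavity estimate $\ln(1+x)\ge x/2$ used to eliminate the logarithm of $\alpha$ in favor of $\dt$.
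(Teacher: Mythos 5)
Your proposal is correct and follows essentially the same route as the paper: both arguments bound the forced part of \eqref{q:bdv} by $\rho_0^2$ and use the second constraint in \eqref{q:k0} together with the elementary inequality $1+x\ge \mathrm{e}^{x/2}$ (equivalently $\ln(1+x)\ge x/2$) to convert the geometric decay into an exponential one, yielding exactly the threshold $T_0=\frac{8c_0^2}{\nu}\ln(\|\omega_0\|_2/\rho_0)$. Your explicit handling of the case $\|\omega_0\|_2\le\rho_0$ is a minor tidying-up that the paper leaves implicit.
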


\smallskip
\begin{proof}
From the bound \eqref{q:bdv} on $\|\om^{n}\|_2^2$, we infer that
\[
  \|\om^{n}\|_2^2 \leq \Bigl( 1 + \frac{\nu}{2c_0^2} \dt  \Bigr)^{-n} \|\omega_0\|_2^2 + \rho_0^2,
\]
and using assumption \eqref{q:k0} on $\dt$ and the fact that $1+x \geq
\exp(x/2)$ if $x\in(0,1)$, we obtain
\[
  \|\om^{n}\|_2^2 \leq \exp\Bigl(-n \dt \frac{\nu }{4c_0^2}\Bigr)\|\omega_0\|_2^2 + \rho_0^2.
\]
For $n\dt \geq T_0$, the last inequality implies the conclusion
\eqref{q:tabs} of the Corollary.
\end{proof}

\medskip
Now we show that the $H^1$ norm is also bounded uniformly in time under the same kind of constraint as for the $L^2$ estimate.
To this end, we first prove that $\omega^n$ is bounded for $n
\leq N$, for some $N$, and then, with the aid of a version of the
discrete uniform Gronwall lemma, we show that $\omega^n$ is bounded
for all $n \geq N$.

More precisely, we have the following:

\medskip
\begin{lemma}\label{finite}
Let $\omega_0 \in \Lr^2$ and let $\omega^n$ be the solution of the numerical scheme (\ref{scheme}).
Also, let $\dt\leq k_0$, with $k_0$ as in Corollary \ref{C1}, and let $r\ge 8 c_0^2/\nu$ be arbitrarily fixed.
Then, for $n=1,\cdots, N_0+N_r-1$,
\begin{eqnarray}\label{12}
      \| \omega^{n}\|^2_{H^1}
      \leq 4^{({2C_w^2}/{\nu}) M_0^2 (T_0+r)}\Bigl(\| \omega_0\|^2_{H^1}+ \frac{1}{C_w^2 M_0^2} \fvi^2\Bigr)
\end{eqnarray}
where $N_0= \lfloor T_0/\dt \rfloor$, with $N_r= \lfloor r/\dt \rfloor$
and $T_0$ that in Corollary~\ref{C1}.
\end{lemma}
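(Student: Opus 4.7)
The plan is to derive an $H^1$-energy inequality for the scheme, combine it with \eqref{7} to obtain a recursion of the shape
\[
  \|\omega^{n+1}\|_{H^1}^2 \le \Bigl(1+\tfrac{2C_w^2}{\nu}\,\|\omega^n\|_2^2\,\dt\Bigr)\|\omega^n\|_{H^1}^2 + C_1\,\dt\,\fvi^2,
\]
iterate it over $n\le N_0+N_r-1$ steps, and convert the resulting product into an exponential via the elementary bound $1+x\le 4^x$ valid for $x\ge 0$.

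Concretely, I first take the $L^2$ scalar product of \eqref{scheme} with $-2\dt\,\Delta\omega^{n+1}$. The linear part, after the same telescoping identity used in \eqref{1}, produces
$\|\nabla\omega^{n+1}\|_2^2-\|\nabla\omega^n\|_2^2+\|\nabla(\omega^{n+1}-\omega^n)\|_2^2+2\nu\dt\,\|\Delta\omega^{n+1}\|_2^2$, while Cauchy--Schwarz and Young's inequality on the forcing absorb a portion of the viscous term and leave $C\dt\,\|f^n\|_2^2/\nu$. For the trilinear term I apply the Wente-type estimate \eqref{q:wente2} in the form $|b(\psi,\omega,\tilde\omega)|\le C_w\|\nabla^\perp\psi\|_{H^1}\|\omega\|_{H^1}\|\tilde\omega\|_2$ with $\tilde\omega=-\Delta\omega^{n+1}$, then use the elliptic bound $\|\nabla^\perp\psi^n\|_{H^1}\le \|\omega^n\|_2$ and another Young's inequality to absorb further viscous terms. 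Adding \eqref{7} to recover the full $H^1$ norm and invoking the uniform $L^2$ bound $\|\omega^n\|_2\le M_0$ from Lemma~\ref{t:bdh} (valid since $\dt\le k_0$) yields the recursion displayed above.

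Iterating from $n=0$ and summing the forcing contribution as a geometric series produces
\[
  \|\omega^n\|_{H^1}^2\le \Bigl(1+\tfrac{2C_w^2 M_0^2}{\nu}\dt\Bigr)^n\Bigl(\|\omega_0\|_{H^1}^2+\tfrac{1}{C_w^2 M_0^2}\,\fvi^2\Bigr).
\]
The inequality $1+x\le 4^x$ for $x\ge 0$ (which holds because $\ln(1+x)\le x\le x\ln 4$) turns the multiplier into $4^{(2C_w^2/\nu)M_0^2\,n\dt}$; since $n\le N_0+N_r-1$ implies $n\dt<(N_0+N_r)\dt\le T_0+r$, the bound \eqref{12} follows.

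The main obstacle I anticipate is arranging the Wente estimate so that the iterative multiplier correctly depends on $\|\omega^n\|_{H^1}^2$ (not $\|\omega^{n+1}\|_{H^1}^2$), with the precise coefficient $2C_w^2/\nu$ demanded by \eqref{12}. If the natural Wente application leaves $\|\omega^{n+1}\|_{H^1}$ on the right, one must either split $\omega^n=\omega^{n+1}+(\omega^n-\omega^{n+1})$ and absorb the difference piece using the good term $\|\nabla(\omega^{n+1}-\omega^n)\|_2^2$ together with the $L^2$ difference good term inherited from \eqref{7}, or move the offending $\|\omega^{n+1}\|_{H^1}^2$ to the left and invoke $(1-y)^{-1}\le 4^y$ for $y\in[0,\tfrac12]$ (guaranteed by $\dt\le k_0$) to reach the same exponential bound.
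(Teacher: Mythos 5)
Your proposal is correct and follows essentially the same route as the paper: test \eqref{scheme} against $-2\dt\,\Delta\omega^{n+1}$, control the trilinear term with the Wente estimate \eqref{q:wente2} and the elliptic bound $\|\psi^n\|_{H^2}\le\|\omega^n\|_2$, iterate the resulting recursion, and convert the product to $4^{(2C_w^2/\nu)M_0^2(T_0+r)}$ via an exponential comparison. The paper in fact lands on your anticipated "fallback" form --- it splits $b(\psi^n,\omega^n,\Delta\omega^{n+1})=b(\psi^n,\omega^n-\omega^{n+1},\Delta\omega^{n+1})+b(\psi^n,\omega^{n+1},\Delta\omega^{n+1})$, absorbs the difference piece into $\|\omega^{n+1}-\omega^n\|_{H^1}^2$, obtains $(1-\tfrac{2C_w^2}{\nu}M_0^2\dt)\|\omega^{n+1}\|_{H^1}^2\le\|\omega^n\|_{H^1}^2+\tfrac{2}{\nu}\dt\fvi^2$, and uses $1-x\ge 4^{-x}$ on $(0,\tfrac12)$ --- while your primary variant (bounding $b(\psi^n,\omega^n,\Delta\omega^{n+1})$ directly to get the factor $1+\tfrac{2C_w^2}{\nu}M_0^2\dt$ on $\|\omega^n\|_{H^1}^2$, with $1+x\le 4^x$) also yields \eqref{12} with the stated constants, so either resolution closes the argument.
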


\medskip
\begin{proof}
Taking the scalar product of (\ref{scheme}) with $-2\dt\, \Delta \omega^{n+1}$,
we obtain
\begin{equation}\label{13}\begin{aligned}
    \|\omega^{n+1}\|^2_{H^1} - \|\omega^{n}\|^2_{H^1}
     &+ \|\omega^{n+1} - \omega^{n}\|^2_{H^1} + 2 \nu \dt \|\Delta \omega^{n+1}\|^2_2\\
     &- 2  \dt\, \bb(\psi^n, \omega^n,\Delta  \omega^{n+1})
	= -2\dt(f^n,\Delta  \omega^{n+1})_{L^2}.
\end{aligned}\end{equation}
We bound the right-hand side of~(\ref{13}) using the Cauchy--Schwarz inequality,
\begin{equation}\label{14}
 -2 \dt (f^n,\Delta  \omega^{n+1})_{L^2}
    \leq 2\dt \|f^n\|_2 \|\Delta  \omega^{n+1}\|_2 \leq \frac{\nu}{2} \dt \|\Delta  \omega^{n+1}\|_2^2 + \frac{2}{\nu} \dt \|f ^n\|_2^2.
\end{equation}
Using
the Wente type estimate (\ref{q:wente2}), we bound the nonlinear term as
\begin{align}
2\dt\, \bb(\psi^n, \omega^n,&\Delta  \omega^{n+1})
	= 2\dt\,\bb(\psi^n,   \omega^n-\omega^{n+1}, \Delta  \omega^{n+1})\notag\\
	&\hbox to80pt{} {}+ 2\dt\, \bb(\psi^n,   \omega^{n+1}, \Delta  \omega^{n+1})\notag\\
	&\le 2C_w \dt \|\nabla^\perp\psi^n\|_{H^1} \| \omega^{n+1}-\omega^{n}\|_{H^1} \|\Delta  \omega^{n+1}\|_2 \notag\\
	&\qquad {}+ 2C_w \dt \|\nabla^\perp\psi^n\|_{H^1} \| \omega^{n+1}\|_{H^1} \|\Delta  \omega^{n+1}\|_2 \notag\\
	&\leq \frac{1}{2} \|\omega^{n+1}-\omega^{n}\|^2_{H^1}+2C_w^2 \dt^2 \|\nabla^\perp\psi^n\|^2_{H^1}  \|\Delta  \omega^{n+1}\|_2^2\notag\\
	&\qquad {}+\frac{\nu}{2} \dt \|\Delta  \omega^{n+1}\|_2^2 + \frac{2C_w^2}{\nu} \dt
\|\nabla^\perp\psi^n\|^2_{H^1} \| \omega^{n+1}\|^2_{H^1}\notag\\
	&\leq \frac{1}{2} \|\omega^{n+1}-\omega^{n}\|^2_{H^1}+2C_w^2 \dt^2 \|\omega^n\|_2^2  \|\Delta  \omega^{n+1}\|_2^2\notag\\
	&\qquad {}+\frac{\nu}{2} \dt \|\Delta  \omega^{n+1}\|_2^2 + \frac{2C_w^2}{\nu} \dt
\|\omega^n\|_2^2 \| \omega^{n+1}\|^2_{H^1}.\label{15}
\end{align}
Relations (\ref{13})--(\ref{15}) imply
\begin{equation}\label{16}\begin{aligned}
   \Bigl(1-\frac{2C_w^2}{\nu} \| \omega^{n}\|_2^2 \dt \Bigr)\|\omega^{n+1}\|^2_{H^1} &- \|\omega^{n}\|^2_{H^1}
      + \frac{1}{2}\|\omega^{n+1} - \omega^{n}\|^2_{H^1}\\
	&+ \left( \nu -2C_w^2 \dt M_0^2\right) \dt \|\Delta \omega^{n+1}\|_2^2
	\leq \frac{2}{\nu} \dt \|f^n\|_2^2,
\end{aligned}\end{equation}
from which we find
\begin{equation}\label{17}
      \| \omega^{n+1}\|^2_{H^1} \leq \frac{1}{\alpha}\|\omega^{n}\|^2_{H^1}
         + \frac{2}{\alpha \nu} \dt \fvi^2,
\end{equation}
where
\begin{equation}\label{18}
      \alpha = 1-\frac{2C_w^2}{\nu} \dt M_0^2 >0.
\end{equation}
Using recursively (\ref{17}), we find
\begin{eqnarray}\label{19}
      \| \omega^{n+1}\|^2_{H^1} &\leq& \frac{1}{ \alpha^{n+1}}\| \omega_0\|^2_{H^1}+ \frac{2}{\nu} \dt \fvi^2 \sum_{i=1}^{n+1}\frac{1}{
      \alpha^i}\nonumber \\
      &\leq& \left(1-\frac{2C_w^2}{\nu} \dt M_0^2\right)^{-1-n}\left[\| \omega_0\|^2_{H^1}+ \frac{1}{C_w^2 M_0^2} \fvi^2\right].
\end{eqnarray}
Since $2C_w^2  M_0^2 \dt/\nu \leq 1/2$ by hypothesis \eqref{q:k0} and
\[
   1-x \geq 4^{-x} \textrm{ if } x \in (0,1/2),
\]
relation \eqref{19} gives conclusion \eqref{12} of Lemma
\ref{finite}. Thus, the lemma is proved.
\end{proof}

In order to obtain a uniform bound valid for $n \geq N_0+N_r$, we need the
following discrete uniform Gronwall lemma, which has been proved in
\cite{T2009} and we repeat here for convenience.

\medskip
\begin{lemma}\label{dugronwall}
We are given $\dt>0$, positive integers $ n_0, n_1$, and positive
sequences $\xi_n$, $\eta_n$, $\zeta_n$ such that
\begin{align}
   &\dt  \eta_{n+1} < \sfrac{1}{2}, \quad\forall n \geq n_0,\label{time}\\
   &(1 - \dt \eta_{n+1}) \xi_{n+1} \le \xi_{n} + \dt \zeta_{n+1},
          \quad\forall n \geq n_0.\label{gronseq2}
\end{align}
Assume also that
\begin{equation}\label{groncond}\begin{aligned}
  &\dt \sum_{n=n_2}^{n_2+n_1+1} \eta_n\le a_1, \\
  &\dt \sum_{n=n_2}^{n_2+n_1+1}  \zeta_n \le a_2,\\
  &\dt \sum_{n=n_2}^{n_2+n_1+1}  \xi_n \le a_3,
\end{aligned}\end{equation}
for all  $n_2 \ge n_0$. We then have,
\begin{equation}\label{ugronest}
       \xi_{n+1} \le \Bigl( \frac{a_3}{\dt n_1}
         + a_2 \Bigr)\, {\rm e}^{4a_1}, \quad\forall n > n_0+n_1.
\end{equation}
\end{lemma}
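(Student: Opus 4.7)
The statement is a discrete analogue of the classical uniform Gronwall lemma of Temam, so I would follow the same three-step pattern: (i) rewrite the implicit recurrence in an explicit, Gronwall-iterable form using that $\dt\eta_{n+1}<1/2$; (ii) iterate this recurrence starting from an arbitrary index $s$ in a sliding window of length $n_1$ just before $n$, using the window hypotheses on $\sum\eta$ and $\sum\zeta$; (iii) average the resulting bound over $s$ in that window to replace the unknown pointwise value $\xi_s$ by the controlled sum $\dt\sum\xi_s\le a_3$.

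For step (i), the crucial elementary inequality is
\begin{equation*}
  \frac{1}{1-x}\le 4^{\,x}\qquad\text{for } x\in(0,1/2),
\end{equation*}
which is exactly the bound already used in the proof of Lemma~\ref{finite}. Applied to \eqref{gronseq2} under the time-step restriction \eqref{time}, this gives
\begin{equation*}
  \xi_{n+1}\le 4^{\dt\eta_{n+1}}\xi_n + 4^{\dt\eta_{n+1}}\dt\,\zeta_{n+1}.
\end{equation*}
Iterating this recurrence from an arbitrary index $s$ with $n_0\le s\le n$ yields the discrete Duhamel-type formula
\begin{equation*}
  \xi_{n+1}\le 4^{\dt\sum_{j=s+1}^{n+1}\eta_j}\,\xi_s
     +\sum_{k=s+1}^{n+1}4^{\dt\sum_{j=k}^{n+1}\eta_j}\,\dt\,\zeta_k.
\end{equation*}

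For step (ii), restrict attention to $n>n_0+n_1$ and $s\in\{n-n_1,n-n_1+1,\ldots,n-1\}$. Then $s+1\ge n_0+1$ and the index set $\{s+1,\ldots,n+1\}$ is contained in a block of length $n_1+2$ starting at some $n_2\ge n_0$, so the hypothesis \eqref{groncond} yields $\dt\sum_{j=s+1}^{n+1}\eta_j\le a_1$ and $\dt\sum_{k=s+1}^{n+1}\zeta_k\le a_2$. Since $4^{a_1}\le e^{4a_1}$, this gives $\xi_{n+1}\le e^{4a_1}(\xi_s+a_2)$. Finally, step (iii) consists of summing this inequality over the $n_1$ admissible values of $s$, multiplying by $\dt$, and invoking the third bound in \eqref{groncond} applied to the window $[n-n_1,n+1]$, which produces
\begin{equation*}
  \dt n_1\xi_{n+1}\le e^{4a_1}\bigl(a_3+\dt n_1 a_2\bigr),
\end{equation*}
and dividing by $\dt n_1$ yields \eqref{ugronest}.

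There is no real analytic obstacle here; the entire argument is a textbook-style averaging trick. The only care needed is purely in the bookkeeping of indices: verifying that every sliding window of length $n_1+2$ that appears in the iterate–then–average step actually starts at some $n_2\ge n_0$, so that all three bounds of \eqref{groncond} are legitimately available. Once the index ranges $s\in[n-n_1,n-1]$ and $n>n_0+n_1$ are chosen as above, this verification is routine.
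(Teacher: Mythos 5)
Your proposal is correct and follows essentially the same route as the paper: iterate the recurrence over a sliding window of length $n_1$, bound the products $\prod(1-\dt\eta_j)^{-1}$ by $e^{4a_1}$ via the elementary inequality $1-x\ge 4^{-x}$ (equivalently $1-x\ge e^{-4x}$) on $(0,\tfrac12)$, and then average over the starting index to trade the pointwise $\xi_s$ for the summed bound $a_3$. The index bookkeeping you flag as the only delicate point checks out, so there is nothing to add.
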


\begin{proof}
Let $m_1$ and $m_2$ be such that $n_0 < m_1 \leq m_2 \leq m_1+n_1$.
Using recursively \eqref{gronseq2}, we derive
\begin{equation}\begin{aligned}
  \xi_{m_1+n_1+1} &\le \prod_{n=m_2}^{m_1+n_1+1}\frac{1}{1-\dt \eta_n} \xi_{m_2-1}
    + \dt \sum_{n=m_2}^{m_1+n_1+1} \zeta_n
        \prod_{j=n}^{m_1+n_1+1} \frac{1}{1-\dt \eta_j}.
\end{aligned}
\end{equation}
Using the fact that $1-x \ge \textrm{e}^{-4x}, \,\forall x \in
\left(0, \frac{1}{2} \right)$, and recalling assumptions
${\eqref{time}}$, and the first and second conditions in
${\eqref{groncond}}$, we obtain
\[
  \xi_{m_1+n_1+1} \le (\xi_{m_2-1} + a_2) e^{4a_1}.
\]
Multiplying this inequality by $\dt$, summing $m_2$ from $m_1$ to
$m_1+n_1$ and using the third assumption ${\eqref{groncond}}$ gives
conclusion \eqref{ugronest} of the lemma.
\end{proof}

\medskip
We are now able to derive a uniform bound for $\|\omega^n\|_{H^1}$ valid
for sufficiently large $n$. More precisely, we have the following:

\medskip
\begin{lemma}\label{5}
Let $\omega_0 \in \Lr^2$ and let $\omega^n$ be the solution of the numerical scheme
(\ref{scheme}).
Also, let $\dt\leq k_0$, with $k_0$ as in Corollary~\ref{C1}.
Then  there exist constants $M_1=M_1(\nu, \fvi)$ and
$N=N(\|\omega_0\|_2, \nu, \fvi)$ such that
\begin{equation}\label{20}
     \| \omega^n\|_{H^1} \le M_1,  \quad \forall n\ge N.
\end{equation}
\end{lemma}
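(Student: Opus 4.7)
The plan is to apply the discrete uniform Gronwall lemma (Lemma~\ref{dugronwall}) to a rearrangement of~\eqref{16}, combining it with the $L^2$ absorbing-ball bound from Corollary~\ref{C1} and the time-integrated $H^1$ estimate~\eqref{6}.

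First, I drop the two nonnegative terms on the left-hand side of~\eqref{16}---namely $\tfrac12\|\omega^{n+1}-\omega^n\|_{H^1}^2$ and $(\nu - 2C_w^2\dt M_0^2)\dt\|\Delta\omega^{n+1}\|_2^2$, whose coefficient is nonnegative because $\dt \le k_0$---to obtain
\[
   (1 - \dt\,\eta_{n+1})\,\|\omega^{n+1}\|_{H^1}^2 \le \|\omega^n\|_{H^1}^2 + \dt\,\zeta_{n+1},
\]
which matches~\eqref{gronseq2} with $\xi_n := \|\omega^n\|_{H^1}^2$, $\eta_{n+1} := (2C_w^2/\nu)\|\omega^n\|_2^2$, and $\zeta_{n+1} := (2/\nu)\|f^n\|_2^2$. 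I then fix $r \ge 8c_0^2/\nu$ and set $N_0 := \lfloor T_0/\dt\rfloor$, $N_r := \lfloor r/\dt\rfloor$, intending to apply Lemma~\ref{dugronwall} with $n_0 := N_0+1$ and $n_1 := N_r$.

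The crucial input is Corollary~\ref{C1}: for every $n \ge N_0$ one has $\|\omega^n\|_2^2 \le 2\rho_0^2$, with $\rho_0$ depending only on $\nu$ and $\fvi$. This at once gives the pointwise bound $\dt\,\eta_{n+1} \le 4C_w^2\rho_0^2\dt/\nu$ required for~\eqref{time} (possibly after mildly sharpening $k_0$ to ensure, e.g., $\dt < \nu/(8C_w^2\rho_0^2)$, a quantity still depending only on $\nu$ and $\fvi$), and also produces cumulative bounds $a_1 = O(r C_w^2\rho_0^2/\nu)$ and $a_2 = O(r\fvi^2/\nu)$ for the first two sums in~\eqref{groncond}. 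The characteristic ingredient is the third sum, $\dt\sum_{n=n_2}^{n_2+n_1+1}\|\omega^n\|_{H^1}^2$: this is precisely what~\eqref{6} supplies, applied with $i = n_2$ and $m = n_2+n_1+1$ and using $\|\omega^{n_2-1}\|_2^2 \le 2\rho_0^2$ (valid since $n_2 - 1 \ge N_0$), producing an $a_3$ depending only on $\nu, \fvi, r$. Lemma~\ref{dugronwall} then yields
\[
   \|\omega^{n+1}\|_{H^1}^2 \le \Bigl(\frac{a_3}{\dt\,N_r} + a_2\Bigr)\mathrm{e}^{4a_1} =: M_1^2 \qquad \forall\, n > N_0 + N_r,
\]
and taking $N := N_0 + N_r + 2$---which depends on $\|\omega_0\|_2$ only through $N_0$---completes the argument.

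The main technical obstacle is precisely verifying~\eqref{time} at the $\rho_0$ level: $\dt \le k_0$ alone yields only $\dt\,\eta_{n+1} \le 1$ rather than $<\tfrac12$, so one must either slightly tighten the step-size bound to one depending only on $\nu$ and $\fvi$, or refine the constant in $1-x \ge \mathrm{e}^{-4x}$. Conceptually, however, the mechanism is transparent: because the Wente estimate leaves the coefficient of the problematic gain term in~\eqref{16} as $\|\omega^n\|_2^2$ rather than $\|\omega^n\|_{H^1}^2$, the $L^2$ absorbing ball eventually tames it, and~\eqref{6} converts time-averaged $H^1$ control into pointwise uniform control through Lemma~\ref{dugronwall}.
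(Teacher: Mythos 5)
Your proposal is correct and follows essentially the same route as the paper: rearrange \eqref{16} into the form \eqref{gronseq2} with $\xi_n=\|\omega^n\|_{H^1}^2$, $\eta_n=2C_w^2\|\omega^{n-1}\|_2^2/\nu$, $\zeta_n=2\fvi^2/\nu$, feed the absorbing-ball bound \eqref{q:tabs} into $a_1,a_2$ and the time-averaged estimate \eqref{6} into $a_3$, and invoke Lemma~\ref{dugronwall} (the paper uses $n_0=N_0+2$, $n_1=N_r-2$, a trivially different indexing). Your remark about the borderline strictness of condition \eqref{time} is a fair observation of a point the paper passes over silently.
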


\begin{proof}
Let $\dt$ be as in the hypothesis,  $T_0$ be as in Corollary
\ref{C1}, $r$ as in Lemma \ref{finite} and set $N_0:=\lfloor T_0/\dt \rfloor$. We will apply Lemma
\ref{dugronwall} to \eqref{16}, with $\xi_n=\| \omega^{n}\|^2_{H^1}$,
$\eta_n=2C_w^2 \| \omega^{n-1}\|_2^2/\nu$, $\zeta_n=2\fvi^2/ \nu$,
$n_0=N_0+2$, $n_1=N_r-2$. For $n_2 \ge n_0$, we compute (taking into
account that, by \eqref{q:tabs}, $\| \omega^{n}\|_2^2 \leq 2\rho_0^2$,
for $n\geq N_0$):
\begin{eqnarray}
  &\dt& \sum_{n=n_2}^{n_2+n_1+1} \eta_n =  \dt \sum_{n=n_2}^{n_2+n_1+1} \frac{2C_w^2 }{\nu}\| \omega^{n-1}\|_2^2\le \frac{4C_w^2 }{\nu}\rho_0^2r:=a_1, \\
  &\dt& \sum_{n=n_2}^{n_2+n_1+1}  \zeta_n = \dt \sum_{n=n_2}^{n_2+n_1+1} \frac{2}{\nu}\fvi^2\leq\frac{2}{\nu}\fvi^2r:=a_2,\\
  &\dt& \sum_{n=n_2}^{n_2+n_1+1}  \xi_n=\dt \sum_{n=n_2}^{n_2+n_1+1}\| \omega^{n}\|^2_{H^1} \quad (\text{by } \eqref{6})\\
  && \quad \quad \quad \quad \quad \leq \frac{2}{\nu}\left(\|\omega^{n_2-1}\|_2^2 + \frac{c_0^2}{\nu} \fvi^2 (n_1+2) \dt\right) \quad (\text{by } \eqref{q:tabs})\\
  && \quad \quad \quad \quad \quad \leq \frac{2}{\nu}\left[2\rho_0^2 + \frac{c_0^2}{\nu} \fvi^2 r\right]=:a_3.
\end{eqnarray}
By \eqref{ugronest}, we obtain
\begin{align}
   \|\omega^{n}\|^2_{H^1}
	&\le \left[ \frac{4}{\nu}\left(\frac{2\rho_0^2}{r} + \frac{1}{\nu\lb} \fvi^2 \right)+\frac{2}{\nu}\fvi^2r\right] \exp\left( \frac{16C_w^2 }{\nu}\rho_0^2r\right)\\
	&=: M_1^2(\nu, \fvi), \quad \forall n \geq N_0+N_r.
\end{align}
Taking $N=N_0+N_r$, we obtain conclusion \eqref{20} of Lemma \ref{5}.
\end{proof}

\medskip
We can summarize the above results in the following:

\medskip
\begin{theorem}\label{stability}
   The classical scheme (\ref{scheme}) defines a discrete dynamical system on $\Lr^2$ that is long time stable in both $L^2$ and $H^1$ norms.
More precisely, for any $\omega_0 \in \Lr^2$,
there exist  constants $k_0 = k_0(\|\omega_0\|_2, \nu, \fvi)$,
$    M_0 = M_0(\|\omega_0\|_2, \nu, \fvi)$,
$    M_1=M_1(\nu, \fvi)$ and
$    N=N(\|\omega_0\|_2, \nu, \fvi)$
such that
\begin{eqnarray}
     \|\omega^n\|_2 &\le& M_0, \quad \forall n \ge 0, \forall k\in (0, k_0),
     \\
     \| \omega^n\|_{H^1} &\le& M_1,  \quad \forall n\ge N, \forall k \in (0, k_0).
\end{eqnarray}
\end{theorem}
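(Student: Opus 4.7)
The plan is to assemble Theorem~\ref{stability} directly from the three preceding results, since each clause of the statement corresponds to something already proved. First I would invoke Lemma~\ref{dynsyst} to conclude that the map $\omega^n \mapsto \omega^{n+1}$ defined by \eqref{scheme} is well-posed and continuous on $\Lr^2$, so iterating it yields a discrete semigroup; this settles the ``defines a discrete dynamical system on $\Lr^2$'' portion of the statement.

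Next I would extract the $L^2$ bound from Lemma~\ref{t:bdh}. That lemma supplies an explicit
\[
  M_0^2 = \|\omega_0\|_2^2 + \frac{2c_0^4}{\nu^2}\fvi^2
\]
and the time-step threshold $\dt \le \nu/(4C_w^2 M_0^2)$ under which $\|\omega^n\|_2 \le M_0$ holds for every $n \ge 0$. I would then observe that the $k_0$ introduced in Corollary~\ref{C1}, namely $k_0 = \min\{\nu/(4C_w^2 M_0^2),\, 2c_0^2/\nu\}$, is a refinement of the restriction \eqref{5a}, so taking $\dt \in (0, k_0)$ preserves the uniform $L^2$ bound. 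This yields the first inequality of the theorem with $M_0 = M_0(\|\omega_0\|_2, \nu, \fvi)$ and $k_0 = k_0(\|\omega_0\|_2, \nu, \fvi)$ as required.

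For the eventual $H^1$ bound I would cite Lemma~\ref{5}, which under the same restriction $\dt \le k_0$ produces constants $M_1 = M_1(\nu, \fvi)$ and $N = N(\|\omega_0\|_2, \nu, \fvi)$ such that $\|\omega^n\|_{H^1} \le M_1$ for every $n \ge N$. I would emphasize at this point that $M_1$ is independent of $\|\omega_0\|_2$, which is exactly the absorbing-ball character inherited from the asymptotic estimate \eqref{q:tabs} used inside the discrete uniform Gronwall argument; the dependence on initial data is absorbed into the waiting time $N = N_0 + N_r$, since $N_0 = \lfloor T_0/\dt\rfloor$ involves $T_0 = T_0(\|\omega_0\|_2, \fvi)$ from Corollary~\ref{C1}.

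Since all three ingredients have been proved separately, there is no genuine obstacle; the only care required is bookkeeping, namely choosing the $k_0$ of the theorem to coincide with the one from Corollary~\ref{C1} so that Lemma~\ref{t:bdh} and Lemma~\ref{5} apply simultaneously, and tracing through the dependencies of $k_0$, $M_0$, $M_1$, $N$ on $\|\omega_0\|_2$, $\nu$, and $\fvi$ to verify that they match the form claimed in the statement. The proof is thus essentially a one-paragraph synthesis of the previous lemmas.
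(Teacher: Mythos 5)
Your proposal is correct and matches the paper's treatment exactly: the paper states Theorem~\ref{stability} as a summary of Lemma~\ref{dynsyst}, Lemma~\ref{t:bdh}, and Lemma~\ref{5}, with the time-step restriction taken to be the $k_0$ of Corollary~\ref{C1} so that all of them apply simultaneously. Your bookkeeping of the constant dependencies ($M_1$ independent of $\|\omega_0\|_2$, the initial-data dependence absorbed into $N=N_0+N_r$) is also accurate.
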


\subsection{Convergence of long time statistics}

Here we show that, with time-independent forcing,
the long time statistical properties as well as the global attractors of the scheme (\ref{scheme}) converge to that of the Navier--Stokes sytem (\ref{NSE}) at vanishing time step size.
This is a straightforward application of the abstract convergence result (Prop.~\ref{abs_conv_stat}) in Appendix~\ref{s:conv}, which itself is a slight modification of the results presented in \cite{W2010}.

\medskip
\begin{theorem}\label{conv_stat}
Let $\dy_tf=0$.
The global attractor and the long time statistical properties of the classical scheme (\ref{scheme}) converge to that of the Navier--Stokes system (\ref{NSE}) at vanishing time step.
\end{theorem}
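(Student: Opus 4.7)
The strategy is to verify the hypotheses of the abstract convergence result stated as Proposition~\ref{abs_conv_stat} in Appendix~\ref{s:conv}, and then to invoke it. Lemma~\ref{dynsyst} already identifies the solution map of \eqref{scheme} as a well-defined discrete semigroup $S_{\dt}$ on $\Lr^2$, and Theorem~\ref{stability} supplies the $\dt$-independent absorbing balls $\{\|\om\|_2\le M_0\}$ in $\Lr^2$ and $\{\|\om\|_{H^1}\le M_1\}$ in $\Hr^1_{per}$. Since $\Hr^1_{per}\hookrightarrow\Lr^2$ compactly, the $\Hr^1$-ball is a $\dt$-uniform, $\Lr^2$-compact absorbing set, which is precisely the uniform dissipativity condition demanded by the abstract framework.

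With uniform dissipativity and compact absorption in place, the existence of a global attractor $\Attr_{\dt}$ for each admissible $\dt$ is standard (via $\omega$-limit of the absorbing set), and all $\Attr_{\dt}$ lie in a single $\Lr^2$-compact set. Invariant measures $\mu_{\dt}$ for $S_{\dt}$ are then produced by the Krylov--Bogolyubov procedure applied to Cesaro time averages $\frac{1}{N}\sum_{n=0}^{N-1}\delta_{\om^n}$; tightness follows from the common compact support, so weak-$*$ subsequential limits exist. The corresponding statements for the continuous system \eqref{NSE}---existence of $\Attr$ and of invariant measures, all concentrated in an $\Hr^1$-bounded, $\Lr^2$-compact set---are classical \cite{CF1988,FMRT2001,T1997}.

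The only remaining hypothesis is finite-time consistency: for any fixed $T>0$ and any $\om_0$ in a bounded subset of $\Lr^2$, one needs $S_{\dt}^{\lfloor t/\dt\rfloor}\om_0 \to S(t)\om_0$ in $\Lr^2$, uniformly on $[0,T]$, as $\dt\to 0$, where $S(t)$ is the NSE semigroup. This is exactly the short-time convergence of the scheme \eqref{scheme} that is already recalled in the introduction \cite{G1991a,G1991b,G1992,GS1998,TM1996}, and the $\dt$-uniform bounds of Theorem~\ref{stability} let us apply it on the (fixed) absorbing set. Feeding (i) uniform dissipativity with compact absorption, (ii) existence of invariant measures of both systems, and (iii) finite-time consistency into Proposition~\ref{abs_conv_stat} then yields at once the upper semicontinuity of $\Attr_{\dt}\to\Attr$ in the $\Lr^2$ Hausdorff semi-distance and the weak-$*$ convergence (up to subsequences) of the $\mu_{\dt}$ to an invariant measure of the NSE, which are the two conclusions of the theorem.

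The main obstacle is conceptual rather than technical: one has to make sure the abstract proposition is formulated for a family of discrete-time semigroups parametrized by $\dt$ rather than for a single continuous semigroup, and that ``convergence of long time statistics'' is interpreted in the appropriate weak-$*$ sense against continuous test functionals on $\Lr^2$. All the genuinely analytic work---the uniform $L^2$ and $H^1$ bounds driven by the Wente-type control of the nonlinear term---has already been carried out in Lemmas~\ref{t:bdh}--\ref{5}, so the proof of Theorem~\ref{conv_stat} itself is essentially a bookkeeping invocation of the abstract result.
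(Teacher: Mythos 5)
Your overall strategy coincides with the paper's: both proofs are an invocation of the abstract result Prop.~\ref{abs_conv_stat}, with the uniform dissipativity (H3) supplied by Theorem~\ref{stability} and the uniform continuity (H5) of the continuous semigroup taken as classical. The extra material you include on constructing $\Attr_{\dt}$ from the compact absorbing set and on Krylov--Bogolyubov averages is really part of the proof of the abstract proposition (deferred to \cite{W2010}) rather than of this theorem, but it does no harm.

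There is, however, a genuine gap in your treatment of the finite-time consistency hypothesis (H4). You dispose of it by citing the ``standard and well-known'' fixed-interval convergence results \cite{G1991a, G1991b, G1992, GS1998, TM1996} and asserting that the $\dt$-uniform bounds of Theorem~\ref{stability} make them applicable uniformly on the absorbing set. That does not suffice: H4 demands convergence that is \emph{uniform over all initial data in the attractor}, and the error constants in any such finite-time analysis are controlled not by the $L^2$/$H^1$ bounds on the \emph{numerical} solution (which is all Theorem~\ref{stability} gives) but by the regularity of the \emph{exact} trajectories emanating from the attractor --- concretely, by bounds on $\|\dy_{tt}\omega\|_{H^{-1}}$ and $\|\dy_t\omega\|_{L^2}$ along those trajectories, which enter through the local truncation error. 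This is precisely why the paper does not just cite the classical convergence theory but proves a dedicated estimate, Lemma~\ref{t:error}, introducing the attractor-regularity constant
\begin{equation*}
   M_V = \sup_{\omega\in\Attr}\,\bigl(\|\dy_{tt}\omega\|_{H^{-1}}^2 + \|\omega\|_{L^2}^2\|\dy_t\omega\|_{L^2}^2\bigr)
\end{equation*}
and deriving the uniform bound $\|\omega^n-\omega(n\dt)\|_2^2 \le \dt\,C(M_0,M_V;\nu)$ on $0\le n\dt\le 1$ via a discrete Gronwall argument with Wente-type control of the error equation's nonlinear terms. To close your proof you would need to either reproduce this estimate or at least verify that the constants in the cited references depend only on quantities that are uniformly bounded over the attractor; as written, the uniformity over initial data --- the whole point of H4 --- is asserted rather than proved.
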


\medskip
\begin{proof}
We use the abstract convergence result Prop.~\ref{abs_conv_stat},
taking $X= B(0,{\|f\|_2^{}}/{\nu})$, i.e.\ a ball in $\Lr^2$
centered at the origin with radius ${\|f\|_2^{}}/{\nu}$.
(The size of the ball needs to be adjusted depending on the absorbing property of the scheme.)

The uniform continuity (H5) of the Navier--Stokes system (\ref{NSE}) is a classical result \cite{CF1988, T1983}.
The uniform dissipativity (H3) of the scheme (\ref{scheme}) for small enough time step with the choice of the phase space $X$ follows from Theorem \ref{stability}.
The uniform convergence on finite time interval (H4) is proved
in Lemma~\ref{t:error} below.
\end{proof}

\medskip
\begin{lemma}\label{t:error}
Let $\omega$ be the solution of the continuous system \eqref{NSE} with
$\omega(0)=\omegaz\in\Attr$ and $\omega^n$ that of \eqref{scheme}
with $\omega^0=\omegaz$.
Assume that $f$ is sufficiently smooth so that
\begin{equation}
   M_V := \sup_{\omega\in\Attr}\,\bigl(\|\dy_{tt}\omega\|_{H^{-1}}^2 + \|\omega\|_{L^2}^2\|\dy_t\omega\|_{L^2}^2\bigr) < \infty,
\end{equation}
and that Theorem~\ref{stability} holds.
Then for $\dt<k_0$ one has
\begin{equation}
   \|\omega^n-\omega(n\dt)\|_2^2 \le \dt\,C(M_0,M_V;\nu)
\end{equation}
for all $0\le n\dt\le 1$.
\end{lemma}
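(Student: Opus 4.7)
The plan is to derive an error equation for $e^n := \omega^n - \omega(t_n)$, take the $\Lr^2$ inner product with $2\dt\,e^{n+1}$, bound the nonlinear discrepancy and the consistency remainder by repeated use of the Wente estimate~(\ref{q:wente2}), and close with a discrete Gronwall inequality.  Setting $\tilde\omega^n := \omega(t_n)$, I would integrate~(\ref{NSE}) over $[t_n,t_{n+1}]$ and subtract~(\ref{scheme}) (with $f^n = f$) to obtain
\[
   \frac{e^{n+1}-e^n}{\dt} + \bigl[\sgb\psi^n\ncdot\gb\omega^n - \sgb\tilde\psi^n\ncdot\gb\tilde\omega^n\bigr] - \nu\lap e^{n+1} = -R_1^n - R_2^n,
\]
with consistency remainders
\[\begin{aligned}
   R_1^n &= \frac{1}{\dt}\int_{t_n}^{t_{n+1}}\bigl[\sgb\tilde\psi^n\ncdot\gb\tilde\omega^n - \sgb\psi(s)\ncdot\gb\omega(s)\bigr]\,ds, \\
   R_2^n &= \frac{\nu}{\dt}\int_{t_n}^{t_{n+1}}\lap\bigl[\omega(s) - \tilde\omega^{n+1}\bigr]\,ds.
\end{aligned}\]
The $\Lr^2$ pairing with $2\dt\,e^{n+1}$ and the polarization identity as in~(\ref{1}) produce $\|e^{n+1}\|_2^2 - \|e^n\|_2^2 + \|e^{n+1}-e^n\|_2^2 + 2\nu\dt\|e^{n+1}\|_{H^1}^2$ on the left, together with the nonlinear discrepancy, balanced by $-2\dt(R_1^n+R_2^n,e^{n+1})_{L^2}^{}$ on the right.

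For the nonlinear discrepancy I would use the decomposition
\[
   \bb(\psi^n,\omega^n,e^{n+1}) - \bb(\tilde\psi^n,\tilde\omega^n,e^{n+1}) = \bb(\phi^n,\omega^n,e^{n+1}) + \bb(\tilde\psi^n,e^n,e^{n+1}),
\]
where $-\lap\phi^n = e^n$.  The second bilinear reduces to $\bb(\tilde\psi^n,e^n-e^{n+1},e^{n+1})$ after the diagonal term is killed by antisymmetry, and is absorbed into $\sfrac{1}{2}\|e^{n+1}-e^n\|_2^2 + \sfrac{\nu}{2}\dt\|e^{n+1}\|_{H^1}^2$ by Wente~(\ref{q:wente2}), Young's inequality, the uniform bound $\|\tilde\omega^n\|_2\le M_0$, and $\dt\le k_0$, in exact analogy with~(\ref{3}).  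The first bilinear I would estimate by Wente in its antisymmetric form with the $L^2$ slot placed on $\omega^n$ (so that $\|\omega^n\|_2\le M_0$ applies) and $\|\sgb\phi^n\|_{H^1}\le C\|e^n\|_2$, yielding a bound by $\sfrac{\nu}{4}\dt\|e^{n+1}\|_{H^1}^2 + (4C_w^2M_0^2/\nu)\dt\|e^n\|_2^2$.

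For the consistency I would Taylor-expand in time.  Writing $\sgb\tilde\psi^n\ncdot\gb\tilde\omega^n - \sgb\psi(s)\ncdot\gb\omega(s) = -\int_{t_n}^s\dy_\tau[\sgb\psi\ncdot\gb\omega]\,d\tau$ turns $(R_1^n,e^{n+1})_{L^2}^{}$ into a double time integral of the two bilinears $\bb(\dy_\tau\psi,\omega,e^{n+1})$ and $\bb(\psi,\dy_\tau\omega,e^{n+1})$; applying Wente with antisymmetry placed so that $e^{n+1}$ carries the $H^1$ norm and the remaining slots carry $L^2$ norms bounds each integrand by $C\|\omega\|_2\|\dy_\tau\omega\|_2\|e^{n+1}\|_{H^1}\le C\sqrt{M_V}\|e^{n+1}\|_{H^1}$, whence $|(R_1^n,e^{n+1})_{L^2}^{}|\le C\dt\sqrt{M_V}\|e^{n+1}\|_{H^1}$.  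For $R_2^n$ I would substitute $\nu\lap\omega = \dy_t\omega + \sgb\psi\ncdot\gb\omega - f$ from~(\ref{NSE}), splitting $R_2^n$ into a time-derivative piece bounded in $\Hr^{-1}$ by $\dt\sup\|\dy_{tt}\omega\|_{H^{-1}}\le \dt\sqrt{M_V}$ and a nonlinear piece of the same form as $R_1^n$.  Young's inequality then absorbs the total consistency into $\sfrac{3\nu}{4}\dt\|e^{n+1}\|_{H^1}^2$ on the left plus $C(\nu,M_V)\,\dt^3$ on the right.

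Collecting everything and dropping the nonnegative $\sfrac{1}{2}\|e^{n+1}-e^n\|_2^2$ and the leftover $\sfrac{\nu}{2}\dt\|e^{n+1}\|_{H^1}^2$ on the left, one is left with
\[
   \|e^{n+1}\|_2^2 \le (1+C_1\dt)\|e^n\|_2^2 + C_2\,\dt^3,
\]
with $C_1 = C_1(M_0,\nu)$ and $C_2 = C_2(M_V,\nu)$.  A routine discrete Gronwall argument, combined with $e^0 = 0$ and $n\dt\le 1$, then yields $\|e^n\|_2^2\le C(M_0,M_V,\nu)\,\dt^2$, which is actually strictly stronger than the stated bound.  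The main obstacle is the careful placement of antisymmetries inside each application of Wente so that the nonlinear discrepancy and both consistency remainders are controlled purely by $L^2$ data, the uniform $M_0$ and the attractor quantity $\|\omega\|_2\|\dy_t\omega\|_2$, while the $H^1$ seminorm of $e^{n+1}$ picked up in each bound is absorbed by the viscous term on the left without overdrawing the available $2\nu\dt\|e^{n+1}\|_{H^1}^2$.
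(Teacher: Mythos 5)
Your proposal is correct and follows essentially the same route as the paper: the same splitting of the nonlinear discrepancy into $\bb(\phi^n,\omega^n,e^{n+1})+\bb(\tilde\psi^n,e^n,e^{n+1})$ with the diagonal term killed by antisymmetry, the same placement of the Wente estimate \eqref{q:wente2}--\eqref{q:wente3} so that only $L^2$ data and the uniform bound $M_0$ appear, and the same discrete Gronwall closure yielding the $O(\dt^2)$ bound. The only (harmless) difference is bookkeeping of the local truncation error: you integrate the PDE over $[t_n,t_{n+1}]$ and convert the Laplacian residual back through \eqref{NSE}, whereas the paper uses the Taylor identity with the $(t-n\dt)\,\dy_{tt}\omega$ kernel directly; both reduce the consistency term to the quantities controlled by $M_V$.
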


\medskip
\begin{proof}
We follow the approach in \cite[\S17]{MT1998} and take $\dy_tf=0$.
For notational convenience, we write $t_n:=n\dt$ and $\omega_n:=\omega(n\dt)$.
Using the identity
\begin{equation}
   \int_{n\dt}^{(n+1)\dt} (t-n\dt)\,\dy_{tt}\omega(t) \dtt
	= \dt\,\dy_t\omega\big|_{(n+1)\dt}^{} - \omega_{n+1} + \omega_n\,,
\end{equation}
we have
\begin{equation}
   \frac{\omega_{n+1}-\omega_n}{\dt} + \sgb\psi_n\cdot\gb\omega_n
	- \nu\Delta\omega_{n+1} = f + R_{n+1}\,.
\end{equation}
Here $-\Delta\psi_n:=\omega_n$ and the local truncation error is
\begin{equation}\label{q:rndef}
   -R_{n+1} := \sgb\delta\psi_{n+1}\cdot\gb\omega_{n}
	- \sgb\psi_{n+1}\cdot\gb\delta\omega_{n+1}
	+ \frac{1}{\dt} \int_{n\dt}^{(n+1)\dt} \!(t-n\dt)\, \dy_{tt}\omega(t) \;\mathrm{d}t
\end{equation}
with
\begin{equation}
   \delta\omega_{n+1}:=\omega_{n+1}-\omega_n
	= \int_{n\dt}^{(n+1)\dt} \dy_t\omega(t) \dtt
   \quad\textrm{and}\quad
   -\Delta\delta\psi_{n+1} := \delta\omega_{n+1}.
\end{equation}

We now consider the error $e^n:=\omega_n-\omega^n$, which satisfies
\begin{equation}\begin{aligned}
   \frac{e^{n+1}-e^n}{\dt} - \nu\Delta e^{n+1}
	&= \sgb\psi^n\cdot\gb\omega^n - \sgb\psi_n\cdot\gb\omega_n + R_{n+1}\\
	&= -\sgb\psi_n\cdot\gb e^n - \sgb\phi^n\cdot\gb\omega^n + R_{n+1}\\
\end{aligned}\end{equation}
with $e^0=0$ and $-\Delta\phi^n:=e^n$.
Multiplying by $2\dt\, e^{n+1}$, we find
\begin{equation}\begin{aligned}
   \|e^{n+1}\|_2^2 &- \|e^n\|_2^2 + \|e^{n+1}-e^n\|_2^2
	+ 2\nu\dt\|e^{n+1}\|_{H^1}^2\\
	&+ 2\dt\,\bb(\psi_n,e^{n+1},e^{n+1}-e^n)
	+ 2\dt\,\bb(\phi^n,\omega^n,e^{n+1})\\
	&= 2\dt\,(R_{n+1},e^{n+1}).
\end{aligned}\end{equation}
Bounding the nonlinear terms as
\begin{equation}\begin{aligned}
   2\dt\,(\sgb\psi_n\cdot\gb e^{n+1},e^{n+1}-e^n)
	&\le \|e^{n+1}-e^n\|_2^2 + \dt^2\|\sgb\psi_n\cdot\gb e^{n+1}\|_2^2\\
	&\le \|e^{n+1}-e^n\|_2^2 + C_w^2\dt^2\|\omega_n\|_2^2\,\|\gb e^{n+1}\|_2^2
\end{aligned}\end{equation}
where \eqref{q:wente2} has been used for the second inequality, and
\begin{equation}\begin{aligned}
   2\dt\,(\sgb\phi^n\cdot\gb\omega^n,e^{n+1})
	&\le 2\dt\,\|\sgb\phi^n\cdot\gb e^{n+1}\|_2^{}\|\omega^n\|_2^{}\\
	&\le 2 C_w\dt\,\|e^n\|_2^{}\|e^{n+1}\|_{H^1}^{}\|\omega^n\|_2^{}\\
	&\le \nu\dt\,\|e^{n+1}\|_{H^1}^2 + \frac{C_w^2\dt}{\nu}\,\|\omega^n\|_2^2\,\|e^n\|_2^2\,,
\end{aligned}\end{equation}
we obtain,
noting that $\dt\le k_0$ implies $\nu-C_w^2\dt\,\|\omega_n\|_2^2\ge\nu/2>0$,
\begin{equation}\begin{aligned}
   \|e^{n+1}\|_2^2 + \dt\,\bigl(\nu &- C_{w}^2\dt\,\|\omega_n\|_2^2\bigr)\,\|e^{n+1}\|_{H^1}^2\\
	&\le \Bigl(1 + \frac{C_w^2\dt}{\nu}\|\omega^n\|_2^2\Bigr)\,\|e^n\|_2^2
	+ c\dt\,\|R_{n+1}\|_{H^{-1}}^2\,.
\end{aligned}\end{equation}

It remains to bound $R_{n+1}$ in $H^{-1}$, so for the second term in
\eqref{q:rndef} we compute, for any fixed $\vfi\in \Hr^1$,
\begin{equation}\begin{aligned}
   \bigl|b\bigl(\psi_{n+1},\dy_t\omega,\vfi\bigr)\bigr|
	&= \bigl|\bigl(\sgb\vfi\cdot\gb\psi_{n+1},\dy_t\omega\bigr)_{L^2}^{}\,\bigr|\\
	&\le C_w\,\|\vfi\|_{H^1}^{}\|\psi_{n+1}\|_{H^2}^{}\|\dy_t\omega\|_{L^2}^{}
\end{aligned}\end{equation}
where \eqref{q:wente2} and the identity $b(p,q,r)=b(q,r,p)=b(r,p,q)$
have been used.
Similarly, for the first term,
\begin{equation}\begin{aligned}
   \bigl|b\bigl(\omega_n,\dy_t\psi,\vfi\bigr)\bigr|
	&= \bigl|\bigl(\sgb\vfi\cdot\gb\dy_t\psi,\omega_n\bigr)_{L^2}^{}\,\bigr|\\
	&\le C_w\,\|\vfi\|_{H^1}^{}\|\omega_n\|_{L^2}^{}\|\dy_t\psi\|_{H^2}^{}.
\end{aligned}\end{equation}
The last term in \eqref{q:rndef} is readily bounded, and we have by Cauchy--Schwarz,
\begin{equation}\begin{aligned}
   \|R_{n+1}\|_{H^{-1}}^2
	&\le c\,\dt\!\sup_{t\in[n\dt,(n+1)\dt]}\|\omega(t)\|_{L^2}^2
		\int_{n\dt}^{(n+1)\dt} \|\dy_t\omega(t)\|_{L^2}^2 \dtt\\
	&\hbox to120pt{} {}+ {\dt} \int_{n\dt}^{(n+1)\dt} \|\dy_{tt}\omega(t)\|_{H^{-1}}^2 \dtt.
\end{aligned}\end{equation}

The following bound then follows easily
\begin{align}
   \|\omega_{n+1}-\omega^{n+1}\|_2^2 &= \|e^{n+1}\|_2^2
	\le c\,\Bigl(1 + \frac{c\dt}{\nu}M_0^2\Bigr)^{n+1}
	\sum_{j=0}^n\, \dt\,\|R_{j+1}\|_{H^{-1}}^2\notag\\
	&\le c\,\dt^2\,\exp\Bigl(\frac{c\, (n+1)\dt}{\nu}\,M_0^2\Bigr) M_2((n+1)\dt)
\end{align}
where
\begin{equation}
   M_2(t) := \int_0^{t} \|\dy_{tt}\omega(t')\|_{H^{-1}}^2 \dtt'
	+ \sup_{t'\in[0,t]}\|\omega(t')\|_{L^2}^2
		\int_0^{t} \|\dy_t\omega(t')\|_{L^2}^2 \dtt',
\end{equation}
and with it the lemma.
\end{proof}

\section{Galerkin Fourier spectral approximation}

This section is devoted to the long time stability of the following Galerkin Fourier spectral approximation of the two dimensional Navier--Stokes equations
\begin{equation}\label{Galerkin_scheme}
    \frac{\omega_N^{n+1}-\omega_N^n}{\dt} + P_N(\nabla^\perp\psi_N^n\cdot\nabla\omega_N^n) - \nu\Delta\omega_N^{n+1} = P_N(f^n).
\end{equation}
where $\omega_N^n$, $\psi_N^n  \in \mathcal{P}_N := \{\textrm{all trigonometric functions on $\Omega$ with frequency in each}$
$\textrm{direction at most $N$}\}$.
$P_N$ is defined as the orthogonal projection from $\Lr^2(\Omega)$ onto $\mathcal{P}_N$.

Just like for the semi-discrete scheme \eqref{scheme}, we can  show that  the scheme (\ref{Galerkin_scheme})  is uniformly bounded
in $L^2$, provided that the time step is sufficiently small. More precisely, we have the following:

\medskip
\begin{lemma}\label{l:bdh}
 Let $\omega_0 \in \Lr^2$ and let $\omega_N^n$ be the solution of the numerical scheme (\ref{Galerkin_scheme}). Also, let  $f \in L^\infty(\Real_+;H)$ and set $\|f\|_{\infty} := \|f\|_{L^\infty(\Real_+;H)}$.
  Then there exists $M_0 = M_0(\|\omega_0\|_2, \nu, \|f\|_{\infty})$ such that if
  \begin{equation} \label{46}
  \dt \leq \frac{\nu}{4C_w^2M_0^2},
  \end{equation}
   then
\begin{equation}\label{47}
   \|\omega^n_N\|_2 \leq M_0, \, \forall \, n \geq 0,
\end{equation}
\begin{equation}\label{48}
   \|\omega^n_N\|_2^2 \le \left(1+  \frac{\nu }{2c_0^2} \dt \right)^{-n} \|\omega_0\|_2^2
           +  \frac{2c_0^4}{ \nu^2} \fvi^2 \left[ 1 - \left( 1 + \frac{\nu}{2c_0^2} \dt \right)^{-n} \right]
                ,
    \>\forall\, n\ge0,
\end{equation}
and
 \begin{eqnarray}\label{49}
     \frac{ \nu}{2} \dt \sum_{n=i}^{m} \|\omega_N^{n}\|^2_{H^1}
     \leq \|\omega_N^{i-1}\|_2^2 + \frac{c_0^2}{\nu} \fvi^2 (m-i+1) \dt, \,
             \quad \forall \, i=1, \cdots, m.
\end{eqnarray}
\end{lemma}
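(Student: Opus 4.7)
\medskip\noindent\textbf{Proof proposal.}
The plan is to repeat the argument of Lemma~\ref{t:bdh} essentially verbatim, exploiting the fact that the orthogonal projection $P_N$ disappears whenever the inner product is taken against a function already in $\mathcal{P}_N$. First I would take the $L^2$ scalar product of \eqref{Galerkin_scheme} with $2\dt\,\omega_N^{n+1}$. Since $\omega_N^{n+1}\in\mathcal{P}_N$, the self-adjointness of $P_N$ yields
\begin{equation*}
   \bigl(P_N(\sgb\psi_N^n\cdot\gb\omega_N^n),\omega_N^{n+1}\bigr)_{L^2}^{} = \bb(\psi_N^n,\omega_N^n,\omega_N^{n+1}),
   \quad
   \bigl(P_N f^n,\omega_N^{n+1}\bigr)_{L^2}^{} = (f^n,\omega_N^{n+1})_{L^2}^{},
\end{equation*}
so that using $2(\varphi-\psi,\varphi)_{L^2}=\|\varphi\|_2^2-\|\psi\|_2^2+\|\varphi-\psi\|_2^2$ gives the exact analog of \eqref{1} with $\omega^n$ replaced by $\omega_N^n$.

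The second step is to bound the nonlinear contribution. Since $\omega_N^n,\omega_N^{n+1}\in\mathcal{P}_N\subset \Hr^1_{per}$, the Wente-type estimate \eqref{q:wente2} applies without modification, and rewriting $\bb(\psi_N^n,\omega_N^n,\omega_N^{n+1})=\bb(\psi_N^n,\omega_N^{n+1},\omega_N^{n+1}-\omega_N^n)$ as in \eqref{3}, together with the Cauchy--Schwarz and Poincar\'e bound \eqref{2} for the forcing, produces the counterpart of inequality \eqref{4}:
\begin{equation*}\begin{aligned}
   \|\omega_N^{n+1}\|_2^2 - \|\omega_N^n\|_2^2 + \sfrac12\|\omega_N^{n+1}-\omega_N^n\|_2^2
      &+ \bigl(\nu-2C_w^2\dt\,\|\omega_N^n\|_2^2\bigr)\dt\,\|\omega_N^{n+1}\|_{H^1}^2\\
      &\le \frac{c_0^2}{\nu}\dt\,\|f^n\|_2^2.
\end{aligned}\end{equation*}

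Next I would run the induction on $n$ exactly as in the proof of Lemma~\ref{t:bdh}: assuming \eqref{48} holds through step $n=m$, the bound \eqref{47} with $M_0^2=\|\omega_0\|_2^2+2c_0^4\fvi^2/\nu^2$ is automatic, and hypothesis \eqref{46} then forces the coefficient $\nu-2C_w^2\dt\,\|\omega_N^n\|_2^2\ge\nu/2$. Dropping the jump term and invoking Poincar\'e gives
\begin{equation*}
   \|\omega_N^{n+1}\|_2^2 \le \alpha^{-1}\|\omega_N^n\|_2^2 + \frac{c_0^2}{\alpha\nu}\dt\,\|f^n\|_2^2,
   \qquad \alpha:=1+\frac{\nu}{2c_0^2}\dt,
\end{equation*}
and iterating over $n$ closes the induction, yielding \eqref{48} and hence \eqref{47}.

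Finally, summing the strengthened inequality (with the coefficient $\nu/2$ in front of $\dt\,\|\omega_N^{n+1}\|_{H^1}^2$) from $n=i-1$ to $n=m-1$ and discarding the jump and difference terms produces \eqref{49}. There is really no genuine obstacle here beyond verifying that $P_N$ causes no trouble; the only point requiring care is ensuring that the Wente estimate \eqref{q:wente2} is applied to the Galerkin-truncated fields, which is legitimate because $\mathcal{P}_N\subset \Hr^2_{per}$ and \eqref{q:wente2} is a continuous-level inequality independent of $N$. Consequently, all constants $M_0$, $c_0$, $C_w$ are the same as in the semi-discrete case, so the conclusion is uniform in $N$.
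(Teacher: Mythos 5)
Your proposal is correct and follows essentially the same route as the paper's own proof: testing with $2\dt\,\omega_N^{n+1}$, using the self-adjointness of $P_N$ to drop the projection, applying the Wente estimate \eqref{q:wente2} to the truncated fields, and then running the same induction and summation as in Lemma~\ref{t:bdh}. The only difference is that you make the removal of $P_N$ explicit, which the paper leaves implicit.
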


\medskip
\begin{proof}
Taking the scalar product of (\ref{Galerkin_scheme}) with $2\dt\, \omega_N^{n+1}$ we obtain
\begin{equation}\label{50}\begin{aligned}
     \|\om_N^{n+1}\|_2^2 -\|\om_N^{n}\|_2^2 &+ \|\omega_N^{n+1} - \omega_N^{n}\|_2^2
         + 2 \nu \dt \| \omega_N ^{n+1}\|^2_{H^1}\\
	&+ 2\dt\, \bb(\psi_N^n, \omega_N^n, \omega_N^{n+1})
	= 2  \dt (f^n, \omega_N ^{n+1})_{L^2}.
\end{aligned}\end{equation}
Using the Cauchy--Schwarz inequality and the Poincar\'e inequality, we have the following bound for the right-hand side of (\ref{50}):
\begin{equation}\label{51}\begin{aligned}
      2  \dt (f^n, \omega_N ^{n+1})_{L^2} \leq 2\dt \|f^n\|_2 \|\omega_N^{n+1}\|_2
       &\leq 2\dt c_0 \|f^n\|_2 \|\omega _N^{n+1}\|_{H^1}\\
       &\leq \nu \dt \|\omega _N^{n+1}\|^2_{H^1} + \frac{c_0^2}{\nu}\dt \|f^n\|_2^2,
\end{aligned}\end{equation}
whereas the nonlinear term can be bounded using the Wente type inequality \eqref{q:wente2} as
\begin{equation}\label{52}\begin{aligned}
   2\dt\, \bb(\psi_N^n, \omega_N^n,\, &\omega_N^{n+1})
	= 2\dt\, \bb(\psi_N^n,  \omega_N^{n+1}, \omega_N^{n+1}-\omega_N^{n})\\
	&\leq 2C_w \dt \|\nabla^\perp\psi_N^n\|_{H^1}  \|\omega_N^{n+1}\|_{H^1} \| \omega_N^n-\omega_N^{n+1}\|_2 \\
	&\leq \frac{1}{2} \|\omega_N^{n+1}-\omega_N^{n}\|_2^2+2C_w^2 \dt^2
\|\nabla^\perp\psi_N^n\|^2_{H^1}  \|\omega_N^{n+1}\|^2_{H^1}\\
	&\leq \frac{1}{2}
\|\omega_N^{n+1}-\omega_N^{n}\|_2^2+2C_w^2 \dt^2 \|\omega_N^n\|_2^2
\|\omega_N^{n+1}\|^2_{H^1}.
\end{aligned}\end{equation}
Relations (\ref{50})--(\ref{52}) imply
\begin{equation}\label{53}\begin{aligned}
     \|\om_N^{n+1}\|_2^2 -\|\om_N^{n}\|_2^2 +\frac{1}{2}  \|\omega_N^{n+1} - \omega _N^{n}\|_2^2
         &+ (\nu -2C_w^2 \dt \|\omega_N^n\|_2^2) \dt \| \omega_N^{n+1}\|^2_{H^1}\\
	&\leq    \frac{c_0^2}{\nu}\dt \|f^n\|_2^2.
\end{aligned}\end{equation}
By induction, one can prove that if $\dt$ satisfies \eqref{46}, then
\begin{align}
   \|\omega^n_N\|_2^2
	&\le \left(1+  \frac{\nu}{2c_0^2} \dt \right)^{-n} \|\omega_N^0\|_2^2
           +  \frac{2c_0^4}{ \nu^2} \fvi^2 \left[ 1 - \left( 1 + \frac{\nu}{2c_0^2} \dt \right)^{-n} \right] \notag\\
	&\le \left(1+  \frac{\nu}{2c_0^2} \dt \right)^{-n} \|\omega_0\|_2^2
           +  \frac{2c_0^4}{ \nu^2} \fvi^2 \left[ 1 - \left( 1 + \frac{\nu}{2c_0^2} \dt \right)^{-n} \right] \notag\\
	&\le  \|\omega_0\|_2^2 +  \frac{2c_0^4}{ \nu^2} \fvi^2
	=:M_0^2(\|\omega_0\|_2, \nu, \|f\|_{\infty}),
    \>\forall\, n\ge0,\label{54}
\end{align}
from which conclusions \eqref{47} and \eqref{48} of the Lemma follow right away.

Adding inequalities (\ref{53}) with $n$ from $i$ to $m$ and recalling the bound \eqref{47} and the time restriction \eqref{46}, we find
\begin{eqnarray}\label{11b}
     \frac{ \nu}{2} \dt \sum_{n=i}^{m} \|\omega_N^{n+1}\|^2_{H^1}
     & \leq &\|\omega_N^{i}\|_2^2 +
             \frac{c_0^2}{ \nu }\dt \sum_{n=i}^{m}\|f^n\|_2^2 \\
      &\leq& \|\omega_N^{i}\|_2^2 + \frac{c_0^2}{\nu} \fvi^2 (m-i+1) \dt,
\end{eqnarray}
which is exactly conclusion (\ref{49})  of Lemma \ref{l:bdh}. This completes the proof of the lemma.
\end{proof}

From bound \eqref{48} we can also derive the following
  \begin{cor}\label{C2}
If
\begin{equation}\label{k0}
   0 < \dt \leq \min\left\{\frac{\nu}{4C_w^2M_0^2}, \frac{2c_0^2}{\nu}
   \right\}=:k_0,
\end{equation}
then
\begin{equation}\label{58}
    \|\om_N^{n}\|_2^2\leq 2 \rho_0^2,
        \quad \forall \, n\dt  \geq T_0(\|\omega_0\|_2,\fvi)
           :=\frac{8 c_0^2}{\nu } \ln\left(\frac{\|\omega_0\|_2}{\rho_0}\right),
\end{equation}
where $\rho_0:=\frac{\sqrt{2}c_0^2}{ \nu } \fvi$.
\end{cor}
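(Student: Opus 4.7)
The plan is to mirror the proof of Corollary~\ref{C1} in the semi-discrete case, since Lemma~\ref{l:bdh} provides for the Galerkin scheme \eqref{Galerkin_scheme} precisely the same bound \eqref{q:bdv} that was used there. Specifically, I would start from inequality \eqref{48}, which gives
\begin{equation*}
   \|\om_N^{n}\|_2^2
	\le \Bigl(1+\frac{\nu}{2c_0^2}\dt\Bigr)^{-n}\|\omega_0\|_2^2 + \rho_0^2.
\end{equation*}

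Next, I would invoke the elementary inequality $1+x\ge\exp(x/2)$ valid for $x\in(0,1)$. The restriction $\dt\le k_0$ from \eqref{k0} guarantees $\frac{\nu}{2c_0^2}\dt\le 1$, so this inequality applies with $x=\frac{\nu}{2c_0^2}\dt$. Raising to the $n$-th power yields
\begin{equation*}
   \Bigl(1+\frac{\nu}{2c_0^2}\dt\Bigr)^{-n}
	\le \exp\Bigl(-\frac{n\dt\,\nu}{4c_0^2}\Bigr),
\end{equation*}
from which
\begin{equation*}
   \|\om_N^{n}\|_2^2
	\le \exp\Bigl(-\frac{n\dt\,\nu}{4c_0^2}\Bigr)\|\omega_0\|_2^2 + \rho_0^2.
\end{equation*}

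Finally, I would choose $n$ large enough to make the exponential term absorb into a second copy of $\rho_0^2$. Demanding $\exp(-n\dt\,\nu/(4c_0^2))\|\omega_0\|_2^2\le\rho_0^2$ and solving for $n\dt$ gives exactly $n\dt \ge (4c_0^2/\nu)\ln(\|\omega_0\|_2^2/\rho_0^2) = (8c_0^2/\nu)\ln(\|\omega_0\|_2/\rho_0) = T_0$. Under this condition, we conclude $\|\om_N^{n}\|_2^2\le 2\rho_0^2$ as claimed.

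Since every step reduces to a direct translation of the argument already carried out in Corollary~\ref{C1}, with the only underlying ingredient being the Galerkin absolute bound \eqref{48} that is structurally identical to \eqref{q:bdv}, I do not anticipate any genuine obstacle. The one point worth noting is that the definition of $k_0$ in \eqref{k0} already incorporates the auxiliary constraint $\dt\le 2c_0^2/\nu$ needed to apply the elementary inequality $1+x\ge\exp(x/2)$; this is the only nontrivial compatibility issue between the two time step conditions, and it is built into the statement.
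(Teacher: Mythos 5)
Your proposal is correct and is essentially the paper's own argument: the paper gives no separate proof of Corollary~\ref{C2}, simply noting that it follows from \eqref{48} by the identical computation used for Corollary~\ref{C1}, which is exactly what you carry out. All the constants and the derivation of $T_0$ check out.
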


Using Lemma \ref{dugronwall}, we can prove a result similar to Lemma \ref{5}. More precisely, we have the following:
\begin{lemma}\label{l:7}
Let $\omega_0 \in \Lr^2$ and let $\omega_N^n$ be the solution of the numerical scheme
(\ref{Galerkin_scheme}). Also,  let $\dt\leq k_0$, with $k_0$ as in Corollary
\ref{C2}.
Then there exist constants $M_1=M_1(\nu, \fvi), N=N(\|\omega_0\|_2, \nu, \fvi)$ such that
\begin{equation}\label{59}
     \| \omega_N^n\|_{H^1} \le M_1,  \quad \forall n\ge N.
\end{equation}
\end{lemma}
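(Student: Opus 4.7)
The plan is to mimic the proof of Lemma~\ref{5} step by step, exploiting the fact that the Galerkin projection $P_N$ is self-adjoint in $\Lr^2$ and that the test functions we wish to use ($\omega_N^{n+1}$ and $-\Delta\omega_N^{n+1}$) already lie in $\mathcal{P}_N$, so $P_N$ can be removed without cost. First I would take the scalar product of \eqref{Galerkin_scheme} with $-2\dt\,\Delta\omega_N^{n+1}$; since $\Delta\omega_N^{n+1}\in\mathcal{P}_N$, the projection drops out of both the forcing and the nonlinear terms, yielding the exact analog of \eqref{13} for $(\omega_N^n,\psi_N^n)$. Bounding the right-hand side by Cauchy--Schwarz exactly as in \eqref{14}, and estimating the nonlinear term by splitting $\bb(\psi_N^n,\omega_N^n,\Delta\omega_N^{n+1}) = \bb(\psi_N^n,\omega_N^n-\omega_N^{n+1},\Delta\omega_N^{n+1}) + \bb(\psi_N^n,\omega_N^{n+1},\Delta\omega_N^{n+1})$ and applying the Wente estimate \eqref{q:wente2} as in \eqref{15}, we obtain the Galerkin analog of \eqref{16}:
\begin{equation*}\begin{aligned}
   \Bigl(1-\sfrac{2C_w^2}{\nu}\|\omega_N^n\|_2^2\dt\Bigr)\|\omega_N^{n+1}\|^2_{H^1} &- \|\omega_N^n\|^2_{H^1}
	+ \sfrac12\|\omega_N^{n+1}-\omega_N^n\|_{H^1}^2\\
	&+ (\nu-2C_w^2\dt\,M_0^2)\dt\,\|\Delta\omega_N^{n+1}\|_2^2
	\le \sfrac{2}{\nu}\dt\,\|f^n\|_2^2,
\end{aligned}\end{equation*}
where Lemma~\ref{l:bdh} has been used to replace $\|\omega_N^n\|_2^2$ by $M_0^2$ in the diffusion coefficient.

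Next, I would derive the Galerkin analog of Lemma~\ref{finite}: under the time-step restriction \eqref{k0}, the factor $\alpha_N := 1 - (2C_w^2/\nu)\dt\,M_0^2 \in (1/2,1)$, and iterating the resulting inequality $\|\omega_N^{n+1}\|_{H^1}^2 \le \alpha_N^{-1}\|\omega_N^n\|_{H^1}^2 + (2/\alpha_N\nu)\dt\,\fvi^2$ yields the uniform bound
\begin{equation*}
   \|\omega_N^n\|_{H^1}^2 \le 4^{(2C_w^2/\nu)M_0^2(T_0+r)}\Bigl(\|\omega_0\|_{H^1}^2 + \frac{1}{C_w^2M_0^2}\fvi^2\Bigr),
   \quad n=1,\dots,N_0+N_r-1,
\end{equation*}
using $1-x\ge 4^{-x}$ on $(0,1/2)$ exactly as in \eqref{19}.

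Finally, to obtain the bound for $n\ge N_0+N_r$, I would apply the discrete uniform Gronwall lemma \ref{dugronwall} to the dissipative inequality above, taking $\xi_n:=\|\omega_N^n\|_{H^1}^2$, $\eta_n:=(2C_w^2/\nu)\|\omega_N^{n-1}\|_2^2$, and $\zeta_n:=(2/\nu)\fvi^2$, with $n_0:=N_0+2$ and $n_1:=N_r-2$. The three sum hypotheses in \eqref{groncond} are verified exactly as in Lemma~\ref{5}: the first two follow from Corollary~\ref{C2} (which gives $\|\omega_N^n\|_2^2\le 2\rho_0^2$ for $n\ge N_0$) and direct computation, while the third uses the cumulative $H^1$ bound \eqref{49} together with \eqref{58}. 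Invoking \eqref{ugronest} then yields $\|\omega_N^n\|_{H^1}^2 \le M_1^2(\nu,\fvi)$ for all $n\ge N:=N_0+N_r$, as claimed.

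The argument is entirely parallel to the semi-discrete case, and there is no genuine obstacle: the only point requiring care is the self-adjointness of $P_N$ and the invariance $\Delta:\mathcal{P}_N\to\mathcal{P}_N$, which ensure that the projection in \eqref{Galerkin_scheme} leaves no residual after testing against $\Delta\omega_N^{n+1}$. Once this observation is in place, every estimate and the Gronwall mechanism carry over verbatim from Lemma~\ref{5}, with the same constants $M_1$ and $N$.
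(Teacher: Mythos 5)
Your proposal is correct and follows essentially the same route as the paper: testing against $-2\dt\,\Delta\omega_N^{n+1}$ (with the projection dropping out by self-adjointness of $P_N$ and invariance of $\mathcal{P}_N$ under $\Delta$), the same splitting of the trilinear term with the Wente estimate \eqref{q:wente2} to reach the analog of \eqref{16}, and the same application of the discrete uniform Gronwall lemma with identical choices of $\xi_n$, $\eta_n$, $\zeta_n$, $n_0$, $n_1$, using \eqref{49} and \eqref{58} to verify the hypotheses. The intermediate finite-$n$ bound you include (the analog of Lemma~\ref{finite}) is not spelled out in the paper's proof of this lemma but is harmless and consistent with its derivation of \eqref{64}--\eqref{65}.
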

\begin{proof}
        Taking the scalar product of (\ref{Galerkin_scheme}) with $-2\dt \Delta  \omega_N^{n+1}$,
we obtain
\begin{eqnarray}\label{60}
    \|\omega_N^{n+1}\|^2_{H^1} - \|\omega_N^{n}\|^2_{H^1}
     & +& \|\omega_N^{n+1} - \omega_N^{n}\|^2_{H^1} + 2 \nu \dt \|\Delta \omega_N^{n+1}\|_2^2 \nonumber\\
     & - &2  \dt \bb(\psi_N^n, \omega_N^n,\Delta  \omega_N^{n+1}) = -2\dt(f^n,\Delta  \omega_N^{n+1})_{L^2}.
\end{eqnarray}
Using the Cauchy--Schwarz inequality, we bound the right-hand side of~(\ref{60}) as
\begin{equation}\label{61}
- 2 \dt (f^n,\Delta  \omega_N^{n+1})_{L^2}
    \leq 2\dt \|f^n\|_2 \|\Delta  \omega_N^{n+1}\|_2 \leq \frac{\nu}{2} \dt \|\Delta  \omega_N^{n+1}\|_2^2 + \frac{2}{\nu} \dt \|f ^n\|_2^2.
\end{equation}
Using
the Wente type estimate (\ref{q:wente2}), the nonlinear term can be bounded as
\begin{eqnarray}\label{62}
2\dt \bb(\psi_N^n, \omega_N^n,\Delta  \omega_N^{n+1})  &=& 2\dt
\bb(\psi_N^n,   \omega_N^n-\omega_N^{n+1}, \Delta  \omega_N^{n+1})
+ 2\dt \bb(\psi_N^n,   \omega_N^{n+1}, \Delta  \omega_N^{n+1}) \nonumber\\
&  \leq& 2C_w \dt \|\nabla^\perp\psi_N^n\|_{H^1} \| \omega_N^{n+1}-\omega_N^{n}\|_{H^1} \|\Delta  \omega_N^{n+1}\|_2 \nonumber\\
&&+ 2C_w \dt \|\nabla^\perp\psi_N^n\|_{H^1} \| \omega_N^{n+1}\|_{H^1} \|\Delta  \omega_N^{n+1}\|_2 \nonumber\\
&\leq &\frac{1}{2} \|\omega_N^{n+1}-\omega_N^{n}\|^2_{H^1}+2C_w^2 \dt^2 \|\nabla^\perp\psi_N^n\|^2_{H^1}  \|\Delta  \omega_N^{n+1}\|_2^2\nonumber\\
&&+\frac{\nu}{2} \dt \|\Delta  \omega_N^{n+1}\|_2^2 + \frac{2C_w^2}{\nu} \dt
\|\nabla^\perp\psi_N^n\|^2_{H^1} \| \omega_N^{n+1}\|^2_{H^1} \nonumber\\
&\leq &\frac{1}{2} \|\omega_N^{n+1}-\omega_N^{n}\|^2_{H^1}+2C_w^2 \dt^2 \|\omega_N^n\|_2^2  \|\Delta  \omega_N^{n+1}\|_2^2\nonumber\\
&&+\frac{\nu}{2} \dt \|\Delta  \omega_N^{n+1}\|_2^2 + \frac{2C_w^2}{\nu} \dt
\|\omega_N^n\|_2^2 \| \omega_N^{n+1}\|^2_{H^1}.
\end{eqnarray}
Relations (\ref{60})--(\ref{62}) imply
\begin{equation}\label{63}\begin{aligned}
\left(1-\frac{2C_w^2}{\nu} \| \omega_N^{n}\|_2^2 \dt \right)&\|\omega_N^{n+1}\|^2_{H^1} - \|\omega_N^{n}\|^2_{H^1}
      + \frac{1}{2}\|\omega_N^{n+1} - \omega_N^{n}\|^2_{H^1}\\
	&+ \left( \nu -2C_w^2 \dt M_0^2\right)\dt \|\Delta \omega_N^{n+1}\|_2^2
	\leq \frac{2}{\nu} \dt \|f^n\|_2^2,
\end{aligned}\end{equation}
from which we find
\begin{equation}\label{64}
      \| \omega_N^{n+1}\|^2_{H^1} \leq \frac{1}{\alpha}\|\omega_N^{n}\|^2_{H^1}
         + \frac{2}{\alpha \nu} \dt \fvi^2,
\end{equation}
where
\begin{equation}\label{65}
      \alpha = 1-\frac{2C_w^2}{\nu} \dt M_0^2 >0.
\end{equation}

Now let $N_0= \lfloor T_0/\dt \rfloor$, with $T_0$ being given in
Corollary \ref{C2}, and for $r\ge 8 c_0^2/\nu$  arbitrarily fixed, let $N_r= \lfloor r/\dt \rfloor$. We are going to  apply Lemma
\ref{dugronwall} to \eqref{63}, with $\xi_n=\| \omega_N^{n}\|^2_{H^1}$,
$\eta_n=2C_w^2 \| \omega_N^{n-1}\|_2^2/\nu$, $\zeta_n=2\fvi^2/ \nu$,
$n_0=N_0+2$, $n_1=N_r-2$. For $n_2 \ge n_0$, we compute (taking into
account that, by \eqref{58}, $\| \omega_N^{n}\|_2^2 \leq 2\rho_0^2$,
for $n\geq N_0$):
\begin{eqnarray}
  &\dt& \sum_{n=n_2}^{n_2+n_1+1} \eta_n =  \dt \sum_{n=n_2}^{n_2+n_1+1} \frac{2C_w^2 }{\nu}\| \omega_N^{n-1}\|_2^2\le \frac{4C_w^2 }{\nu}\rho_0^2r:=a_1, \\
  &\dt& \sum_{n=n_2}^{n_2+n_1+1}  \zeta_n = \dt \sum_{n=n_2}^{n_2+n_1+1} \frac{2}{\nu}\fvi^2\leq\frac{2}{\nu}\fvi^2r:=a_2,\\
  &\dt& \sum_{n=n_2}^{n_2+n_1+1}  \xi_n=\dt \sum_{n=n_2}^{n_2+n_1+1}\| \omega_N^{n}\|^2_{H^1} \quad (\text{by } \eqref{49})\\
  && \quad \quad \quad \quad \quad \leq \frac{2}{\nu}\left(\|\omega_N^{n_2-1}\|_2^2 + \frac{c_0^2}{\nu} \fvi^2 (n_1+2) \dt\right) \quad (\text{by } \eqref{58})\\
  && \quad \quad \quad \quad \quad \leq \frac{2}{\nu}\left[2\rho_0^2 + \frac{c_0^2}{\nu} \fvi^2 r\right]=:a_3.
\end{eqnarray}
By \eqref{ugronest}, we obtain
\begin{eqnarray}
\|\omega_N^{n}\|^2_{H^1} &\leq& \left[ \frac{4}{\nu}\left(\frac{2\rho_0^2}{r} + \frac{1}{\nu\lb} \fvi^2 \right)+\frac{2}{\nu}\fvi^2r\right] \exp\left( \frac{16C_w^2 }{\nu}\rho_0^2r\right)\\
&=&:M_1^2(\nu, \fvi), \quad \forall n \geq N_0+N_r.
\end{eqnarray}
Taking $N=N_0+N_r$, we obtain conclusion \eqref{59} of Lemma \ref{l:7}.
\end{proof}
  \section{Collocation Fourier spectral approximation}
    Here we consider the collocation Fourier spectral spatial approximation of the scheme (\ref{scheme}). In order to maintain the long time stability of the fully discretized scheme, a common technique of using a modified form of the nonlinear term is utilized (see for instance \cite{T1966}). Moreover, we will use an alternative approach for the nonlinear analysis: instead of applying the Wente type estimate, we will use $\| \nabla \psi\|_{L^\infty}$, which is in turn bounded by $\|\psi\|_{H^3}^\epsilon \|\psi\|_{H^2}^{1-\epsilon}, \forall \epsilon \in (0, 1)$.  This alternative approach leads to a slightly more restrictive time step restriction for stability, but has the advantage of easy adaptance to the fully discrete collocation Fourier approximation.

\subsection{Fourier collocation spectral differentiation}

Consider a 2-D domain $\Omega=(0,L_x) \times (0,L_y)$.
For simplicity of presentation we assume that $L_x=L_y=L_0=1$ and
$L_x = N_x \cdot h_x$, $L_y=N_y \cdot h_y$ for some mesh sizes $h_x=h_y = h>0$ and some positive integers $N_x= N_y = 2N+1$.
All variables are evaluated at the regular numerical grid $(x_i, y_j)$,
with $x_i = i h$, $y_j=jh$, $0 \le i , j \le N$.

For a periodic function $f$ over the given 2-D numerical grid,
assume its discrete Fourier expansion is given by
\begin{equation}
  f_{i,j} = \sum_{k_1,l_1=-[N/2]}^{[N/2]}
   (\hat{f}_c^N)_{k_1,l_1} {\rm e}^ {2 \pi {\rm i} (k_1 x_i + l_1 y_j ) } .
   \label{spectral-coll-1}
\end{equation}
Note that $\hat{f}_c^N$ may not be the regular Fourier coefficients, due to the
aliasing error. In turn, its collocation interpolation operator becomes
\begin{eqnarray}
    {\cal I}_N f (\x)  = \sum_{k_1,l_1=-N}^{N}
   (\hat{f}_c^N)_{k_1,l_1}  {\rm e}^ {2 \pi {\rm i} (k_1 x + l_1 y ) } .
    \label{spectral-coll-projection-3}
\end{eqnarray}
As a result, its collocation Fourier spectral approximations to first and second order
partial derivatives (in $x$ direction) are given by
\begin{eqnarray}
  \left( {\cal D}_{Nx} f \right)_{i,j} = \sum_{k_1,l_1=-N}^{N}
   \left( 2 k_1 \pi {\rm i} \right) (\hat{f}_c^N)_{k_1,l_1}
   {\rm e}^ {2 \pi {\rm i} (k_1 x_i + l_1 y_j ) }  ,   \label{spectral-coll-2-1}
\\
  \left( {\cal D}_{Nx}^2 f \right)_{i,j} = \sum_{k_1,l_1=-[N/2]}^{[N/2]}
   \left( - 4 \pi^2 k_1^2 \right) \hat{f}_{k_1,l_1}
   {\rm e}^ {2 \pi {\rm i} (k_1 x_i + l_1 y_j ) }  . \label{spectral-coll-2-3}
\end{eqnarray}
The corresponding collocation spectral differentiations in $y$ directions can be
defined in the same way.
In turn, the discrete Laplacian, gradient and divergence can be denoted as
\begin{eqnarray}
  \Delta_N f =  \left( {\cal D}_{Nx}^2  + {\cal D}_{Ny}^2 \right) f ,
  \quad \nabla_N f = \left(  \begin{array}{c}
  {\cal D}_{Nx} f  \\
  {\cal D}_{Ny} f
  \end{array}  \right)  ,  \quad
  \nabla_N \cdot \left(  \begin{array}{c}
  f _1 \\
  f _2
  \end{array}  \right)  = {\cal D}_{Nx} f_1 + {\cal D}_{Ny} f_2 ,  \label{spectral-coll-3}
\end{eqnarray}
at the point-wise level.

  Moreover, given any periodic grid functions $f$ and $g$ (over the 2-D numerical grid),
the spectral approximations to the $L^2$ inner product and $L^2$ norm are introduced as
\begin{eqnarray}
  \left\| f \right\|_2 = \sqrt{ \left\langle f , f \right\rangle } ,  \quad \mbox{with} \quad
  \left\langle f , g \right\rangle  = h^2 \sum_{i,j=0}^{2N}   f_{i,j} g_{i,j} .
  \label{spectral-coll-inner product-1}
\end{eqnarray}
Meanwhile, such a discrete $L^2$ inner product can also be viewed in the Fourier space other than in physical space, with the help of Parseval equality:
\begin{equation}
   \left\langle f , g \right\rangle
   =   \sum_{k_1,l_1=-N}^{N}
   (\hat{f}_c^N)_{k_1,l_1}  \overline{(\hat{g}_c^N)_{k_1,l_1}}
   =   \sum_{k_1,l_1=-N}^{N}
   (\hat{g}_c^N)_{k_1,l_1}  \overline{(\hat{f}_c^N)_{k_1,l_1}}  ,
   \label{spectral-coll-inner product-2}
\end{equation}
in which $(\hat{f}_c^N)_{k_1,l_1}$, $(\hat{g}_c^N)_{k_1,l_1}$ are the Fourier
interpolation coefficients of the grid functions $f$ and  $g$ in the expansion as in
(\ref{spectral-coll-1}). Furthermore, a detailed calculation shows that the following
formulas of summation by parts are also valid at the discrete level:
\begin{eqnarray}
  \left\langle f ,  \nabla_N \cdot \left(  \begin{array}{c}
  g_1 \\
  g_2
  \end{array}  \right)  \right\rangle  = - \left\langle \nabla_N f ,  \left(  \begin{array}{c}
  g_1 \\
  g_2
  \end{array}  \right)  \right\rangle ,   \qquad
  \left\langle f ,  \Delta_N  g  \right\rangle
  = - \left\langle \nabla_N f ,  \nabla_N g   \right\rangle  .
  \label{spectral-coll-inner product-3}
\end{eqnarray}

\subsubsection{A preliminary estimate in Fourier collocation spectral space}

It is well-known that the existence of aliasing error in the nonlinear term
poses a serious challenge in the numerical analysis of Fourier collocation
spectral scheme.
To overcome a key difficulty associated with the $H^m$ bound of the nonlinear term
obtained by collocation interpolation, the following lemma is introduced.
The result is cited from a recent work \cite{GW11a}, and the detailed proof is skipped.

\begin{lemma}\label{spectral-coll-projection-4}
For any $\varphi \in P^{2N}$ in dimension $d$, we have
\begin{equation}
  \left\| {\cal I}_N \varphi \right\|_{H^k}
  \le  \left( \sqrt{2} \right)^d  \left\|  \varphi \right\|_{H^k} .
\end{equation}
\end{lemma}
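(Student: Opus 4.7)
The plan is to pass through the explicit aliasing formula that relates the discrete interpolation coefficients $(\hat\varphi_c^N)_{\mathbf{k}}$ to the ordinary Fourier coefficients of $\varphi\in P^{2N}$. Since $\varphi$ is a trigonometric polynomial with frequencies $\mathbf{k}'\in\{-2N,\ldots,2N\}^d$, I would first write $\varphi(\mathbf{x})=\sum_{\mathbf{k}'}\hat\varphi_{\mathbf{k}'}\,\mathrm{e}^{2\pi\mathrm{i}\mathbf{k}'\cdot\mathbf{x}}$ and recall that $\mathcal{I}_N\varphi$ must agree with $\varphi$ at all $(2N+1)^d$ collocation nodes. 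Using $\mathrm{e}^{2\pi\mathrm{i}(2N+1)x_j}=1$ at every node, the standard discrete orthogonality identity gives
\[
   (\hat\varphi_c^N)_{\mathbf{k}}
   \;=\;\sum_{\mathbf{m}\in\mathbb{Z}^d:\ \mathbf{k}+(2N+1)\mathbf{m}\in[-2N,2N]^d}\hat\varphi_{\mathbf{k}+(2N+1)\mathbf{m}}.
\]
A coordinate-by-coordinate check shows that for $|k_j|\le N$ only $m_j\in\{-1,0,1\}$ can give an index inside $[-2N,2N]$, and moreover at most two of these three values are compatible with $|k_j+(2N+1)m_j|\le 2N$; thus the sum above has no more than $2^d$ non-zero terms.

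The next step is to apply the elementary inequality $|a_1+\cdots+a_s|^2\le s\sum_j|a_j|^2$ with $s\le 2^d$ to obtain
\[
   |(\hat\varphi_c^N)_{\mathbf{k}}|^2\;\le\;2^d\!\!\sum_{\mathbf{k}'\ \text{aliases to}\ \mathbf{k}}\!|\hat\varphi_{\mathbf{k}'}|^2.
\]
The decisive observation is the componentwise monotonicity $|k'_j|\ge|k_j|$: either $m_j=0$ giving equality, or else $m_j=\pm1$ in which case $|k'_j|\ge N+1>|k_j|$. Consequently $|\mathbf{k}|\le|\mathbf{k}'|$ for every alias $\mathbf{k}'$ of $\mathbf{k}$, so every Sobolev weight satisfies $(1+|\mathbf{k}|^2)^k\le(1+|\mathbf{k}'|^2)^k$ for $k\ge 0$.

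With these two ingredients, the conclusion is a routine Parseval computation: multiply the coefficient bound by $(1+|\mathbf{k}|^2)^k$, sum over $\mathbf{k}\in[-N,N]^d$, swap the order of summation (each $\mathbf{k}'\in[-2N,2N]^d$ maps to a unique alias in $[-N,N]^d$), and invoke Parseval on both sides to reach $\|\mathcal{I}_N\varphi\|_{H^k}^2\le 2^d\|\varphi\|_{H^k}^2$, which is the claim $(\sqrt 2)^{2d}=2^d$. The only step that requires genuine care is the combinatorial count of aliases together with the monotonicity $|k_j|\le|k'_j|$, which is where the dimension-$d$ factor $(\sqrt 2)^d$ enters; everything else is bookkeeping on the Fourier side.
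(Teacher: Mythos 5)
Your argument is correct and complete. Note that the paper itself gives no proof of this lemma: it cites the external reference \cite{GW11a} and explicitly skips the details, so there is no in-paper argument to compare against. Your route --- the aliasing identity $(\hat\varphi_c^N)_{\mathbf{k}}=\sum_{\mathbf{m}}\hat\varphi_{\mathbf{k}+(2N+1)\mathbf{m}}$, the coordinatewise count showing that for $|k_j|\le N$ only $m_j=0$ together with at most one of $m_j=\pm1$ can land in $[-2N,2N]$, the componentwise monotonicity $|k_j'|\ge|k_j|$ of the surviving aliases, and then Cauchy--Schwarz plus Parseval --- is exactly the mechanism the paper alludes to when it says the lemma ``sharpens the constant to $\sqrt2^{\,d}$'' from E's $3^d$: the improvement is precisely your observation that at most two (not three) of the candidate shifts per direction are admissible. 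The only points worth stating explicitly in a written-up version are that the case $k_j=0$ admits only $m_j=0$ (so the count $2^d$ is an upper bound, not always attained) and that the weight monotonicity $(1+|\mathbf{k}|^2)^k\le(1+|\mathbf{k}'|^2)^k$ requires $k\ge0$, which covers the cases $k=0,1$ actually used in the paper.
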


In fact, an estimate for the $k=0$ case was reported in E's work
\cite{WE1992, WE1993}, with the constant given by $3^d$, while this lemma
sharpens the constant to $\sqrt{2}^d$. The case with $k > \frac{d}{2} = 1$
was covered in a classical approximation estimate for spectral expansions and
interpolations in Sobolev spaces, reported by Canuto and Quarteroni \cite{CQ1982}.
However,
due to the additional regularity requirement for interpolation operator  analysis,
the case of $k=1$ was not covered in any existing literature, which we require
for the $H^1$ bound of the nonlinear expansion in the global in time analysis.

\subsection{The first order semi-implicit scheme}

The fully discrete pseudo-spectral scheme follows the semi-implicit idea of
(\ref{scheme}) and (\ref{Galerkin_scheme}):

\begin{eqnarray}
  \frac{\omega^{n+1} - \omega^n}{\dt} + \frac12 \left( \u^n \DOT \nabla_N \omega^n
  + \nabla_N \cdot \left( \u^n \omega^n \right) \right)
  = \nu \Delta_N \omega^{n+1} + \f^n,   \label{scheme-coll-1st-1}
\\
  - \Delta_N \psi^{n+1} = \omega^{n+1} ,   \label{scheme-coll-1st-2}
\\
  \u^{n+1} = \nabla_N^{\bot} \psi^{n+1}
  = \left( {\cal D}_{Ny} \psi^{n+1} , {\cal D}_{Nx} \psi^{n+1} \right) .  \label{scheme-coll-1st-3}
\end{eqnarray}

  It is observed that the numerical velocity $\u^{n+1} = \nabla_N^{\bot} \psi^{n+1}$
is automatically divergence-free:
\begin{eqnarray}
  \nabla_N \cdot \u = {\cal D}_{Nx} u + {\cal D}_{Ny} v
  = - {\cal D}_{Nx} ( {\cal D}_{Ny} \psi) + {\cal D}_{Ny} ( {\cal D}_{Nx} \psi)
  =0 ,  \label{scheme-coll-1st-4}
\end{eqnarray}
at any time step. Meanwhile, note that the nonlinear term is a spectral approximation to
$\frac12 \u^n \DOT \nabla \omega$ and $\frac12 \nabla \cdot \left( \u \omega \right)$
at time step $t^n$. Furthermore, a careful application of summation by parts formula
(\ref{spectral-coll-inner product-3}) gives
\begin{eqnarray}
  \left\langle  \omega ,  \u \DOT \nabla_N \omega
  + \nabla_N \cdot \left( \u \omega \right)   \right\rangle
   = \left\langle  \omega ,  \u \DOT \nabla_N \omega  \right\rangle
  - \left\langle \nabla_N \omega , \u \omega   \right\rangle
  = 0  .  \label{scheme-coll-1st-5}
\end{eqnarray}
In other words, the nonlinear convection term appearing in the numerical scheme
(\ref{scheme-coll-1st-1}), so-called skew symmetric form, makes the nonlinear term
orthogonal to the vorticity field in the $L^2$ space, without considering the temporal
discretization. This property is crucial in the stability analysis for the Fourier
collocation spectral scheme (\ref{scheme-coll-1st-1})-(\ref{scheme-coll-1st-3}).

In addition, we denote $\U^n= (U^n, V^n)$,
$\pomega^n$ and $\ppsi^n$ as the continuous versions of
$\u^n$, $\omega^n$ and $\psi^n$,
respectively, with the formula given by (\ref{spectral-coll-projection-3}).
It is clear that $\U^n, \pomega^n, \ppsi^n \in P^N$
and the kinematic equation $\Delta \ppsi^n = \pomega^n$, $\U^n = \nabla^{\bot} \ppsi^n$
is satisfied at the continuous level. Because of these kinematic equations,
an application of elliptic regularity shows that
\begin{eqnarray}
  \left\| \ppsi^n \right\|_{H^{m+2}}  \le  C  \left\| \pomega^n \right\|_{H^m} ,  \quad
  \left\| \ppsi^n \right\|_{H^{m+2+\alpha}}
  \le  C  \left\| \pomega^n \right\|_{H^{m+\alpha}}  ,
  \label{est-coll-prelim-1}
\end{eqnarray}
in which we used the fact that all profiles have mean zero over the domain:
\begin{eqnarray}
   \overline{\ppsi^n} = 0 ,  \quad
  \overline{\U^n} = \left( - \overline{ \partial_y \ppsi^n} ,
  \overline{ \partial_x \ppsi^n} \right) = 0 ,  \quad
  \overline{\pomega^n} = \overline{\Delta \ppsi^n} = 0 .
  \label{est-coll-prelim-2}
\end{eqnarray}
Moreover, it is clear that the Poincar\'e inequality and elliptic regularity can
be applied because of this property.

 \begin{lemma}\label{collocation}
 Let $\omega_0 \in \Lr^2$ and let $\omega^n$ be the solution of the numerical scheme (\ref{scheme-coll-1st-1})-(\ref{scheme-coll-1st-3}). Also, let  $f \in L^\infty(\Real_+;H)$
 and set $\|f\|_{\infty} := \|f\|_{L^\infty(\Real+;H)}$.
  Then there exists $M_0 = M_0(\|\omega_0\|_2, \nu, \|f\|_{\infty})$ such that if
  \begin{equation} \label{constraint-coll-dt}
  \dt \leq \frac{\nu}{4C_w^2M_0^2},
  \end{equation}
   then
\begin{equation}\label{coll-est-L2-1}
   \|\pomega^n \|_{H^1} \leq M_0, \, \forall \, n \geq 0,
\end{equation}
\begin{equation}\label{coll-est-L2-2}
   \|\omega^n \|_{H^1}^2 \le \left(1+  \frac{\nu }{2c_0^2} \dt \right)^{-n} \|\pomega_0\|_{H^1}^2
           +  \frac{2c_0^4}{ \nu^2} \fvi^2 \left[ 1 - \left( 1 + \frac{\nu}{2c_0^2} \dt \right)^{-n} \right]
                ,
    \>\forall\, n\ge0,
\end{equation}
and
 \begin{eqnarray}\label{coll-est-L2-3}
     \frac{ \nu}{2} \dt \sum_{n=i}^{m} \|\pomega^{n}\|^2_{H^2}
     \leq \|\pomega_N^{i-1}\|_{H^1}^2 + \frac{c_0^2}{\nu} \fvi^2 (m-i+1) \dt, \,
             \quad \forall \, i=1, \cdots, m.
\end{eqnarray}
\end{lemma}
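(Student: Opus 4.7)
The plan is to follow, one derivative higher, the induction argument of Lemma~\ref{l:bdh}. I take the discrete $L^2$ inner product of (\ref{scheme-coll-1st-1}) with $-2\dt\,\Delta_N\omega^{n+1}$, apply the summation-by-parts formulas (\ref{spectral-coll-inner product-3}) together with Parseval (\ref{spectral-coll-inner product-2}) to identify quadratic terms with the continuous $H^1$ and $H^2$ norms of $\pomega^{n+1}$, and so obtain
\[
  \|\pomega^{n+1}\|_{H^1}^2 - \|\pomega^n\|_{H^1}^2 + \|\pomega^{n+1}-\pomega^n\|_{H^1}^2 + 2\nu\dt\,\|\pomega^{n+1}\|_{H^2}^2 = \mathcal N_n + \mathcal F_n.
\]
The forcing contribution $\mathcal F_n = -2\dt\,\langle f^n,\Delta_N\omega^{n+1}\rangle$ is handled exactly as in (\ref{61}) by Cauchy--Schwarz and Young, yielding $\tfrac{\nu}{2}\dt\,\|\pomega^{n+1}\|_{H^2}^2 + \tfrac{2}{\nu}\dt\,\|f^n\|_2^2$ absorbed into the viscous dissipation.

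The main obstacle is the nonlinear contribution $\mathcal N_n$. Since the test function is $-\Delta_N\omega^{n+1}$ rather than $\omega^{n+1}$, the skew-symmetry (\ref{scheme-coll-1st-5}) does not give immediate cancellation. My plan is first to pass from the collocation products to their continuous Fourier images, which costs only a factor bounded by $(\sqrt{2})^2$ via Lemma~\ref{spectral-coll-projection-4}, reducing $\mathcal N_n$ to $\pm 2\dt\,b(\ppsi^n,\pomega^n,\Delta\pomega^{n+1})$ up to controlled aliasing. Following the strategy announced at the start of \S4, I then split $\pomega^n = \pomega^{n+1} - (\pomega^{n+1}-\pomega^n)$ and bound the two resulting trilinear pieces via $\|\nabla\ppsi^n\|_{L^\infty}$ rather than the Wente inequality, using the interpolation $\|\nabla\ppsi^n\|_{L^\infty} \le C\|\ppsi^n\|_{H^3}^\epsilon\|\ppsi^n\|_{H^2}^{1-\epsilon}$, which by elliptic regularity (\ref{est-coll-prelim-1}) and Poincar\'e is dominated by $C\|\pomega^n\|_{H^1}$. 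Applying Young's inequality in the pattern of (\ref{62}) produces the familiar four-term split: a $\tfrac12\|\pomega^{n+1}-\pomega^n\|_{H^1}^2$ absorbed into the time-difference, two viscous pieces $2C^2\dt^2\|\pomega^n\|_{H^1}^2\,\|\pomega^{n+1}\|_{H^2}^2 + \tfrac{\nu\dt}{2}\|\pomega^{n+1}\|_{H^2}^2$ absorbed into the $H^2$ dissipation, and a residual $\tfrac{2C^2\dt}{\nu}\|\pomega^n\|_{H^1}^2\,\|\pomega^{n+1}\|_{H^1}^2$ to be moved to the left-hand side.

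Gathering these estimates yields the collocation $H^1$ analog of (\ref{4}):
\[
  \bigl(1 - \tfrac{2C^2\dt}{\nu}\|\pomega^n\|_{H^1}^2\bigr)\|\pomega^{n+1}\|_{H^1}^2 - \|\pomega^n\|_{H^1}^2 + \tfrac12\|\pomega^{n+1}-\pomega^n\|_{H^1}^2 + \bigl(\nu - 2C^2\dt\,M_0^2\bigr)\dt\,\|\pomega^{n+1}\|_{H^2}^2 \le \tfrac{c_0^2}{\nu}\dt\,\|f^n\|_2^2,
\]
valid inductively as long as $\|\pomega^n\|_{H^1}\le M_0$, with both bracketed coefficients bounded below by $1/2$ and $\nu/2$ respectively under the time-step restriction (\ref{constraint-coll-dt}). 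Applying Poincar\'e $\|\pomega^{n+1}\|_{H^1}^2 \le c_0^2\,\|\pomega^{n+1}\|_{H^2}^2$ to siphon part of the viscous dissipation into damping of $\|\pomega^{n+1}\|_{H^1}^2$ produces the geometric recursion $\alpha\,\|\pomega^{n+1}\|_{H^1}^2 \le \|\pomega^n\|_{H^1}^2 + (c_0^2/\nu)\dt\,\|f^n\|_2^2$ with $\alpha = 1 + \nu\dt/(2c_0^2)$, and the induction of Lemma~\ref{l:bdh} transplants verbatim to give (\ref{coll-est-L2-2}) and hence (\ref{coll-est-L2-1}); summing the base inequality from $n=i$ to $m$ and discarding the squared-difference term delivers (\ref{coll-est-L2-3}). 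The trickiest bookkeeping is threading the aliasing constants of Lemma~\ref{spectral-coll-projection-4} through the skew-symmetric nonlinearity and choosing $\epsilon$ in the interpolation so that the resulting quadratic-in-$\|\pomega^n\|_{H^1}$ dependence closes under (\ref{constraint-coll-dt}).
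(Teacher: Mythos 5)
Your plan has a genuine gap at its core step: the single-tier induction on $\|\pomega^n\|_{H^1}\le M_0$ does not close. When you test with $-2\dt\,\Delta_N\omega^{n+1}$, the piece of the nonlinearity involving $\pomega^{n+1}$ itself, i.e.\ essentially $2\dt\,b(\ppsi^n,\pomega^{n+1},\Delta\pomega^{n+1})$, does \emph{not} cancel (the skew-symmetry only helps against the test function $\omega^{n+1}$), and your Young split turns it into the residual $\tfrac{C\dt}{\nu}\|\pomega^n\|_{H^1}^2\|\pomega^{n+1}\|_{H^1}^2$. This is an $O(\dt)$ antidamping term, not $O(\dt^2)$: after moving it to the left the coefficient of $\|\pomega^{n+1}\|_{H^1}^2$ becomes $1-\tfrac{2C^2M_0^2}{\nu}\dt+\tfrac{\nu}{2c_0^2}\dt$, and this exceeds $1+\tfrac{\nu}{2c_0^2}\dt$ for no choice of $\dt$; it even exceeds $1$ only if $\nu^2\gtrsim c_0^2C^2M_0^2$, a large-viscosity/small-data condition that cannot be bought by shrinking $\dt$. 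So the "geometric recursion with $\alpha=1+\nu\dt/(2c_0^2)$" you assert is not what your estimates produce, and the induction hypothesis $\|\pomega^{n+1}\|_{H^1}\le M_0$ is not recovered. (Compare the Galerkin case: the paper's Lemma~\ref{l:7} faces the same term, accepts the resulting growth on a finite horizon, and must then invoke the discrete uniform Gronwall lemma together with the $L^2$ absorbing ball.)

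The paper's proof of Lemma~\ref{collocation} avoids this with a two-tier bootstrap that your proposal omits. First it proves a uniform-in-time $L^2$ bound $C_6$ by testing with $2\dt\,\omega^{n+1}$, where the exact discrete cancellation \eqref{est-coll-L2-5} kills the dangerous $\omega^{n+1}$-part and only the time-difference part survives (controlled under an a-priori $H^\delta$ bound, later recovered by interpolation). Only then does it do the $H^1$ estimate, and there the non-cancelling term is \emph{not} moved to the left as antidamping: using the already-established $L^2$ bound and interpolation, $\|\u^{n+1}\|_\infty\le CC_6^{1-\delta/2}\|\Delta_N\omega^{n+1}\|_2^{\delta/2}$ and $\|\nabla_N\omega^{n+1}\|_2\le CC_6^{1/2}\|\Delta_N\omega^{n+1}\|_2^{1/2}$, so the whole term is $\lesssim C_6^{3/2}\dt\,\|\Delta_N\omega^{n+1}\|_2^{(3+\delta)/2}$ with exponent strictly below $2$; Young's inequality then absorbs it into $\tfrac{\nu}{2}\dt\|\Delta_N\omega^{n+1}\|_2^2$ plus an additive constant $C_{13}\dt$, leaving a genuinely dissipative $H^1$ recursion \eqref{est-coll-H1-20}. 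Without the prior $L^2$ bound this interpolation is unavailable, which is why your shortcut fails. A secondary caveat: Lemma~\ref{spectral-coll-projection-4} gives norm bounds on interpolants of products in $P^{2N}$; it does not let you "reduce" the discrete pairing to the continuous trilinear form $b(\ppsi^n,\pomega^n,\Delta\pomega^{n+1})$ up to a small error, so the conservative piece $\nabla_N\cdot(\u^n\omega^{n+1})$ must be estimated term by term as in \eqref{est-coll-L2-6-3}--\eqref{est-coll-L2-6-9} rather than expanded using discrete incompressibility.
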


The proof of this lemma is organized as follows. First, an $H^\delta$ a-priori
assumption for the numerical solution $\omega^n$ is made. In turn, this assumption
leads to a global in time $L^2$ bound, with a standard application of
Sobolev embedding and
H\"older's inequality. However, this $L^2$ bound is not sufficient to recover the
a-priori assumption, due to the fact that the Wente type analysis is not
available for the collocation spectral approximation. Instead, a global in time
$H^1$ stability can also be derived with the help of the leading $L^2$ bound.
Moreover, both the global in time $L^2$ and $H^1$ bound constants are independent
of the a-priori constant $\tilde{C}_1$. As a result, the a-priori assumption can be
recovered so that an induction can be applied to established the above lemma.

\subsection{Leading estimate: $L^\infty (0,T; L^2) \cap L^2 (0,T; H^1)$
estimate for $\omega$}

  Assume a-priori that
\begin{equation}
  \| \pomega^n \|_{H^\delta} \le \tilde{C}_1 ,  \quad
  \mbox{$\pomega^n$ is the continuous version of $\omega^n$} , \label{est-coll-a priori}
\end{equation}
for some $\delta > 0$ at time step $t^n$. Note that
$\tilde{C}_1$ is a global constant in time.
We are going to prove that such a bound
for the numerical solution is also available at time step $t^{n+1}$.

Taking the discrete inner product of (\ref{scheme-coll-1st-1}) with $2 \dt \omega^{n+1}$ gives
\begin{eqnarray}
  & &
  \|  \omega^{n+1}  \|_2^2  -  \|  \omega^n  \|_2^2
  +  \|  \omega^{n+1}  -  \omega^n  \|_2^2
  +  2 \nu \dt \|  \nabla_N \omega^{n+1}  \|_2^2    \nonumber
\\
  & &
  = - \dt \left\langle \u^n \DOT \nabla_N \omega^n
  + \nabla_N \cdot \left( \u^n \omega^n \right) , \omega^{n+1} \right\rangle
  + 2 \dt \left\langle  \f^n , \omega^{n+1} \right\rangle  ,  \label{est-coll-L2-1}
\end{eqnarray}
in which the summation by parts formula (\ref{spectral-coll-inner product-3}) was
applied to the diffusion term.
A bound for the outer force term is straightforward:
\begin{eqnarray}
  2 \left\langle  \f^n , \omega^{n+1} \right\rangle
  &\le& 2 \left\| f^n \right\|_2  \cdot \left\| \omega^{n+1} \right\|_2
  \le 2 C_2 \left\| f^n \right\|_2  \cdot \left\| \nabla_N \omega^{n+1} \right\|_2   \nonumber
\\
  &\le&
  \frac{\nu}2  \left\| \nabla_N \omega^{n+1} \right\|_2^2
  + \frac{2 C_2^2}{\nu} \left\| f^n \right\|_2^2
  \le   \frac{\nu}2  \left\| \nabla_N \omega^{n+1} \right\|_2^2 + \frac{2 C_2^2 M^2}{\nu}  ,
  \label{est-coll-L2-2}
\end{eqnarray}
in which a Poincar\'e inequality
\begin{equation}
    \left\| \omega^{n+1} \right\|_2  \le C_2    \left\| \nabla_N \omega^{n+1} \right\|_2 ,
    \label{est-coll-L2-3}
\end{equation}
was used in the third step. For the nonlinear term, we start with the following
rewritten form:
\begin{eqnarray}
  & &
   - \dt \left\langle \u^n \DOT \nabla_N \omega^n
  + \nabla_N \cdot \left( \u^n \omega^n \right) , \omega^{n+1} \right\rangle  \nonumber
\\
   &=&
     - \dt \left\langle \u^n \DOT \nabla_N \omega^{n+1}
  + \nabla_N \cdot \left( \u^n \omega^{n+1} \right) , \omega^{n+1} \right\rangle  \nonumber
\\
  & &
     + \dt \left\langle \u^n \DOT \nabla_N ( \omega^{n+1} - \omega^n )
  + \nabla_N \cdot \left( \u^n ( \omega^{n+1} - \omega^n) \right) , \omega^{n+1} \right\rangle  .
   \label{est-coll-L2-4}
\end{eqnarray}
The first term disappears, using a similar analysis as (\ref{scheme-coll-1st-5}):
\begin{equation}\begin{aligned}
  &\left\langle  \u^n \DOT \nabla_N \omega^{n+1}
  + \nabla_N \cdot \left( \u^n \omega^{n+1} \right) , \omega^{n+1}  \right\rangle\\
   &\hbox to80pt{} {}= \left\langle  \omega^{n+1} ,  \u^n \DOT \nabla_N \omega^{n+1}  \right\rangle
  - \left\langle \nabla_N \omega^{n+1} , \u^n \omega^{n+1}   \right\rangle
  = 0  .  \label{est-coll-L2-5}
\end{aligned}\end{equation}
For the second term, the summation by parts formula
(\ref{spectral-coll-inner product-3}) can be applied:
\begin{eqnarray}
  \left\langle \u^n \DOT \nabla_N ( \omega^{n+1} - \omega^n) , \omega^{n+1} \right\rangle
  &=&
   - \left\langle \omega^{n+1} - \omega^n ,
   \nabla_N \cdot (\u^n \omega^{n+1}) \right\rangle  ,   \label{est-coll-L2-6-1}
\\
  \left\langle \nabla_N \cdot \left( \u^n ( \omega^{n+1} - \omega^n) \right\rangle ,
  \omega^{n+1} \right\rangle
  &=&
  - \left\langle \omega^{n+1} - \omega^n , \u^n \cdot \nabla_N \omega^{n+1} \right\rangle  ,
   \label{est-coll-L2-6-2}
\end{eqnarray}
For the term $\nabla_N \cdot (\u^n \omega^{n+1})$, we note that it cannot be expanded
as $\u^n \cdot \nabla_N \omega^{n+1}$, as in the Fourier-Galerkin approximation,
even though $\u^n$ is divergence-free at the discrete level (\ref{scheme-coll-1st-4}).
In the collocation space, we have to start from
\begin{equation}
  \nabla_N \cdot (\u^n \omega^{n+1})
  = {\cal D}_{Nx} (u^n \omega^{n+1})  + {\cal D}_{Ny} (v^n \omega^{n+1}).
  \label{est-coll-L2-6-3}
\end{equation}
To obtain an estimate of these nonlinear expansions, we recall that
$\U^n= (U^n, V^n)$, $\pomega^{n+1}$ and $\ppsi^{n+1}$ are the
continuous versions of $\u^n$, $\omega^{n+1}$ and $\psi^{n+1}$,
respectively. Since $\U^n, \pomega^{n+1} \in P^N$, we have
$\U^n \pomega^{n+1} \in P^{2N}$ and an application of Lemma
\ref{spectral-coll-projection-4} indicates that
\begin{eqnarray}
  \left\|  {\cal D}_{Nx} (u^n \omega^{n+1})  \right\|_2
  &=& \left\|  \partial_x {\cal I}_N ( U^n \omega^{n+1})  \right\|_{2}
  \le 2  \left\|  \partial_x ( U^n \pomega^{n+1})  \right\|_{2}  ,  \nonumber
\\
  \left\|  {\cal D}_{Ny} (v^n \omega^{n+1})  \right\|_2
  &=& \left\|  \partial_y {\cal I}_N ( V^n \omega^{n+1})  \right\|_{2}
  \le 2  \left\|  \partial_y ( V^n \pomega^{n+1})  \right\|_{2}   .   \label{est-coll-L2-6-4}
\end{eqnarray}
Subsequently, a detailed expansion in the continuous space and an application of
H\"older's inequality show that
\begin{eqnarray}
   \left\|  \partial_x ( U^n \pomega^{n+1})  \right\|_{2}
   &=&  \left\|  U^n_x \pomega^{n+1} +  U^n \pomega_x^{n+1}  \right\|_{2}
   \le \left\|  U^n_x \pomega^{n+1} \right\|_{2}
   + \left\| U^n \pomega_x^{n+1}  \right\|_{2}   \nonumber
\\
  &\le&
   \left\|  U^n_x  \right\|_{L^{2/(1-\delta)}}  \cdot  \left\| \pomega^{n+1} \right\|_{L^{2/\delta}}
   + \left\| U^n  \right\|_{L^\infty} \cdot  \left\| \pomega_x^{n+1}  \right\|_{2}  .
   \label{est-coll-L2-6-5}
\end{eqnarray}
Furthermore, a 2-D Sobolev embedding gives
\begin{eqnarray}
    \left\|  U^n_x  \right\|_{L^{2/(1-\delta)}} \left\| \pomega^{n+1} \right\|_{L^{2/\delta}}
   \le  C  \left\|  U^n_x  \right\|_{H^\delta}
    \left\| \pomega^{n+1} \right\|_{H^1}
   \le  C  \left\|  \pomega^n  \right\|_{H^\delta}
    \left\| \nabla \pomega^{n+1} \right\|_{2}\,,  \label{est-coll-L2-6-6}
\end{eqnarray}
in which  the elliptic regularity (\ref{est-coll-prelim-1}) and the Poincar\'e inequality were
utilized in the last step. The second part in (\ref{est-coll-L2-6-5}) can be handled in
a straightforward way:
\begin{eqnarray}
   \left\| U^n  \right\|_{L^\infty} \cdot  \left\| \pomega_x^{n+1}  \right\|_{2}
   \le   C \left\| U^n  \right\|_{H^{1+\delta}} \cdot  \left\| \nabla \pomega^{n+1}  \right\|_{2}
   \le   C \left\| \pomega^n  \right\|_{H^\delta} \cdot
   \left\| \nabla \pomega^{n+1}  \right\|_{2}  ,  \label{est-coll-L2-6-7}
\end{eqnarray}
with the the elliptic regularity (\ref{est-coll-prelim-1}) applied again in the second step.
A combination of (\ref{est-coll-L2-6-6}) and  (\ref{est-coll-L2-6-7}) yields
\begin{eqnarray}
  \left\|  \partial_x ( U^n \pomega^{n+1})  \right\|_{2}
  \le  C \left\| \pomega^n  \right\|_{H^\delta} \cdot
   \left\| \nabla \pomega^{n+1}  \right\|_{2}  .  \label{est-coll-L2-6-8}
\end{eqnarray}
Similar estimates can be derived for $\left\|  \partial_y ( V^n \pomega^{n+1})  \right\|_{2}$.
Going back to (\ref{est-coll-L2-6-4}), we arrive at
\begin{eqnarray}
  \left\|  \nabla_N \cdot (\u^n \omega^{n+1})   \right\|_2
  \le  C \left\| \pomega^n  \right\|_{H^\delta} \cdot
   \left\| \nabla \pomega^{n+1}  \right\|_{2}
   =  C \left\| \pomega^n  \right\|_{H^\delta} \cdot
   \left\| \nabla_N \omega^{n+1}  \right\|_2 ,  \label{est-coll-L2-6-9}
\end{eqnarray}
in which the second step is based on the fact that $\pomega^n, \pomega^{n+1} \in P^N$,
so that the corresponding $L^2$ and $H^\delta$ norms are equivalent between
the continuous projection and the discrete version. In addition, the nonlinear term
in (\ref{est-coll-L2-6-2}) can be controlled in a similar way:
\begin{eqnarray}
  \left\| \u^n \cdot \nabla_N \omega^{n+1} \right\|_2
  \le  \left\| \u^n \right\|_{\infty}  \cdot \left\| \nabla_N \omega^{n+1} \right\|_2
  =   C \left\| \pomega^n  \right\|_{H^\delta} \cdot
   \left\| \nabla_N \omega^{n+1}  \right\|_2 ,   \label{est-coll-L2-6-10}
\end{eqnarray}
with a discrete Sobolev imbedding inequality applied in the second step. Therefore, a substitution of
(\ref{est-coll-L2-6-9})--(\ref{est-coll-L2-6-10}) into (\ref{est-coll-L2-4}), (\ref{est-coll-L2-5}),  (\ref{est-coll-L2-6-1})--(\ref{est-coll-L2-6-2}) results in
\begin{align}
   - \dt \bigl\langle \u^n \DOT \nabla_N \omega^n
  &+ \nabla_N \cdot \left( \u^n \omega^n \right) , \omega^{n+1} \bigr\rangle\\
    &\le
    C \dt  \| \pomega^n \|_{H^\delta}  \cdot
    \left\| \omega^{n+1} - \omega^n \right\|_2 \cdot
    \left\|  \nabla_N \omega^{n+1}  \right\|_2   \nonumber
\\
   &\le
    C \tilde{C}_1 \dt
    \left\| \omega^{n+1} - \omega^n \right\|_2 \cdot
    \left\|  \nabla_N \omega^{n+1}  \right\|_2   \nonumber
\\
   &\le
    \frac12 \nu \dt \left\|  \nabla_N \omega^{n+1}  \right\|_2^2
    + \frac{C_3 \tilde{C}_1^2}{\nu} \dt
    \left\| \omega^{n+1} - \omega^n \right\|_2^2 .
       \label{est-coll-L2-7}
\end{align}
Its combination with (\ref{est-coll-L2-2}), (\ref{est-coll-L2-4}), (\ref{est-coll-L2-5})
and (\ref{est-coll-L2-1}) leads to
\begin{equation}
  \|  \omega^{n+1}  \|_2^2  -  \|  \omega^n  \|_2^2
  +  \left( 1 - \frac{C_3 \tilde{C}_1^2}{\nu} \dt  \right) \|  \omega^{n+1}  -  \omega^n  \|_2^2
  +  \nu \dt \|  \nabla_N \omega^{n+1}  \|_2^2
  \le  \frac{2 C_2^2 M^2}{\nu} \dt  .  \label{est-coll-L2-8}
\end{equation}
Under a constraint for the time step
\begin{equation}
   \frac{C_3 \tilde{C}_1^2}{\nu} \dt  \le \frac12 ,  \quad \mbox{i.e.}, \quad
   \dt \le  \frac{\nu}{2 C_3 \tilde{C}_1^2}  ,  \label{constraint-coll-dt-1}
\end{equation}
we arrive at
\begin{equation}\label{est-coll-L2-9}
   \|  \omega^{n+1}  \|_2^2  -  \|  \omega^n  \|_2^2
  +  \frac12 \|  \omega^{n+1}  -  \omega^n  \|_2^2
  +  \nu \dt \|  \nabla_N \omega^{n+1}  \|^2_2
  \le  C_4 \dt
\end{equation}
with $C_4 = ({2 C_2^2 M^2})/{\nu}$.
Furthermore, an application of the Poincar\'e inequality (\ref{est-coll-L2-3}) implies that
\begin{equation}
   \|  \omega^{n+1}  \|_2^2  -  \|  \omega^n  \|_2^2
  +  C_5 \nu \dt \|   \omega^{n+1}  \|_2^2
  \le  C_4 \dt ,  \quad  \mbox{with}  \, \, \,
  C_5 = \frac{1}{C_2^2}.  \label{est-coll-L2-10}
\end{equation}
Applying an induction argument to the above estimate yields
\begin{align}
   &\|  \omega^{n+1}  \|_2^2 \le (1+ C_5 \nu \Delta t)^{-(n+1)} \|  \omega^0  \|_2^2
   + \frac{C_4}{C_5 \nu}\notag\\
   \Rightarrow\quad
   &\|  \omega^{n+1}  \|_2  \le (1+ C_5 \nu \Delta t)^{-({n+1})/{2} }  \|  \omega^0  \|_2
   + \sqrt{\frac{C_4}{C_5 \nu}} := C_6 .   \label{est-coll-L2-11}
\end{align}
Note that $C_6$ is a time dependent value; however, its time dependence is
in exponential decay so that a global in time bound is available.

  In addition, we also have the $L^2 (0, T; H^1)$ bound for the
numerical solution:
\begin{equation}
  \nu \dt \sum_{k=i+1}^{N_k}   \left\|  \nabla_N  \omega^k  \right\|_2^2
  \le   \|  \omega^i  \|_2^2  +  C_4  \, \left( T^*- t^i \right) .  \label{est-coll-L2-12}
\end{equation}

  However, it is observed that the a-priori estimate (\ref{est-coll-L2-11}) is not sufficient
to bound the $H^\delta$ norm (\ref{est-coll-a priori}) of the vorticity field.
In turn, we perform a higher order
energy estimate $L^\infty (0,T;  H^1) \cap L^2 (0, T; H^2)$ for the numerical solution
of the vorticity field.

\subsection{$L^\infty (0, t_1;  H^1) \cap L^2 (0, t_1; H^2)$ estimate for $\omega$}

Taking the inner product of (\ref{scheme-coll-1st-1}) with $- 2 \dt \Delta_N \omega^{n+1}$ gives
\begin{eqnarray}
  & &
  \|  \nabla_N \omega^{n+1}  \|_2^2  -  \|  \nabla_N \omega^n  \|_2^2
  +  \|  \nabla_N \left( \omega^{n+1}  -  \omega^n \right) \|_2^2
  +  2 \nu \dt \|  \Delta_N \omega^{n+1}  \|_2^2    \nonumber
\\
  & &
  =  \dt \left\langle \u^n \DOT \nabla_N \omega^n
  + \nabla_N \cdot \left( \u^n \omega^n \right) , \Delta_N \omega^{n+1} \right\rangle
  - 2 \dt \left\langle  \f^n , \Delta_N \omega^{n+1} \right\rangle  .  \label{est-coll-H1-1}
\end{eqnarray}
The Cauchy inequality can be applied to bound the outer force term:
\begin{align}
  - 2 \left\langle  \f^n , \Delta_N \omega^{n+1} \right\rangle
  &\le \frac12 \nu  \left\| \Delta_N \omega^{n+1} \right\|_2^2
  + \frac{2}{\nu} \left\| \f^n \right\|_2^2\notag\\
  &\le   \frac12 \nu  \left\| \Delta_N \omega^{n+1} \right\|_2^2 + \frac{2 M^2}{\nu}  .
  \label{est-coll-H1-2}
\end{align}
For the nonlinear terms, we first make the following decomposition:
\begin{align}
  &\u^n \DOT \nabla_N \omega^n
  =   - \u^n \DOT \nabla_N \left( \omega^{n+1} - \omega^n \right)
  - \left( \u^{n+1} - \u^n \right) \DOT \nabla_N \omega^{n+1}\notag\\
  &\hbox to155pt{} {}+ \u^{n+1} \DOT \nabla_N \omega^{n+1} , \label{est-coll-H1-3-1}\\
  &\nabla_N \cdot \bigl( \u^n \omega^n \bigr)
   =   \nabla_N \cdot \bigl(  - \u^n ( \omega^{n+1} - \omega^n )
  - ( \u^{n+1} - \u^n )  \omega^{n+1}\notag\\
  &\hbox to155pt{} {}+  \u^{n+1} \omega^{n+1}  \bigr) . \label{est-coll-H1-3-2}
\end{align}
For the first term, the a-priori assumption (\ref{est-coll-a priori}) gives
\begin{align}
   \left\|  - \u^n \DOT \nabla_N \left( \omega^{n+1} - \omega^n \right)  \right\|_2
   &\le  \left\|  \u^n \right\|_{\infty} \cdot
   \left\| \nabla_N \left( \omega^{n+1} - \omega^n \right)  \right\|_2\notag\\
    &\le  C \tilde{C}_1  \left\| \nabla_N \left( \omega^{n+1} - \omega^n \right)  \right\|_2  ,
    \label{est-coll-H1-4}
\end{align}
in which we applied the discrete Sobolev inequality in 2-D:
$\| \u^n \|_{\infty} \le C \| \u^n \|_{H_h^{1+\delta}} \le C \| \omega^n \|_{H_h^\delta}$.
This in turn leads to
\begin{align}
    \dt \bigl\langle - \u^n \DOT \nabla_N &( \omega^{n+1} - \omega^n) ,
    \Delta_N \omega^{n+1} \bigr\rangle  \nonumber
\\
    &\le
    C \tilde{C}_1 \dt   \left\| \nabla_N \left( \omega^{n+1} - \omega^n \right)  \right\|_2
    \cdot \left\|  \Delta_N \omega^{n+1}  \right\|_2   \nonumber
\\
   &\le
    \frac14 \nu \dt \left\|  \Delta_N \omega^{n+1}  \right\|_2^2
    + \frac{C \tilde{C}_1^2}{\nu} \dt
    \left\| \nabla_N \left( \omega^{n+1} - \omega^n \right) \right\|_2^2 .
       \label{est-coll-H1-5}
\end{align}
The conservative nonlinear term $\nabla_N \cdot \left(
\u^n ( \omega^{n+1} - \omega^n )  \right)$ can be analyzed
as in (\ref{est-coll-L2-6-3})--(\ref{est-coll-L2-6-10}):
\begin{eqnarray}
   & & \left\|  \nabla_N \cdot \left(   \u^n ( \omega^{n+1} - \omega^n )  \right)  \right\|_2
   \le
      \left\|  {\cal D}_{Nx}  \left( u^n ( \omega^{n+1} - \omega^n )  \right)  \right\|_2
 +   \left\|  {\cal D}_{Ny}  \left( v^n ( \omega^{n+1} - \omega^n )  \right)  \right\|_2  \nonumber
\\
  & &  \qquad
   \le 2  \left( \left\|  \partial_x  \left( U^n ( \pomega^{n+1} - \pomega^n )  \right)  \right\|_{2}
   + \left\|  \partial_y  \left( V^n ( \pomega^{n+1} - \pomega^n )  \right)  \right\|_{2} \right)  ,
   \label{est-coll-H1-4-2}
\\
  & &
   \left\|  \partial_x \left( U^n ( \pomega^{n+1} - \pomega^n )  \right)  \right\|_{2}
   =  \left\|  U^n_x ( \pomega^{n+1} - \pomega^n )
   +  U^n ( \pomega^{n+1} - \pomega^n )_x  \right\|_{2}    \nonumber
\\
  & &  \qquad
   \le \left\|  U^n_x  \right\|_{L^{2/(1-\delta)}}
   \cdot  \left\| \pomega^{n+1} - \pomega^n \right\|_{L^{2/\delta}}
   + \left\| U^n  \right\|_{L^\infty}
   \cdot  \left\| ( \pomega^{n+1} - \pomega^n )_x \right\|_{2}    \nonumber
\\
  & &  \qquad
   \le C \left\|  \pomega^n  \right\|_{H^\delta}
   \cdot  \left\| \nabla ( \pomega^{n+1} - \pomega^n ) \right\|_{2}
   \le C \tilde{C}_1
   \left\| \nabla_N ( \omega^{n+1} - \omega^n ) \right\|_2 ,
   \label{est-coll-H1-4-3}
\\
  & &
   \left\|  \partial_y \left( V^n ( \pomega^{n+1} - \pomega^n )  \right)  \right\|_{2}
   \le C \tilde{C}_1
   \left\| \nabla_N ( \omega^{n+1} - \omega^n ) \right\|_2 ,
   \label{est-coll-H1-4-4}
\end{eqnarray}
with the help of the elliptic regularity (\ref{est-coll-prelim-1}), Poincar\'e's inequality
and 2-D Sobolev embedding. Consequently, we see that the first part of the nonlinear
term  (\ref{est-coll-H1-3-2}) has the same bound as  (\ref{est-coll-H1-4}):
\begin{eqnarray}
   \left\|  \nabla_N \cdot \left( \u^n ( \omega^{n+1} - \omega^n ) \right)  \right\|_2
    \le  C \tilde{C}_1  \left\| \nabla_N \left( \omega^{n+1} - \omega^n \right)  \right\|_2,
    \label{est-coll-H1-4-5}
\end{eqnarray}
which in turn leads to an estimate  similar to (\ref{est-coll-H1-5}):
\begin{eqnarray}
    & & \dt \left\langle -  \nabla_N  \cdot \left( \u^n ( \omega^{n+1} - \omega^n)  \right) ,
    \Delta_N \omega^{n+1} \right\rangle  \nonumber
\\
    &\le&
    \frac14 \nu \dt \left\|  \Delta_N \omega^{n+1}  \right\|_2^2
    + \frac{C \tilde{C}_1^2}{\nu} \dt
    \left\| \nabla_N \left( \omega^{n+1} - \omega^n \right) \right\|_2^2 .
       \label{est-coll-H1-5-2}
\end{eqnarray}

  For the second term in (\ref{est-coll-H1-3-1}), we start with the following Sobolev inequality:
\begin{eqnarray}
  \left\|   \nabla_N \omega^{n+1}  \right\|_2
  &=& \left\|   \nabla \pomega^{n+1}  \right\|_{2}
  \le \left\|   \pomega^{n+1}  \right\|_{H^1}
  \le C \left\|  \pomega^{n+1}  \right\|^{1/2}_2
   \cdot  \left\|  \pomega^{n+1}  \right\|_{H^2}^{1/2}   \nonumber
\\
  &\le&
  C \left\|  \pomega^{n+1}  \right\|^{1/2}_2
  \cdot  \left\|  \Delta \pomega^{n+1}  \right\|^{1/2}_2
  \le C C_6^{1/2} \left\|  \Delta \pomega^{n+1}  \right\|^{1/2}_2,
  \label{est-coll-H1-6}
\end{eqnarray}
in which an elliptic regularity $ \left\|  \pomega^{n+1}  \right\|_{H^2}
  \le C  \left\|  \Delta \pomega^{n+1}  \right\|_2$  was utilized in the second step
and the leading $L^2$ estimate (\ref{est-coll-L2-11}) was used in the last step.
Similarly, we also observe that the kinematic relationships
\begin{equation}
  \U^{n+1} - \U^n = \nabla^{\bot} \left( \ppsi^{n+1} - \ppsi^n \right) ,  \quad
  \Delta  \left( \ppsi^{n+1} -  \ppsi^n  \right)  =  \pomega^{n+1} - \pomega^n ,
  \label{est-coll-H1-7}
\end{equation}
indicate the following Sobolev estimates:
\begin{eqnarray}
  \left\|  \u^{n+1} - \u^n  \right\|_{\infty}
  &\le& \left\|  \U^{n+1} - \U^n  \right\|_{L^\infty}    \nonumber
\\
  &\le&
  C \left\|  \U^{n+1} - \U^n  \right\|_{H^{1+\delta}}
  \le  C \left\|  \ppsi^{n+1} - \ppsi^n  \right\|_{H^{2+\delta}}
  \le  C \left\|  \pomega^{n+1} - \pomega^n  \right\|_{H^{\delta}}   \nonumber
\\
  &\le&
  C \left\|  \pomega^{n+1} - \pomega^n  \right\|^{1-\delta}_2
  \left\|  \pomega^{n+1} - \pomega^n  \right\|_{H^1}^{\delta}   \nonumber
\\
  &\le&
  C \left\|  \pomega^{n+1} - \pomega^n  \right\|^{1-\delta}_2
  \left\|  \nabla \left( \pomega^{n+1} - \pomega^n \right) \right\|^{\delta}_2  \nonumber
\\
  &\le&
  C \left( 2 C_6 \right)^{1-\delta}
  \left\|  \nabla \left( \pomega^{n+1} - \pomega^n \right) \right\|^{\delta}_2,
  \label{est-coll-H1-8}
\end{eqnarray}
in which estimate (\ref{est-coll-L2-11}) was used in the last step.
Consequently, a combination of (\ref{est-coll-H1-6}) and (\ref{est-coll-H1-8}) indicates that
\begin{align}
  \bigl\| ( \u^{n+1} &- \u^n ) \DOT \nabla_N \omega^{n+1}  \bigr\|_2
  \le
  \left\| \u^{n+1} - \u^n  \right\|_{\infty}
    \cdot \left\|  \nabla_N \omega^{n+1}  \right\|_2    \nonumber
\\
  &\le
  C C_6^{1/2} \left( 2 C_6 \right)^{1-\delta}
  \left\|  \nabla \left( \pomega^{n+1} - \pomega^n \right) \right\|^{\delta}_2
  \cdot  \left\|  \Delta \pomega^{n+1}  \right\|^{1/2}_2    \nonumber
\\
  &\le
  C C_6^{1/2} \left( 2 C_6 \right)^{1-\delta}
  \left\|  \nabla_N \left( \omega^{n+1} - \omega^n \right) \right\|_2^{\delta}
  \cdot  \left\|  \Delta_N \omega^{n+1}  \right\|_2^{1/2} ,
  \label{est-coll-H1-9}
\end{align}
due to the fact that $\pomega \in P^N$.
In turn, the following estimate is obtained
\begin{align}
  &\dt  \left\langle  - \left( \u^{n+1} - \u^n \right) \DOT \nabla_N \omega^{n+1} ,
  \Delta_N \omega^{n+1}  \right\rangle\notag\\
  &\hbox to50pt{} {}\le C C_6^{3/2} \dt  \left\|  \nabla_N \left( \omega^{n+1} - \omega^n \right) \right\|_2^{\delta}
  \cdot  \left\|  \Delta_N \omega^{n+1}  \right\|_2^{3/2} .
   \label{est-coll-H1-10}
\end{align}
Meanwhile, the second conservative nonlinear term in (\ref{est-coll-H1-3-2}),
$\nabla_N \cdot \left(  ( \u^{n+1} - \u^n )  \omega^{n+1}  \right)$, can be expanded
and analyzed in a similar way:
\begin{eqnarray}
   & & \left\|  \nabla_N \cdot \left(   ( \u^{n+1} - \u^n )  \omega^{n+1}  \right)  \right\|_2
   \le
      \left\|  {\cal D}_{Nx}  \left( ( u^{n+1} - u^n ) \omega^{n+1}  \right)  \right\|_2
 +   \left\|  {\cal D}_{Ny}  \left( ( v^{n+1} - v^n ) \omega^{n+1}  \right)  \right\|_2  \nonumber
\\
  & &  \qquad
   \le 2  \left( \left\|  \partial_x  \left( ( U^{n+1} - U^n )  \pomega^{n+1}   \right)  \right\|_{2}
   + \left\|  \partial_y  \left( ( V^{n+1} - V^n )  \pomega^{n+1}   \right)  \right\|_{2} \right)  ,
   \label{est-coll-H1-10-2}
\\
  & &
   \left\|  \partial_x \left( ( U^{n+1} - U^n )  \pomega^{n+1}  \right)  \right\|_{2}
   =  \left\|  ( U^{n+1} - U^n )_x  \pomega^{n+1}
   +  ( U^{n+1} - U^n ) \pomega_x^{n+1}  \right\|_{2}    \nonumber
\\
  & &  \qquad
   \le \left\|  ( U^{n+1} - U^n )_x  \right\|_{L^{2/(1-\delta)}}
   \cdot  \left\| \pomega^{n+1}  \right\|_{L^{2/\delta}}
   + \left\| U^{n+1} - U^n  \right\|_{L^\infty}
   \cdot  \left\| \pomega_x^{n+1}  \right\|_{2}    \nonumber
\\
  & &  \qquad
   \le C \left\|  U^{n+1} - U^n  \right\|_{H^{1+\delta}}
   \cdot  \left\| \nabla \pomega^{n+1}  \right\|_{2}   \nonumber
\\
  & & \qquad
  \le C C_6^{3/2}  \left\|  \nabla_N \left( \omega^{n+1} - \omega^n \right) \right\|_2^{\delta}
  \cdot  \left\|  \Delta_N \omega^{n+1}  \right\|_2^{1/2}  ,
   \label{est-coll-H1-10-3}
\\
  & &
  \left\|  \partial_y \left( ( V^{n+1} - V^n )  \pomega^{n+1}  \right)  \right\|_{2}
  \le C C_6^{3/2}  \left\|  \nabla_N \left( \omega^{n+1} - \omega^n \right) \right\|_2^{\delta}
  \cdot  \left\|  \Delta_N \omega^{n+1}  \right\|_2^{1/2}  .
   \label{est-coll-H1-10-4}
\end{eqnarray}
Again, the elliptic regularity (\ref{est-coll-prelim-1}), Poincar\'e's inequality
and 2-D Sobolev embedding were repeatedly used in the analysis.
As a result, its combination with (\ref{est-coll-H1-10}) leads to
\begin{eqnarray}
   & & \dt  \left\langle  - \left( \u^{n+1} - \u^n \right) \DOT \nabla_N \omega^{n+1}
   -  \nabla_N \cdot \left( ( \u^{n+1} - \u^n ) \omega^{n+1} \right) ,
  \Delta_N \omega^{n+1}  \right\rangle   \nonumber
\\
  &\le&
  C_7 C_6^{3/2} \dt  \left\|  \nabla_N \left( \omega^{n+1} - \omega^n \right) \right\|_2^{\delta}
  \cdot  \left\|  \Delta_N \omega^{n+1}  \right\|_2^{3/2} .
   \label{est-coll-H1-10-5}
\end{eqnarray}
We can always choose $0 < \delta < \frac12$, so that an application of
Young's inequality ($a b \le \frac{a^p}{p} + \frac{b^q}{q}$
with $\frac{1}{p} + \frac{1}{q} =1$) gives
\begin{eqnarray}
  & &
  \left\|  \nabla_N \left( \omega^{n+1} - \omega^n \right) \right\|_2^{\delta}
  \cdot  \left\|  \Delta_N \omega^{n+1}  \right\|_2^{3/2}
  \le  C_8  \left\|  \nabla_N \left( \omega^{n+1} - \omega^n \right) \right\|_2^{4 \delta}
  +  \frac{\nu}{2 C_7 C_6^{3/2}}  \left\|  \Delta_N \omega^{n+1}  \right\|_2^2  , \nonumber
\\
  & &
  \mbox{with}  \quad
  C_8  = \frac14  \left( \frac{3 C_7 C_6^{3/2} }{2 \nu} \right)^3 .
  \label{est-coll-H1-11}
\end{eqnarray}
Furthermore, since $4 \delta < 2$, we can apply Young's inequality to
$\left\| \nabla_N \left( \omega^{n+1} - \omega^n \right) \right\|_2^{4 \delta}$ and obtain
\begin{eqnarray}
  C_8  \left\|  \nabla_N \left( \omega^{n+1} - \omega^n \right) \right\|_2^{4 \delta}
  \le \frac{1}{C_7 C_6^{3/2}}
  \left\|  \nabla_N \left( \omega^{n+1} - \omega^n \right) \right\|_2^2  + C_9 ,
  \label{est-coll-H1-12}
\end{eqnarray}
in which $C_9$ depends on $C_6$, $C_7$, $C_8$ and $\delta$. As a result, substituting
(\ref{est-coll-H1-11})--(\ref{est-coll-H1-12}) into (\ref{est-coll-H1-10}) gives an estimate for the
second nonlinear term:
\begin{align}
  &\dt  \left\langle  - \left( \u^{n+1} - \u^n \right) \DOT \nabla_N \omega^{n+1}
   -  \nabla_N \cdot \left( ( \u^{n+1} - \u^n ) \omega^{n+1} \right) ,
  \Delta_N \omega^{n+1}  \right\rangle    \nonumber\\
  &\qquad\le
   \dt  \left\|  \nabla_N \left( \omega^{n+1} - \omega^n \right) \right\|_2^2
  + \frac12 \nu \dt  \left\|  \Delta \omega^{n+1}  \right\|_2^2 + C_{10} \dt
   \label{est-coll-H1-13}
\end{align}
with $C_{10} = C_7 C_6^{3/2} C_9$

  The third nonlinear term in (\ref{est-coll-H1-3-1}), (\ref{est-coll-H1-3-2})
can be analyzed in a similar way. We first look at
$\u^{n+1} \cdot \nabla_N \omega^{n+1}$.
A bound for $\| \u^{n+1} \|_{\infty}$ can be obtained in the same fashion
as (\ref{est-coll-H1-8}):
\begin{eqnarray}
  \left\|  \u^{n+1} \right\|_{\infty}
  &\le&
  C \left\|  \U^{n+1}  \right\|_{H^{1+\delta}}
  \le  C \left\|  \ppsi^{n+1}  \right\|_{H^{2+\delta}}
  \le  C \left\|  \pomega^{n+1}  \right\|_{H^{\delta}}   \nonumber
  \le  C \left\|  \pomega^{n+1}  \right\|^{1-\frac{\delta}{2}}_2
  \cdot \left\|  \pomega^{n+1} \right\|_{H^2}^{\frac{\delta}{2} }   \nonumber
\\
  &\le&
  C \left\|  \pomega^{n+1} \right\|^{1- \frac{\delta}{2}}_2
  \left\|  \Delta \pomega^{n+1}  \right\|^{\frac{\delta}{2} }_2
  \le C C_6^{1- \frac{\delta}{2} }
  \left\|  \Delta_N \omega^{n+1} \right\|^{\frac{\delta}{2} }_2.
  \label{est-coll-H1-14}
\end{eqnarray}
Its combination with (\ref{est-coll-H1-6}) shows that
\begin{eqnarray}
  \dt \left\langle \u^{n+1} \DOT \nabla_N \omega^{n+1} ,
  \Delta_N \omega^{n+1} \right\rangle
  &\le&
  \dt   \left\|  \u^{n+1} \right\|_{\infty}
    \cdot \left\|   \nabla_N \omega^{n+1}  \right\|_2
    \cdot \left\|   \Delta_N \omega^{n+1}  \right\|_2   \nonumber
\\
  &\le&
  C C_6^{3/2}   \dt
  \left\|  \Delta_N \omega^{n+1} \right\|_2^{\frac{3+\delta}{2} }  .  \label{est-coll-H1-15}
\end{eqnarray}
This analysis can be applied to the term $\nabla_N \cdot ( \u^{n+1} \omega^{n+1} )$
in the same way:
\begin{eqnarray}
   & & \left\|  \nabla_N \cdot \left(   \u^{n+1}  \omega^{n+1}  \right)  \right\|_2
   \le
      \left\|  {\cal D}_{Nx}  \left( u^{n+1} \omega^{n+1}  \right)  \right\|_2
 +   \left\|  {\cal D}_{Ny}  \left( v^{n+1} \omega^{n+1}  \right)  \right\|_2  \nonumber
\\
  & &  \qquad
   \le 2  \left( \left\|  \partial_x  \left( U^{n+1} \pomega^{n+1}   \right)  \right\|_{2}
   + \left\|  \partial_y  \left( V^{n+1}  \pomega^{n+1}   \right)  \right\|_{2} \right)  ,
   \label{est-coll-H1-15-2}
\\
  & &
   \left\|  \partial_x \left( U^{n+1}  \pomega^{n+1}  \right)  \right\|_{2}
   =  \left\|  U_x^{n+1}   \pomega^{n+1}
   +  U^{n+1} \pomega_x^{n+1}  \right\|_{2}    \nonumber
\\
  & &  \qquad
   \le \left\|  U_x^{n+1}   \right\|_{L^{2/(1-\delta)}}
   \cdot  \left\| \pomega^{n+1}  \right\|_{L^{2/\delta}}
   + \left\| U^{n+1}  \right\|_{L^\infty}
   \cdot  \left\| \pomega_x^{n+1}  \right\|_{2}    \nonumber
\\
  & &   \qquad
   \le C \left\|  U^{n+1}  \right\|_{H^{1+\delta}}
   \cdot  \left\| \nabla \pomega^{n+1}  \right\|_{2}
  \le C C_6^{3/2}
  \left\|  \Delta_N \omega^{n+1} \right\|_2^{\frac{1+\delta}{2} }  ,
   \label{est-coll-H1-15-3}
\\
  & &
  \left\|  \partial_y \left( V^{n+1}  \pomega^{n+1}  \right)  \right\|_{2}
  \le C C_6^{3/2}
  \left\|  \Delta_N \omega^{n+1} \right\|_2^{\frac{1+\delta}{2} }  .
   \label{est-coll-H1-15-4}
\end{eqnarray}
As a result, we arrive at the following estimate:
\begin{align}
  &\dt \left\langle \u^{n+1} \DOT \nabla_N \omega^{n+1}
  + \nabla_N \cdot \left(   \u^{n+1}  \omega^{n+1}  \right) ,
  \Delta_N \omega^{n+1} \right\rangle\notag\\
  &\hbox to120pt{} \le   C_{11} C_6^{3/2}   \dt
  \left\|  \Delta_N \omega^{n+1} \right\|_2^{\frac{3+\delta}{2} }  .  \label{est-coll-H1-15-5}
\end{align}
Again, since $\frac{3+\delta}{2} < 2$, we can apply Young's inequality and obtain
\begin{equation}
  \left\|  \Delta_N \omega^{n+1} \right\|_2^{\frac{3+\delta}{2} }
  \le  \frac{\nu}{2 C_{11} C_6^{3/2} } \left\|  \Delta_N \omega^{n+1} \right\|_2^2
   + C_{12} ,  \label{est-coll-H1-16}
\end{equation}
in which $C_{12}$ depends on $C_6$, $C_{11}$ and $\delta$. Going back  to
(\ref{est-coll-H1-15-5}), we have an estimate for the third nonlinear term:
\begin{align}
  &\dt \left\langle \u^{n+1} \DOT \nabla_N \omega^{n+1}
  + \nabla_N \cdot \left(   \u^{n+1}  \omega^{n+1}  \right) ,
  \Delta_N \omega^{n+1} \right\rangle\notag\\
  &\hbox to120pt{}\le   \frac12 \nu \dt  \left\|  \Delta_N \omega^{n+1} \right\|_2^2
   +  C_{13} \dt  ,    \label{est-coll-H1-17}
\end{align}
with $C_{13} = C_{12} C_{11} C_6^{3/2}$.

   Finally, a combination of (\ref{est-coll-H1-1})--(\ref{est-coll-H1-3-2}), (\ref{est-coll-H1-5}),
(\ref{est-coll-H1-5-2}), (\ref{est-coll-H1-13}) and (\ref{est-coll-H1-17}) results in
\begin{eqnarray}
  & &
  \|  \nabla_N \omega^{n+1}  \|_2^2  -  \|  \nabla_N \omega^n  \|_2^2
  +  \left( 1 - \left( 1 + \frac{C_{14} \tilde{C}_1^2}{\nu}  \right) \dt  \right)
  \|  \nabla_N \left( \omega^{n+1}  -  \omega^n \right) \|_2^2   \nonumber
\\
  & &
  +  \frac12 \nu \dt \|  \Delta_N \omega^{n+1}  \|_2^2
  \le \left(  \frac{2 M^2}{\nu} +  C_{10} + C_{13}  \right)  \dt  .  \label{est-coll-H1-18}
\end{eqnarray}
Under a constraint similar to (\ref{constraint-coll-dt-1}) and a trivial constraint
$\dt \le \frac14$ for the time step:
\begin{equation}
   \frac{C_{14} \tilde{C}_1^2}{\nu} \dt  \le \frac12 ,  \, \, \,
   \dt \le \frac14 ,  \quad \mbox{i.e.}, \quad
   \dt \le  \mbox{min}  \left( \frac{\nu}{2 C_{14} \tilde{C}_1^2}  , \frac14 \right) ,
   \label{constraint-coll-dt-2}
\end{equation}
we have
\begin{eqnarray}
   & &
   \|  \nabla_N \omega^{n+1}  \|_2^2  -  \|  \nabla_N \omega^n  \|_2^2
  +  \frac14  \left\|  \nabla_N \left( \omega^{n+1}  -  \omega^n \right)  \right\|_2^2
  +  \frac12 \nu \dt \|  \Delta_N \omega^{n+1}  \|_2^2
  \le  C_{15} \dt ,   \nonumber
\\
  & &
  \mbox{with}  \, \, \,
  C_{15} = \frac{2 M^2}{\nu} +  C_{10} + C_{13} .  \label{est-coll-H1-19}
\end{eqnarray}
Furthermore, an application of elliptic regularity
\begin{equation}
   \| \nabla_N \omega^{n+1} \|_2  \le C_{16}  \| \Delta_N \omega^{n+1} \|_2 ,
   \label{ellip regularity-coll}
\end{equation}
implies that
\begin{equation}
   \|  \nabla_N \omega^{n+1}  \|_2^2  -  \|  \nabla_N \omega^n  \|_2^2
  +  C_{17} \nu \dt \|  \nabla_N \omega^{n+1}  \|_2^2
  \le  C_{15} \dt ,  \quad  \mbox{with}  \, \, \,
  C_{17} = \frac{1}{2 C_{16}^2}.  \label{est-coll-H1-20}
\end{equation}
Applying an induction argument to the above estimate yields
\begin{eqnarray}
   & &
   \|  \nabla_N \omega^{n+1}  \|_2^2
   \le (1+ C_{17} \nu \Delta t)^{-(n+1)} \|  \nabla_N \omega^0  \|_2^2
   + \frac{C_{15} }{C_{17} \nu} ,  \quad \mbox{i.e.},  \, \, \,  \nonumber
\\
  & &
   \|  \nabla_N \omega^{n+1}  \|_2
   \le (1+ C_{17} \nu \Delta t)^{-\frac{n+1}{2} }  \|  \nabla_N \omega^0  \|_2
   + \sqrt{\frac{C_{15} }{C_{17} \nu}} := C_{18} .   \label{est-coll-H1-21}
\end{eqnarray}
Again, $C_{18}$ is a time dependent value; however, its time dependence is
in exponential decay so that a global in time bound is available.

  In addition, we also have the $L^2 (0, T; H^2)$ bound for the
numerical solution:
\begin{equation}
  \frac12 \nu \dt \sum_{k=i+1}^{N_k}   \left\|  \Delta_N  \omega^k  \right\|_2^2
  \le   \|  \nabla_N \omega^i  \|_2^2  +  C_{14}  \, \left( T^* - t^i \right) .  \label{est-coll-H1-22}
\end{equation}

\subsection{Recovery of the a-priori $H^\delta$ assumption (\ref{est-coll-a priori})}

With the $L^\infty (0,T; L^2)$ and $L^\infty (0,T; H^1)$ estimate for the numerical
vorticity solution, namely (\ref{est-coll-L2-11}) and  (\ref{est-coll-H1-21}), we are able to
recover the $H^\delta$ assumption (\ref{est-coll-a priori}):
\begin{eqnarray}
  \left\|  \omega^{n+1} \right\|_{H_h^\delta}
  &=&
  \left\|  \pomega^{n+1}  \right\|_{H^{\delta}}   \nonumber
  \le  C \left\|  \pomega^{n+1}  \right\|^{1-\delta}_2
  \cdot \left\|  \pomega^{n+1} \right\|_{H^1}^{\delta}   \nonumber
\\
  &\le&
  C_{\delta} \left\|  \pomega^{n+1} \right\|^{1- \delta}_2
  \left\|  \nabla \pomega^{n+1}  \right\|^{\delta }_2
  \le C_{\delta} C_6^{1- \delta}  C_{18}^{\delta}.
  \label{a priori-coll-1}
\end{eqnarray}
For simplicity, by taking $\delta=\frac12$, we see that  (\ref{est-coll-a priori}) is also
valid at time step $t^{n+1}$ if we set
\begin{equation}
  \tilde{C}_1 = C_{\delta} \sqrt{ C_6 C_{18} } .  \label{a priori-coll-2}
\end{equation}
Note that $C_6$ and $C_{18}$ are independent of $\tilde{C}_1$ in the derivation.
The constant $\tilde{C}_1$ is only used in the time step constraint
(\ref{constraint-coll-dt-1}). Therefore, an induction can be applied so that the a-priori
$H^\delta$ assumption (\ref{est-coll-a priori}) is valid at any time step under
a global time step constraint
\begin{equation}
   \dt  \le \frac{\nu}{4 C_{\delta}^2 C_6 C_{18} } .  \label{a priori-coll-3}
\end{equation}
Again, note that both $C_6$ and $C_{18}$ contain an exponential decay in time
and therefore are bounded by a given constant in time.

  In other words, under (\ref{a priori-coll-3}), a global in time constant constraint for the
time step, the proposed semi-implicit scheme
(\ref{scheme-coll-1st-1})--(\ref{scheme-coll-1st-3})
is unconditionally stable (in terms of spatial grid size and final time).
In addition, an asymptotic decay for the $L^2$ and $H^1$ norm for the vorticity
(equivalent to $H^1$ and $H^2$ norms for the velocity) can be derived.
Lemma~\ref{collocation} is proven.

\begin{appendix}

\section{A Wente type estimate}\label{s:wente}
     The goal here is to present a Wente type estimate that is applicable to our doubly periodic setting.
  Original estimate of the Jacobian term (essentially $H^{-1}$ norm) goes back to \cite{W1969}.
  Here we need an estimate on the $L^2$ norm of the Jacobian. The case with homogeneous Dirichlet boundary condition can be found in \cite{K2009,KT2000}.

\medskip
\begin{prop}\label{wente}
There exists an absolute constant $C_w\ge1$ such that
\begin{align}
   &\|\sgb\psi\cdot\gb\phi\|_{H^{-1}}^{} \le C_w\,\|\psi\|_{H^1}^{}\|\phi\|_{H^1}^{}
	& &\forall\>\psi\in\Hperz^1(\Omega), \>\phi\in\Hperz^1(\Omega)\label{q:wente1}\\
   &\|\sgb\psi\cdot\gb\phi\|_{2}^{} \le C_w\,\|\psi\|_{H^2}^{}\|\phi\|_{H^1}^{}
	& &\forall\>\psi\in\Hperz^2(\Omega), \> \phi\in\Hperz^1(\Omega)\label{q:wente2}\\
   &\|\sgb\psi\cdot\gb\phi\|_{2}^{} \le C_w\,\|\psi\|_{H^1}^{}\|\phi\|_{H^2}^{}
	& &\forall\psi\in\Hperz^1(\Omega), \> \phi\in\Hperz^2(\Omega).\label{q:wente3}
\end{align}
\end{prop}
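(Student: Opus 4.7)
The plan rests on two features of $J(\psi,\phi):=\sgb\psi\cdot\gb\phi$. First, since $\gb\cdot\sgb\psi=0$, we have the two equivalent divergence forms
\[
  J(\psi,\phi) \;=\; \gb\cdot(\phi\sgb\psi) \;=\; -\gb\cdot(\psi\sgb\phi).
\]
Second, $J(\psi,\phi)=-J(\phi,\psi)$ pointwise, which makes (\ref{q:wente3}) an immediate consequence of (\ref{q:wente2}) by interchanging the two arguments. It therefore suffices to prove (\ref{q:wente1}) and (\ref{q:wente2}).

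For (\ref{q:wente1}), my plan is to adapt the classical Wente lemma to the doubly periodic mean-zero setting. Let $u\in\Hperz^1$ be the mean-zero solution of $-\lap u=J$, so that $\|J\|_{H^{-1}}$ is equivalent to $\|\gb u\|_{L^2}$. The key step is the sharp bound
\[
   \|u\|_{L^\infty} \;\le\; C\,\|\gb\psi\|_{L^2}\|\gb\phi\|_{L^2},
\]
which is unavailable from naive H\"older--Sobolev in 2D since $H^1\not\hookrightarrow L^\infty$. Jacobian cancellation rescues it, either via Wente's original iterative argument (adapting the Dirichlet treatments of \cite{K2009, KT2000}) or, more modernly, via the Coifman--Lions--Meyer--Semmes theorem $\|J\|_{\mathcal{H}^1}\le C\|\gb\psi\|_2\|\gb\phi\|_2$ combined with the Newtonian-potential mapping $\mathcal{H}^1\to L^\infty$ in two dimensions. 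Closing the energy identity $\|\gb u\|_{L^2}^2 = \int uJ \le \|u\|_{L^\infty}\|J\|_{L^1}$ (with $\|J\|_{L^1}\le\|\gb\psi\|_2\|\gb\phi\|_2$ by H\"older) then gives (\ref{q:wente1}).

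For (\ref{q:wente2}), my plan is to combine (\ref{q:wente1}) with the extra derivative of regularity on $\psi$ via a Littlewood--Paley/paraproduct split. Writing $\psi=\psi_{\le N}+\psi_{>N}$ dyadically and using the divergence form $J=-\gb\cdot(\psi\sgb\phi)$, the low-frequency piece $J(\psi_{\le N},\phi)$ is controlled by a direct H\"older bound after Bernstein's inequality supplies $\|\gb\psi_{\le N}\|_{L^\infty}$, while the high-frequency piece $J(\psi_{>N},\phi)$ is controlled through (\ref{q:wente1}) combined with the spectral gain $\|\psi_{>N}\|_{H^1}\lesssim N^{-1}\|\psi\|_{H^2}$. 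Optimising the cutoff $N$ balances the two contributions and yields the desired $L^2$-bound $\|J\|_{L^2}\le C\|\psi\|_{H^2}\|\phi\|_{H^1}$. Inequality (\ref{q:wente3}) then follows from (\ref{q:wente2}) by the pointwise antisymmetry.

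The main obstacle is (\ref{q:wente1}): at the critical 2D Sobolev scaling, naive H\"older--Sobolev estimates fail just barely, and the Jacobian cancellation --- equivalently the Hardy-space refinement of Coifman--Lions--Meyer--Semmes --- is indispensable. Once (\ref{q:wente1}) is in hand, (\ref{q:wente2}) reduces to a paraproduct bookkeeping exercise and (\ref{q:wente3}) is immediate from antisymmetry.
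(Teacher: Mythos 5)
Your overall architecture differs from the paper's: the paper does not prove any of the three inequalities from first principles, but instead multiplies $\psi$ and $\phi$ by a cutoff $\rho$ supported in an enlarged square $\Omt$ and invokes the known Dirichlet-boundary results (inequality (3.8) of \cite{W1969} for \eqref{q:wente1}, and Lemma~1 of \cite{K2009} for \eqref{q:wente2} and \eqref{q:wente3}), the only real work being the verification that $\|\rho\psi\|_{H^k(\Omt)}\lesssim\|\psi\|_{H^k(\Omega)}$. Within your plan, the deduction of \eqref{q:wente3} from \eqref{q:wente2} via the pointwise antisymmetry $\sgb\psi\cdot\gb\phi=-\sgb\phi\cdot\gb\psi$ is correct, and your route to \eqref{q:wente1} (solve $-\Delta u=J$, obtain $\|u\|_{L^\infty}\lesssim\|\gb\psi\|_2\|\gb\phi\|_2$ from the div--curl/Hardy-space structure, close with $\|\gb u\|_2^2=\int uJ\le\|u\|_{L^\infty}\|J\|_{L^1}$) is the classical Wente argument and is sound, modulo the routine transfer of the Coifman--Lions--Meyer--Semmes machinery to the mean-zero periodic setting.

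The genuine gap is in \eqref{q:wente2}. Your high-frequency piece $J(\psi_{>N},\phi)$ is controlled, via \eqref{q:wente1}, only in $H^{-1}$: you get $\|J(\psi_{>N},\phi)\|_{H^{-1}}\lesssim N^{-1}\|\psi\|_{H^2}\|\phi\|_{H^1}$. There is no mechanism to upgrade this to the $L^2$ bound you need: $J(\psi_{>N},\phi)$ is not frequency-localized (the interaction of $\psi_{>N}$ with the high frequencies of $\phi$ produces output at all frequencies, including low ones), so no inverse Bernstein inequality converts $H^{-1}$ control into $L^2$ control, and adding an $L^2$ bound on the low piece to an $H^{-1}$ bound on the high piece does not bound $\|J\|_{L^2}$ for any choice of $N$. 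The high-high interaction is precisely where the difficulty of \eqref{q:wente2} lives: the product map $H^1\times L^2\to L^2$ fails in two dimensions for generic factors, so a genuinely new input --- the $L^2$-level Jacobian (div--curl) estimate, which is the content of Kim's Lemma~1 and of the Kozono--Taniuchi-type bilinear estimates cited by the paper --- is required there and is not supplied by \eqref{q:wente1} plus paraproduct bookkeeping. To repair the argument you would either have to prove that $L^2$ Wente estimate directly (for instance by an $\mathcal{H}^1$--$BMO$ duality argument applied to the high-high paraproduct) or do as the paper does and reduce to the known Dirichlet-boundary result by localization.
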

\begin{proof}
Let $\Omega=(0,2\pi)^2$ as before and $\Omt:=(-2\pi,4\pi)^2$.
Let $\rho\in C_0^\infty(\Real^2)$ be such that $\rho=1$ in $\Omega$,
$\rho=0$ in $\Real^2-\Omt$ and $\rho(x)\in[0,1]$ for all $x\in\Real^2$.
Here $\psi$ and $\phi$ are $2\pi$-periodic functions on $\Real^2$.
The proof of \eqref{q:wente2} is based on Lemma~1 in
\cite{K2009}, which states that, in our notation, for $\rho\psi\in H_0^2(\Omt)$
and $\rho\phi\in H_0^1(\Omt)$, one has
\begin{equation}\label{q:wente4}
   \|\sgb(\rho\psi)\cdot\gb(\rho\phi)\|_{L^2(\Omt)}^{}
	\le C_K(\Omt)\,\|\rho\psi\|_{H^2(\Omt)}^{}\|\rho\phi\|_{H^1(\Omt)}^{}\,.
\end{equation}
Noting that
\begin{equation}\begin{aligned}
   \|\gb(\rho\psi)\|_{\Omt}^{}
	&= \|\gb(\rho\psi)\|_{\Omega}^{} + \|\gb(\rho\psi)\|_{\Omt-\Omega}^{}\\
	&\le \|\gb\psi\|_{\Omega}^{} + \|\rho\gb\psi\|_{\Omt-\Omega}^{} + \|\psi\gb\rho\|_{\Omt-\Omega}^{}\\
	&\le \|\gb\psi\|_{\Omega}^{} + \|\gb\psi\|_{\Omt-\Omega}^{} + \|\psi\|_{\Omt-\Omega}^{}\|\gb\rho\|_{L^\infty(\Omt-\Omega)}^{}\\
	&\le \|\gb\psi\|_{\Omega}^{} + 8\,\|\gb\psi\|_{\Omega}^{} + 8\,c_0^{}\|\gb\psi\|_{\Omt-\Omega}^{}\|\gb\rho\|_{L^\infty(\Omt-\Omega)}^{}\,,
\end{aligned}\end{equation}
and a similar computation for $\|\rho\psi\|_{H^2}^{}$,
the right-hand side of \eqref{q:wente4} is majorised as
\begin{equation}
   \|\rho\psi\|_{H^2(\Omt)}^{}\|\rho\phi\|_{H^1(\Omt)}^{}
	\le (9+8c_0^{}\,\|\gb\rho\|_{L^\infty(\Real^2)}^{})^2\,C_K(\Omt)^2\,\|\psi\|_{H^2(\Omega)}^{}\|\phi\|_{H^1(\Omega)}^{}\,.
\end{equation}
Since the left-hand side of \eqref{q:wente4} majorises
$\|\sgb\psi\cdot\gb\phi\|_{L^2(\Omega)}^{}$, \eqref{q:wente1} follows.
The proof of \eqref{q:wente3} is completely analogous,
using the estimate \cite[Lemma~1]{K2009},
\begin{equation}
   \|\sgb(\rho\psi)\cdot\gb(\rho\phi)\|_{L^2(\Omt)}^{} \le C_K\,\|\rho\psi\|_{H^1(\Omt)}^{}\|\rho\phi\|_{H^2(\Omt)}^{}\,.
\end{equation}
for $\rho\psi\in H_0^1(\Omt)$ and $\rho\phi\in H_0^2(\Omt)$.

For \eqref{q:wente1}, we take $w\in H^1_0 (\Omt)$ and compute
\begin{equation}\begin{aligned}
   \|\sgb\psi\cdot\gb\phi\|_{H^{-1}(\Omt)}^{}
	&= \sup_{\|w\|_{H^1(\Omt)}=1}\,(\sgb\psi\cdot\gb\phi,w)_{L^2(\Omt)}\\
	&\le \sup_{\|w\|_{H^1(\Omt)}=1}\,\|\gb\phi\|_{L^2(\Omt)}\|\gb\psi\|_{L^2(\Omt)}\|\gb w\|_{L^2(\Omt)}\\
	&= \|\gb\phi\|_{L^2(\Omt)}\|\gb\psi\|_{L^2(\Omt)}
\end{aligned}\end{equation}
where the inequality follows from (3.8) in \cite{W1969}.
Arguing as above, \eqref{q:wente1} follows.
\end{proof}

\section{A convergence result on long time behaviors}\label{s:conv}
    Here we present a modified version of the abstract result presented in \cite{W2010}, so that it is applicable to the current situation, where the phase space is only a subset of a Hilbert (or reflexive Banach) space.

\medskip
\begin{prop}\label{abs_conv_stat}
 Let $\{S(t)\}_{t\ge 0}$ be a continuous semi-group on a complete metric space $X$ which is a subset of a separable Hilbert space $H$ with the inherited distance $($norm$)$ $\|\cdot\|$.
Suppose that the semi-group generates a continuous dissipative dynamical system $($in the
sense of possessing a compact global attractor $\mathcal{A}${}$)$  on
 $X$. Let $\{S_k\}_{0<k\le k_0}$ be a family of continuous maps on
$X$ which generates a family of discrete dissipative dynamical
system $($with global attractor $\mathcal{A}_k${}$)$ on $X$. We further assume
 that the following two conditions
are satisfied.

\smallskip
\begin{itemize}
\item[H1:]$[$Uniform boundedness$]$ There exists a $k_1\in (0,k_0]$
such that $\{S_k\}_{0<k\le k_1}$ is uniformly bounded in the sense
that
\begin{equation}\label{u-bdd}
    K = \bigcup_{0<k\le k_1} \Attr_k
\end{equation}
is bounded in $X$.
\item[H2:]$[$Finite time uniform convergence$]$ $S_k$ uniformly
converges to $S$ on any finite time interval $($modulo any initial
layer$)$ and uniformly for initial data from the global attractor of
the scheme in the sense that there exists $t_0>0$ such that for any $T^*>t_0>0$
\begin{equation}
    \lim_{k\rightarrow 0}\sup_{\bu\in\Attr_k, nk\in [t_0, T^*]}\|S^n_k\bu -
S(nk)\bu\| = 0.
    \label{u-conv-long}
\end{equation}
\end{itemize}
   Then the global attractors converge in the sense of Hausdorff semi-distance, i.e.
\begin{equation}
  \lim_{k\rightarrow 0} \mbox{dist}_H(\mathcal{A}_k, \mathcal{A})=0.
\end{equation}
Moreover, if  the following three more stringent conditions are satisfied:

\smallskip
\begin{itemize}
\item[H3:]$[$Uniform dissipativity$]$ There exists a $k_1 \in (0,
k_0)$ such that $\{S_k\}_{0<k\le k_1}$ is uniformly dissipative in
the sense that
\begin{equation}
    K = \bigcup_{0<k\le k_1} \mathcal{A}_k \label{u-dissip}
\end{equation}
    is pre-compact in $X$.
\item[H4:]$[$Uniform convergence on the unit time interval$\,]$ $S_k$
uniformly converges to $S$ on the unit time interval $($modulo an
initial layer$)$ and uniformly for initial data from the global
attractor of $S_k$ in the sense that for any $t_0\in (0, 1)$
\begin{equation}
    \lim_{k\rightarrow 0}\sup_{\bu\in\mathcal{A}_k, nk\in [t_0, 1]}\|S^n_k\bu -
S(nk)\bu\| = 0.
    \label{u-conv}
\end{equation}
\item[H5:]$[$Uniform continuity of the continuous system$]$ $\{S(t)\}_{t\ge 0}$ is uniformly continuous on $K$  on the unit time
interval in the sense that for any $T^*\in [0,1]$
\begin{equation}
  \lim_{t\rightarrow T^*} \sup_{\bu\in K} \|S(t)\bu-S(T^*)\bu\| = 0,
  \label{u-cont}
\end{equation}
\end{itemize}
    then the invariant measures of the discrete dynamical system $\{S_k\}_{0<k\le k_0}$
converge to invariant measures of the continuous dynamical system
$S$. More precisely, let $\mu_k\in \mathcal{IM}_k$ where
$\mathcal{IM}_k$ denotes the set of all invariant measures of
$S_k$. There must exist a subsequence, still denoted $\{\mu_k\}$,
and $\mu\in \mathcal{IM}$ $($an invariant measure of $S(t)${}$)$, such
that $\mu_k$ weakly converges to $\mu$, i.e.,
\begin{equation}
    \mu_k \rightharpoonup \mu, \ \mbox{as}\ k\rightarrow 0.
\end{equation}
\end{prop}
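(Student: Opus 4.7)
My plan is to prove the two conclusions in sequence; each follows from a compactness-plus-convergence argument pairing invariance (of $\Attr_k$ or $\mu_k$) with one of the hypotheses.

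For the Hausdorff semi-distance I fix $\epsilon>0$. By H1, $K$ is bounded, and since $\Attr$ attracts bounded sets under $\{S(t)\}$ there is $T_0>0$ with $\mathrm{dist}(S(t)K,\Attr)<\epsilon/2$ for all $t\ge T_0$. Applying H2 on the window $[t_0,T^\star]=[T_0,T_0+1]$ yields $k^\star>0$ with
\begin{equation*}
   \sup_{u\in\Attr_k,\,nk\in[T_0,T_0+1]}\|S_k^n u-S(nk)u\|<\epsilon/2 \quad\forall\,k<k^\star.
\end{equation*}
For any $u_k\in\Attr_k$ with $k<\min(k^\star,1)$, the invariance of $\Attr_k$ under $S_k$ produces some $v_k\in\Attr_k$ and an integer $n$ with $nk\in[T_0,T_0+1]$ and $S_k^n v_k=u_k$, so
\begin{equation*}
   \mathrm{dist}(u_k,\Attr)\le\|S_k^n v_k-S(nk)v_k\|+\mathrm{dist}(S(nk)v_k,\Attr)<\epsilon,
\end{equation*}
which proves the first conclusion as $\epsilon>0$ was arbitrary.

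For the invariant-measure conclusion H3 upgrades $K$ to pre-compactness, so each $\mu_k$ (supported in $\Attr_k\subset K$) lives inside the compact set $\overline{K}$; the family $\{\mu_k\}$ is therefore tight, and Prokhorov extracts a subsequence with $\mu_k\rightharpoonup\mu$ for a Borel probability $\mu$ on $\overline{K}$. To show $S(t)_\ast\mu=\mu$ it suffices to handle $t\in(0,1]$: the semigroup relation $S(t)_\ast=S(t-\lfloor t\rfloor)_\ast S(1)_\ast^{\lfloor t\rfloor}$ then propagates invariance to all $t\ge 0$. Fix such $t$ and $\phi\in C_b(X)$, and pick $n_k$ with $n_k k\in[t,t+k]$. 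Using the discrete invariance $\int\phi\circ S_k^{n_k}\,d\mu_k=\int\phi\,d\mu_k$, split
\begin{equation*}
\begin{aligned}
   \int\phi\circ S(t)\,d\mu-\int\phi\,d\mu
	={}&\int\phi\circ S(t)\,d(\mu-\mu_k)
	+ \int\bigl[\phi\circ S(t)-\phi\circ S(n_k k)\bigr]d\mu_k\\
	&{}+ \int\bigl[\phi\circ S(n_k k)-\phi\circ S_k^{n_k}\bigr]d\mu_k
	+ \int\phi\,d(\mu_k-\mu).
\end{aligned}
\end{equation*}
The first and last terms vanish by weak convergence (their integrands are bounded continuous on $\overline{K}$); the second by H5 at $T^\star=t$ combined with uniform continuity of $\phi$ on $\overline{K}$; and the third by H2 (or H4) together with the same uniform continuity, noting that $\mathrm{supp}\,\mu_k\subset\Attr_k$ is precisely the set on which H2/H4 gives uniform control.

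The main technical obstacle is the bookkeeping around time intervals rather than any single analytic estimate. H4 and H5 are stated only on $[0,1]$, which forces me to verify invariance directly on $(0,1]$ and to push the conclusion to $t>1$ via the semigroup identity; the applicability of H2/H4 to the third term above depends on the fact that invariant measures of dissipative discrete systems are supported on the attractor, so the relevant integrands sit exactly where the hypothesis applies; and the choice of $T_0$ in Part~1 must precede the application of H2 so that the quantifier order (first $\epsilon$, then $T_0$, then $k^\star$) remains consistent. Everything else---Prokhorov tightness, triangle inequalities, and uniform continuity of $\phi$ on compact sets---is standard.
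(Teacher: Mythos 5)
Your proof is correct and follows essentially the same route as the argument to which the paper defers (its ``proof'' is only a citation of \cite{W2010,W2009}): invariance of $\Attr_k$ plus H1--H2 and the attraction property of $\Attr$ for the Hausdorff semi-distance, and tightness/Prokhorov plus the four-term telescoping decomposition (using discrete invariance of $\mu_k$, H2/H4, and H5) for the invariant measures. The only imprecision is in terms two and three of your decomposition, where you invoke ``uniform continuity of $\phi$ on $\overline{K}$'' although the relevant points $S(t)u$, $S(n_kk)u$, $S_k^{n_k}u$ lie outside $\overline{K}$; the fix is standard --- these points all lie in a totally bounded (hence precompact) neighborhood of the compact set $S(t)\overline{K}$, on whose closure $\phi$ is uniformly continuous.
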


\begin{proof}
The proof is exactly the same as those in \cite{W2010, W2009}. We leave the detail to the interested reader.
\end{proof}

\end{appendix}

\section*{Acknowledgement}
This work is supported in part by grants from the National Science
Foundation (DMS1008852 for XW, DCNS0959382 for SG and CW), AFSOR (FA-9550-09-0208 for SG, 10418149 for SG and CW),  a Modern Applied Mathematics 111 project at Fudan University from the Chinese MOE (for XW), and a COFRS fund from FSU (for XW).


\begin{thebibliography}{99}

\bibitem{BM1992} C. Bernardi and  Y. Maday (1992),
Approximations spectrales de probl{\`e}mes aux limites elliptiques,
Springer, Paris.

\bibitem{CHQZ1988} C. Canuto, M.Y. Hussaini, A. Quarteroni and T.A. Zang (1988),
Spectral methods in fluid dynamics,
New York, Springer-Verlag.

\bibitem{CQ1982} C. Canuto and A. Quarteroni (1982),
Approximation results for orthogonal polynomials in Sobolev spaces,
Math. Comp., 38, pp.~67--86.

\bibitem{CW2008}  W. Cheng and X. Wang (2008),
A semi-implicit scheme for stationary statistical properties of the infinite Prandtl number model,
SIAM J. Numer. Anal., 47, pp.~250--270.

\bibitem{CF1988} P. Constantin and C. Foias (1988),
Navier--Stokes equations,
The University of Chicago Press.

\bibitem{DGS2001} Q. Du, B. Guo and J. Shen (2001),
Fourier spectral approximation to a dissipative system modeling
the flow of liquid crystals,
SIAM J. Numer. Anal., 39, pp.~735--762.

\bibitem{WE1992} W. E (1992),
Convergence of spectral methods for the Burgers' equation,
SIAM J. Numer. Anal., 29, pp.~1520--1541.

\bibitem{WE1993} W. E (1993),
Convergence of Fourier methods for Navier-Stokes equations,
SIAM J. Numer. Anal., 30, pp.~650--674.

\bibitem{FJKT1991} C. Foias, M. Jolly, I. Kevrekidis and E. Titi (1991),
Dissipativity of numerical schemes,
Nonlinearity, 4, pp.~591--613.

\bibitem{FJKT1994} C. Foias, M. Jolly, I. Kevrekidis and E. Titi (1994),
On some dissipative fully discrete nonlinear {G}alerkin schemes for the {K}uramoto-{S}ivashinsky equation,
Phys. Lett. A, 186, pp.~87--96.

\bibitem{FMRT2001} C. Foias, O. Manley, R. Rosa and R. Temam (2001),
Navier--Stokes equations and turbulence,
Encyclopedia of Mathematics and its Applications~83,
Cambridge University Press, Cambridge.

\bibitem{FT1989} C. Foias and R. Temam (1989),
Gevrey class regularity for the solutions of the Navier--Stokes equations,
J. Funct. Anal., 97, pp.~359--369.

\bibitem{F1995} U. Frisch (1995),
Turbulence: the legacy of A.N. Kolmogorov,
Cambridge University Press.

\bibitem{GO1977} D. Gottlieb and S.A. Orszag (1977),
Numerical Analysis of Spectral Methods, Theory and Applications,
SIAM, Philadelphia, PA.

\bibitem{GW11a} S. Gottlieb and C. Wang (2011),
Stability and convergence analysis of  fully discrete Fourier collocation
spectral method for 3-D viscous Burgers' equation,
Math. Comp., submitted.

\bibitem{G1991a} P.M. Gresho (1991),
Incompressible fluid dynamics: some fundamental formulation issues,
Annu. Rev. Fluid Mech., 23, pp.~413--453.

\bibitem{G1991b} P.M. Gresho (1991),
Some current CFD issues relevant to the incompresible Navier--Stokes equations,
Comput. Methods Appl. Mech. Engrg., 87, pp.~201--252.

\bibitem{G1992} P.M. Gresho (1992),
Some interesting issues in incompresible fluid dynamics, both in continuum and in numerical simulation,
Adv. Appl. Mechanics, 28, pp.~45--140.

\bibitem{GS1998} P.M. Gresho and R.L. Sani (1998),
Incompressible Flow and the Finite Element Method, I: Advection-Diffusion and Isothermal Laminar Flow,
Wiley, New York.

\bibitem{GZ2003} B.Y. Guo and J. Zou (2003),
Fourier spectral projection method and nonlinear convergence analysis for Navier--Stokes equations,
J. Math. Anal. Appl., 282, pp.~766--791.

\bibitem{HS2000} A.T. Hill and E. S{\"u}li (2000),
Approximation of the global attractor for the incompressible Navier-Stokes equations,
IMA J. Numer. Anal., 20, pp.~663--667.

\bibitem{J2002} N. Ju (2002),
On the global stability of a temporal discretization scheme for the Navier--Stokes equations,
IMA J. Numer. Anal., 22, pp.~577--597.

\bibitem{K2009} N. Kim (2009),
Large friction limit and the inviscid limit of 2D Navier--Stokes equations under Navier friction condition,
SIAM J. Math. Anal., 41, pp.~1653--1663.

\bibitem{KT2000}
H. Kozono and Y. Taniuchi (2000),
Bilinear estimates in BMO and the Navier--Stokes equations,
Math. Z., 235, pp.~173--194.

\bibitem{L2002} P.D. Lax (2002),
Functional Analysis,
Wiley, New York.

\bibitem{MQ1982b} Y. Maday and A. Quarteroni (1982),
Spectral and pseudospectral approximation to Navier-Stokes equations,
SIAM J. Numer. Anal., 19, pp.~761--780.

\bibitem{MW2006} A.J. Majda and X. Wang (2006),
Nonlinear dynamics and statistical theory for basic geophysical flows,
Cambridge University Press, Cambridge.

\bibitem{MT1998} M. Marion and R.M. Temam (1998),
Navier--Stokes equations: Theory and approximation,
in Handbook of Numerical Analysis, vol.~VI,
P.G. Ciarlet and J.-L. Lions, eds., Elsevier,
pp. 96--102.

\bibitem{MY1975}  A.S. Monin and A.M. Yaglom (1975),
Statistical fluid mechanics;  mechanics of turbulence,
English ed. updated, augmented and revised by the authors,
MIT Press, Cambridge, MA.

\bibitem{Peyret2002} R. Peyret (2002),
Spectral methods for incompressible viscous flow,
New York, Springer.

\bibitem{S1989} J. Shen (1989),
Convergence of approximate attractors for a fully discrete system
for reaction-diffusion equations,
Numer. Funct. Anal. and Optimiz., 10, pp.~1213--1234.

\bibitem{S1990} J. Shen (1990),
Long time stabilities and convergences for the fully discrete nonlinear Galerkin methods,
Appl. Anal., 38, pp.~201--229.

\bibitem{SH1996} A.M. Stuart and A.R. Humphries (1996),
Dynamical Systems and Numerical Analysis,
Cambridge University Press, Cambridge.

\bibitem{TM1996} T. Tachim-Medjo (1996),
Navier--Stokes equations in the vorticity-velocity formulation: the two-dimensional case,
Appl. Numer. Math., 21, pp.~185--206.

\bibitem{ET1989} E. Tadmor (1989),
Convergence of spectral methods to nonlinear conservation laws,
SIAM J. Numer. Anal., 26, pp.~30--44.

\bibitem{T1966} R.M. Temam (1966),
Sur l'approximation des solutions des \'equations de Navier--Stokes,
C.R. Acad. Sci. Paris, Serie A, 262, pp.~219--221.

\bibitem{T1983} R.M. Temam (1983),
Navier--Stokes Equations and Nonlinear Functional Analysis,
CBMS-NSF Regional Conference Series in Applied Mathematics,
SIAM, Philadelphia, PA.

\bibitem{T1997} R.M. Temam (1997),
Infinite dimensional dynamical systems in mechanics and physics, 2nd ed,
Springer-Verlag, New York.

\bibitem{T2009} F. Tone (2009),
On the long-time $H^2$ stability of the implicit Euler scheme for the 2D magnetohydrodynamics equations,
J. Sci. Comput., 38, pp.~331--348.

\bibitem{TW2006} F. Tone and D. Wirosoetisno (2006),
On the long-time stability of the implicit Euler scheme for the two-dimensional Navier--Stokes equations,
SIAM J. Numer. Anal., 44, pp.~29--40.

\bibitem{T1988} D.J. Tritton (1988),
Physical Fluid Dynamics,
Oxford Science Publishing.

\bibitem{VF1988}
M.I. Vishik and A.V. Fursikov (1988),
Mathematical problems of statistical hydromechanics,
Kluwer, Dordrecht/Boston/London.

\bibitem{W2009} X. Wang (2009),
Upper semi-continuity of stationary statistical properties of dissipative systems
(Dedicated to Prof. Li Ta-Tsien on the occasion of his 70th birthday),
Discrete Contin. Dynamical Systems~A, 23, pp.~521--540,
doi:10.3934/dcds.2009.23.521.

\bibitem{W2010} X. Wang (2010),
Approximation of stationary statistical properties of dissipative dynamical systems: time discretization,
Math. Comp., 79, pp.~259--280.

\bibitem{W1969} H.C. Wente (1969),
An existence theorem for surfaces of constant mean curvature,
J. Math. Anal. Appl., 26, pp.~318--344.

\end{thebibliography}
\end{document}